\definecolor{dark-red}{rgb}{0.4,0.15,0.15}
\definecolor{dark-blue}{rgb}{0.15,0.15,0.4}
\definecolor{medium-blue}{rgb}{0,0,0.5}
\def\norm#1{\|#1\|}
\def\R{\mathbb{R}} 
\def\T{\mathbb{T}} 
\def\P{\mathbb{P}}
\let\norm\relax
\newcommand{\Norm}[1]{\left\lVert #1 \right\rVert}
\newcommand{\norm}[1]{\left\lvert #1 \right\rvert}
\title{A multiscale analysis of diffusions on rapidly varying surfaces}
\author{A. B. Duncan \and C. M. Elliott \and G. A. Pavliotis \and A. M. Stuart}
\institute{A. B. Duncan
\at Mathematics Institute, University of Oxford, Woodstock Road, Oxford, OX2 6GC,  UK, \\\email{Andrew.Duncan@maths.ox.ac.uk.}
\and
C.M. Elliot 
\at Mathematics Institute, Warwick University, Coventry, CV4 7AL, UK, \\\email{C.M.Elliott@warwick.ac.uk.}
\and
G.A. Pavliotis 
\at Department of Mathematics, Imperial College London, London, SW7 2AZ, UK, \\\email{g.pavliotis@imperial.ac.uk.}
\and
A.M. Stuart 
\at  Mathematics Institute, Warwick University, Coventry, CV4 7AL, UK, \\\email{A.M.Stuart@warwick.ac.uk.}
}
\begin{document}

\maketitle

\begin{abstract}
Lateral diffusion of molecules on surfaces plays a very important role in various biological processes, including lipid transport across the cell membrane, synaptic transmission and other phenomena such as exo- and endocytosis, signal transduction, chemotaxis and cell growth. In many cases, the surfaces can possess
spatial inhomogeneities and/or be rapidly changing shape. Using a generalisation of the model for a thermally excited Helfrich elastic membrane, we consider the problem of lateral diffusion on quasi-planar surfaces, possessing both spatial and temporal fluctuations. Using results from homogenisation theory, we show that, under the assumption of scale separation between the characteristic length and time scales of the membrane fluctuations and the characteristic scale of the diffusing particle, the lateral diffusion process can be well approximated by a Brownian motion on the plane with constant diffusion tensor $D$ which depends in a highly nonlinear way on the detailed properties of the surface.  The effective diffusion tensor will depend on the relative scales of the spatial and temporal fluctuations and, for different scaling regimes, we prove the existence of a macroscopic limit in each case.
\end{abstract}

\keywords{homogenisation, Laplace-Beltrami, lateral diffusion, multiscale analysis, Helfrich elastic membrane, effective diffusion tensor.}

%\begin{AMS}
%\end{AMS}

%\pagestyle{myheadings}
%\thispagestyle{plain}
%\markboth{A. B. DUNCAN, C. M. ELLIOTT, G. A. PAVLIOTIS, A.M. STUART}{A multi-scale analysis of diffusions on rapidly-varying surfaces}

\section{Introduction}
Diffusion processes are ubiquitous in physics, chemistry and biology  \cite{crank1979mathematics, berg1993random, van1992stochastic}.   In biology, diffusion plays a fundamental role in many processes occurring at the cellular and sub-cellular level and is one of the basic mechanisms for intracellular transport \cite{bressloff2013stochastic}.  Diffusion not only occurs within the cell,  but can also occur along the cell membrane.  This lateral diffusion of molecules along the surface of cells also plays a key role in various cellular processes.  The lipid molecules and integral membrane proteins which constitute the cell membrane themselves undergo diffusion along the membrane as a result of thermal agitation \cite{almeida1995lateral}.    Lateral diffusion of postsynaptic membrane proteins between synapses is known to play a fundamental part in synaptic transmission \cite{borgdorff2002regulation, ashby2006lateral}. Other phenomena in cellular biology in which diffusion over interfaces is involved include vision \cite{poo1974lateral}, exo- and endocytosis, signal transduction, chemotaxis and cell growth (see \cite{sbalzarini2006simulations,almeida1995lateral}).
\\\\
Experimental techniques such as single particle tracking \cite{saxton1997single},  fluorescence recovery after photobleaching (FRAP)  \cite{axelrod1976mobility} and nuclear magnetic resonance (NMR) \cite{lindblom1994nmr} have made it possible to accurately measure displacement in a laboratory fixed plane of molecules diffusing laterally on the surface,  and thus to measure the macroscopic diffusion tensor $D$ of the diffusion process, projected into the plane.
\\\\
Biological interfaces, however, are not typically flat.  Indeed, many membranes will  exhibit a nonzero curvature which is induced by the natural spontaneous curvature of the constituent lipids \cite{seifert1997configurations}.  They may also be rough, i.e. possess spatial microstructure.   Moreover,  the shape of the membrane is  changing in time due to thermal fluctuations and possibly also non-thermal fluctuations induced by active membrane proteins on the surface \cite{gov2004membrane}. 
\\\\
The geometry of the membrane will cause the macroscopic diffusion tensor $D$ to be significantly different from the molecular diffusion tensor $D_0$ of the diffusing protein on the surface itself.  The relationship between the molecular diffusion tensor and the macroscopic diffusion tensor has been widely studied for different types of biomembrane.  Previous work such as \cite{gustafsson1997diffusion,naji2007diffusion,halle1997diffusion, sbalzarini2006simulations}  focus on the problem of lateral diffusion of a particle on a static membrane.  Various estimates for $D$ in terms of the surface fluctuation were derived, most notably the effective medium approximation and area scaling approximation \cite{king2004apparent,gustafsson1997diffusion,gov2006diffusion, naji2007diffusion}.  Other studies such as \cite{reister2007lateral, reister2007hybrid,reister2010diffusing} have focussed on the problem of diffusion on a thermally excited biomembrane fluctuating in a hydrodynamic medium and derived expressions for the effective diffusion tensor as a function of surface parameters such as bending rigidity, surface tension and fluid viscosity. 
\\\\
The common factor in these models is the presence of small length and time scales in the resulting evolution equations, which enter due to  spatial surface microstructure, or due to rapid temporal fluctuations of the surface or possibly both.  The objective of this paper is to investigate the macroscopic behaviour of a laterally diffusive process on surfaces possessing microscopic space and time scales using a single, unified mathematical approach. By doing so we provide rigorous justification for some existing approximations advocated in the literature,  clearly explaining the parametric regimes in which they apply and develop a systematic methodology which can be used to study other similar problems.  Under the assumption that the slow and fast scales are well-separated it is possible to show that the diffusion process can be approximated by a Brownian motion on the plane, independent of the small scale, but which accounts for the macroscopic effects of the fine spatial structure and rapid fluctuations. We use the classical methods of averaging and homogenisation \cite{bensoussan1978asymptotic,pavliotis2008multiscale}, and derive expressions for the coefficients of the macroscopic process in terms of averages with respect to a relevant measure reflecting the rapid fluctuations and involving the solution of an auxiliary  {\em cell problem}  in the case of homogenisation. Although these coefficients will not have a closed form in general, they can be computed numerically, accurately and efficiently without having to simulate effects at the microscopic level and they are amenable to analysis in various parameter regimes of interest.  In particular, we can obtain bounds on the coefficients in terms of the properties of the surface.
\\\\
The use of multiscale methods to study lateral diffusion on membranes has been considered before, with varying degrees of rigour.   In \cite{gustafsson1997diffusion}, the authors derive the correct macroscopic diffusion tensor for a particle diffusing on a surface with periodic spatial fluctuations basing their result on \cite{jackson1963effective,lifson1962self} and \cite{festa1978diffusion}, who consider the analogous situation of diffusion in a periodic potential.   Under the assumption of symmetry in the spatial fluctuations the authors then proceed to derive variational bounds for the effective diffusion and provide heuristic arguments for a number of other, tighter approximations.  In \cite{naji2007diffusion}, the authors study lateral diffusion on a Helfrich membrane undergoing thermal fluctuations.  They identify two limiting regimes: the diffusive limit (homogenization) of a diffusion on a quenched surface and the annealed limit (averaging) of diffusion on a rapidly fluctuating membrane, based on a formal analysis of the Fokker-Planck equation describing the evolution of the system, using an adiabatic elimination of the fast variable as in \cite[Section 8.3]{risken1996fokker}. They then use numerical methods to study the dynamics of the intermediate regimes where there is no separation of scales.   
\\\\
However, to our knowledge, there are no studies which adopt a rigorous multiscale approach to solving this problem, nor are we aware of any work which unifies the study of lateral diffusion on surfaces with both rapid spatial and temporal fluctuations in a single framework. Moreover, we are not aware of any study which makes use of multiscale methods to compute the effective diffusion tensor directly rather than via numerical simulation of the multiscale process, with the exception of \cite{abdulle2006heterogeneous} in which the authors  describe an HMM (heterogenous multiscale method) scheme for computing the solution of an elliptic partial differential equation (PDE) on a static surface possessing fine locally-periodic undulations and rigorously prove convergence of the scheme.
\\\\
In this paper, we consider two simple models for diffusion on a fluctuating surface.  The first model describes lateral diffusion on a static surface possessing rapid, periodic fluctuations.  Lateral diffusion on quasi-planar periodic surfaces have been previously studied in the literature, mainly in the context of biological interfaces.  The first such work we are aware of is \cite{aizenbud1985diffusion} where the authors consider the problem of diffusion on a curve possessing rapid periodic fluctuations with the objective of explaining the slowing down of diffusion of \emph{succiny-concanavalin A} receptors on the surfaces of adherent mouse fibroblast.  The authors derive the effective diffusion tensor, in this case, given by $D = \frac{1}{Z^2}$, where $Z$ is the average excess surface area of the curved surface relative to its projection on the plane.  In \cite{halle1997diffusion},  the authors study the same problem in two-dimensions,  and by recognizing the problem as diffusion in a periodic potential they use standard results to obtain the homogenised  diffusion tensor $D$ in terms of the solution of an auxiliary PDE (i.e. the cell problem).  Under some implicit symmetry assumptions on the surface, they then derive variational bounds for $D$.   The authors discuss various non-variational bounds for the $D$, and propose two estimates: the effective medium approximation, given by $$D_{ema} = \left\langle \sqrt{\norm{g}}({z})\right\rangle^{-1} \mathbf{I},$$ where $\sqrt{g}$ is the infinitesimal surface element of the surface and $\langle \cdot \rangle$ denotes the average with respect to the surface measure and the area scaling approximation given by \begin{equation}D_{as} = \frac{1}{Z} \mathbf{I}.\end{equation} The authors claim that $D_{ema}$ is the better approximation (which is at odds with the conclusion of Proposition \ref{prop:ema}).   In this paper we show that in the high-frequency, low amplitude limit of the surface fluctuations, the diffusion process behaves like a pure diffusion process (i.e. a Brownian motion) on $\R^d$ with constant macroscopic diffusion coefficient $D$.  Moreover, in two dimensions, we prove that, provided $D$ is isotropic,  then $D$ is equal to $D_{as}$.
\\\\
The second model we consider is a generalisation of the thermally excited Helfrich elastic membrane model \cite{gov2006diffusion,naji2007diffusion,reister2007lateral}.  The surface is defined by a time dependent random field undergoing rapid spatial and temporal fluctuations.  The macroscopic behavior of laterally diffusing particles on the surface will depend on the relative speed between the spatial and temporal fluctuations.  We identify a number of natural distinguished limits for this problem and study the effective properties of the corresponding limit processes and in particular providing a rigorous justification of the effective diffusion estimates derived in \cite{naji2007diffusion,reister2007lateral,gustafsson1997diffusion} for lateral diffusion on rapidly fluctuating surfaces. 
\\\\
In Section \ref{sec:background} we describe the formulation of lateral diffusion and Brownian motion on a time dependent, quasi-planar surface.  In Section \ref{sec:model} we introduce the framework for describing diffusion on a fluctuating surface, applicable to both models. Moreover, we identify four different scaling regimes for the second model.  In Section \ref{sec:helfrich} we describe a motivating example, namely that of a quasi-planar membrane with Helfrich elastic free energy and show that for small deformations the dynamics can be described by an Ornstein-Uhlenbeck process for the Fourier modes of the surface \cite{granek1997semi,naji2007diffusion}.    In Section \ref{sec:static} we focus on the first model and  using classical periodic homogenisation methods, we show that in the macroscopic limit, the projected lateral diffusion process converges to a Brownian motion on the plane with a constant diffusion tensor $D$, which can be expressed in terms of the solution to an auxiliary Poisson equation.  We study the properties of $D$ and provide  rigorous justification for a number of existing results for similar problems \cite{halle1997diffusion, naji2007diffusion}. 
\\\\
 In Sections \ref{sec:caseI} to \ref{sec:caseIV} we study the macroscopic limits of the second model under different scaling regimes.  In Section \ref{sec:caseI} we apply the results of Section \ref{sec:static} to study the asymptotic behaviour of the fluctuating membrane model in the quenched fluctuation regime.  In Section \ref{sec:caseII} we consider the problem of lateral diffusion on a surface possessing only rapid temporal fluctuations, a regime which has been well studied for the particular model of a thermally excited Helfrich elastic membrane.  Using formal multiscale expansions, we derive the limiting behaviour of this model, e.g. \cite{reister2007lateral, naji2007diffusion}.  For the particular case of the Helfrich elastic membrane we recover the  estimates for the effective diffusion tensor given in \cite{naji2007diffusion, halle1997diffusion, reister2007hybrid}.  In Section \ref{sec:caseIII} we consider diffusion on a surface possessing both spatial and temporal fluctuations with comparable length/time scales.  We derive expressions for the limiting equation and study properties of the effective drift and diffusion tensor.  Finally, in Section \ref{sec:caseIV} we study the asymptotic behaviour of diffusion on a rapidly fluctuating surface possessing both spatial and temporal fluctuations, but where the temporal fluctuations occur at a faster scale than the spatial fluctuations.   
 \\\\
 In Section \ref{sec:other_scalings} we consider the particular case of diffusion on a two-dimensional fluctuating Helfrich membrane and exhaustively study the particular limits of this problem.  Conclusions regarding the unifying nature and novelty of the multiscale  approach to this problem, as well as further avenues of research are summarized in Section \ref{sec:conclusion}.  Formal justifications of the limit theorems given in this paper are provided in Appendix \ref{app:A}.

%To our knowledge the results in Sections  \ref{sec:static}, \ref{sec:caseI} and \ref{sec:caseII} have never been derived rigorously and we believe they have never been studied as distinguished limits of the same system of SDEs.   Moreover, the last two regimes have not been considered previously and the expressions provided for the effective evolution equations are new.  In Section \ref{sec:other_scalings} we consider the particular case of diffusion on a two-dimensional fluctuating Helfrich membrane and exhaustively study the particular limits of this problem.  Conclusions regarding the unifying nature and novelty of the multiscale  approach to this problem, as well as further avenues of research are summarized in Section \ref{sec:conclusion}.

\section{Diffusion on Time Dependent Surfaces}
\label{sec:background}
\subsection{Preliminaries}
\label{sec:prelims}
In this section we describe the formulation of Brownian motion moving on a time dependent surface embedded in $\R^{d+1}$.  We are primarily interested in quasi-planar membranes,  so  we will restrict our attention to surfaces which can be represented in the \emph{Monge parametrisation},  that is, surfaces which can be expressed as the graph of a sufficiently smooth function $H:\R^{d}\times [0,\infty) \rightarrow \R$.  Such a  surface $S(t)$ can then be parametrised over $\R^{d}$ by $J:\R^{d}\times [0,\infty) \rightarrow \R^{d+1}$ given by 
	\begin{equation*}
		J(x,t) = \left(x, H\left(x,t\right)\right).
	\end{equation*}
The function $H$ is known as the \emph{Monge gauge}.  In local coordinates $x \in \R^{d}$, the metric tensor of $S(t)$  induced from $\R^{d+1}$, can be written as		
\begin{equation}
	\label{eq:metric_tensor}
	G(x,t) = I + \nabla H(x,t) \otimes \nabla H(x,t)
\end{equation}
and the infinitesimal surface area element is given by $\sqrt{\norm{G}(x,t)}$, where
\begin{equation}
\label{eq:surface_area}
 |G|(x,t) := \det\bigl(G(x,t)\bigr) = 1 + \norm{\nabla H(x,t)}^{2}.
\end{equation}
It is clear that for any unit vector $e \in \R^{d}$,
\begin{equation}
\notag
	%\label{eq:metric_tensor_bounds}
	1 \leq e\cdot G(x,t) e \leq \norm{G}(x,t), \quad \mbox{for all } x \in \R^{d},
\end{equation}
so that $G^{-1}$ is positive definite (though not necessarily uniformly so, since $\norm{G}(x,t)$ can be arbitrarily large).  Given $F:\R^{d+1} \rightarrow \R$ smooth in a neighbourhood of $S(t)$,  the tangential gradient of $F$ is given in local coordinates by
$$ \nabla_{S(t)}F(J(x,t)) = \mathcal{P}(x, t)\nabla F(J(x,t)) = \nabla J(x,t)^\top G^{-1}(x,t)\nabla \left(F\circ J\right)(x, t).$$
Here, $\mathcal{P}(x,t)$ projects vectors in $\R^{d+1}$ onto the tangent space of $S(t)$ at local coordinate $x$, that is, $$\mathcal{P}(x, t) = I - \nu(x,t)\otimes \nu(x,t),$$ where $\nu(x,t)$ is the surface normal of $S(t)$.
In local coordinates, $\mathcal{L}_t$ acts on smooth functions $F \in C^2(\mathcal{\R}^{d+1})$ as follows, \cite{deckelnick2005computation, dziuk2013finite}:
			\begin{equation}
				\notag
				\label{eq:laplacebeltrami}
				\mathcal{L}_t F(J(x,t)) = \frac{1}{\sqrt{\norm{G}(x,t)}}\nabla\cdot\bigl(\sqrt{\norm{G}(x,t)}G^{-1}(x,t)\nabla \left(F\circ J\right)(x,t)\bigl), \quad \mbox{ for } x \in \R^d.
			\end{equation}
We note that for a flat surface, for which $H \equiv 0$, the operator  reduces to the standard Laplace operator.

\subsection{Brownian Motion on an Evolving Surface}
\label{sub:bm}
While the properties of Brownian motion on static surfaces have been widely studied in the applied literature \cite{van1985brownian, sbalzarini2006simulations,almeida1995lateral,naji2007diffusion}, Brownian motion on time dependent surfaces has been given less consideration.   In \cite{naji2007diffusion} the authors formally derive the overdamped Langevin equation for diffusion on a surface in the Monge gauge as the limit of a random walk constrained to the surface.   In \cite{coulibaly2011brownian}, the author provides a rigorous definition of Brownian motion on a manifold with a time dependent metric.  As we are working entirely in the Monge gauge we provide the following natural definition of Brownian motion on a fluctuating Monge gauge surface, which is equivalent to that given in \cite{coulibaly2011brownian} in the graph representation.

\begin{definition}
\label{def:bm}
Let $(\Omega, \mathcal{F}, \P)$ be a complete probability space endowed with a right-continuous filtration $(\mathcal{F}_t)_{t \geq 0}$.  Let $S(t)$ be a time dependent surface, with corresponding  Monge gauge $H(x, t)$, where $H(\cdot, t) \in C^2(\R^d)$, for all $t \geq 0$.   Then, an $\R^d$-valued process $X_x(t)$ defined on $\Omega\times [0,T)$ is called a Brownian motion on $S(t)$ started at $X_x(0)= x \in \R^d$, if $X(t)$ is almost surely continuous, adapted with respect to $\mathcal{F}_t$, and if for every smooth function $f:\R^d \rightarrow \R$,
	$$
		f(X_x(t)) - f(x) - \int_0^t \mathcal{L}_s f(X_x(s))\,ds,
	$$
is a local martingale \cite[Definition 5.5]{karatzas1991brownian},  where $\mathcal{L}_s$ is the Laplace-Beltrami operator (\ref{eq:laplacebeltrami}) in local coordinates on $\R^d$.
\end{definition}
\begin{remark}We note that in the case where $H \equiv 0$, Definition \ref{def:bm} reduces to standard Brownian motion on $\R^d$.
\end{remark}

Let $S(t)$ be a time dependent surface with Monge gauge $H(x,t)$ such that for $t \in [0, \infty)$, $H(\cdot, t) \in C^2(\R^d)$.  It is possible to obtain a standard It\^{o} SDE that describes Brownian motion on $S(t)$.  To this end, define $X_x(t)$ to be the solution of the following It\^{o} SDE
\begin{equation}
\label{eq:sde}
\begin{aligned}
	dX_x(t) & = F(X(t), t)\,dt  + \sqrt{2\Sigma(X(t),t)}\,dB(t), \\
	X_x(0) &= x,
\end{aligned}
\end{equation}
where $$F(x, t)  = \frac{1}{\sqrt{\norm{G(x, t)}}}\nabla\cdot \left(\sqrt{\norm{G(x, t)}} G^{-1}(x, t)\right),$$
$$\Sigma(x,t) = G^{-1}(x, t),$$
and $B(\cdot)$ is a standard $\R^d$-valued Brownian motion. By It\^{o}'s formula (Theorem 3.3, \cite{karatzas1991brownian}), for smooth $f:\R^d \rightarrow \R$:
\begin{equation*}
\begin{aligned}
	f(X_x(t)) - f(x) = &\int_0^t \frac{1}{\sqrt{\norm{G}(X_x(s),s)}}\nabla\cdot\left(\sqrt{\norm{G}(X_x(s),s)}\right)\cdot\nabla_x f(X_x(s))\,ds \\ 
		& + \int_0^t G^{-1}(X_x(s),s):\nabla_x \nabla_x f(X_x(s))\,ds \\
		& + \int_0^t \sqrt{G^{-1}(X_x(s),s)}\nabla_x f(X_x(s))\,dB(s) \\
		&= \int_0^t \mathcal{L}_s f(X_x(s))\, ds + M(t),
\end{aligned}
\end{equation*}
where $M(t)$ is a local martingale.  It follows that $X_x(t)$ satisfies the conditions of Definition \ref{def:bm} to be a Brownian motion on the evolving Monge gauge surface $S(t)$.
\\\\
Independently, we may derive from first principles the evolution equation for the probability density $\rho(z, t)$ of a particle undergoing Brownian motion on a time dependent surface given in the Monge gauge. The equation corresponds to the Fokker-Planck equation for the SDE (\ref{eq:sde}).  Consider a particle undergoing Brownian motion moving on the time dependent surface $S(t)$, and suppose that the process possesses a density $\rho(t,z)$ with respect to the Lebesgue measure on $S(t)$.  Let $\Theta$ be an arbitrary bounded region in $\R^{d}$ with smooth boundary, and let $\mathcal{M}(t)$ be the corresponding region on the fluctuating surface, that is,
\begin{equation*}
	\mathcal{M}(t) = J(\Theta, t).
\end{equation*}
The density $\rho(z,t)$ is conserved on the surface $S(t)$ for all $t$ such that $$\int_{S(t)} \rho(z,t)\,dz = 1, \quad \mbox{ for } t \geq 0.$$ Moreover, we assume that $\rho$ flows from one region of $S(t)$ to another with local Fickian flux $-\nabla_{S(t)}\rho(z,t)$ where $\nabla_{S(t)}$ is the tangential derivative on the surface $S(t)$. It follows that $\rho(z,t)$ satisfies the following equation 
\begin{equation*}
	\frac{\partial}{\partial t}\int_{\mathcal{M}(t)} \rho(z,t)\, dz = -\int_{\partial \mathcal{M}(t)} \nabla_{S(t)} \rho(z,t)\cdot n(z, t)\, dz = \int_{\mathcal{M}(t)} \Delta_{S(t)} \rho(z,t)\, dz,
\end{equation*}
where $n(z,t)$ is the conormal vector along the boundary of $\mathcal{M}(t)$.  See \cite{deckelnick2005computation} for details.   Changing variables from $z \in S(t)$ to local coordinates $x \in \R^d$ induces a change of measure $dz = \sqrt{\norm{G}(x,t)}dx$ where $\norm{G}$ is given by (\ref{eq:surface_area}).  We can thus rewrite the above equation in local coordinates as
\begin{equation*}
\begin{split}
	\frac{\partial}{\partial t}&\int_{\Theta} \rho(J(x,t),t)\sqrt{\norm{G}(x,t)}\,dx \\ &=  \int_{\Theta} \nabla\cdot\left(\sqrt{\norm{G}(x,t)}G^{-1}(x,t)\nabla \rho(J(x,t),t)\right) \, dx.
\end{split}
\end{equation*}
As we are only interested in the diffusion process projected onto the plane, we weight the  density $\rho$ with the surface area element $\sqrt{\norm{G}(x,t)}$ to compensate for the local changes in area of the surface. To this end, define the density $q:\R^d \times [0,\infty)\rightarrow \R$ with respect to the Lebesgue measure on $\R^d$ by:
\begin{equation*}
	q(x, t) := \rho\left(J\left(x,t\right),t\right)\sqrt{\norm{G}(x,t)}. 
\end{equation*}
It is straightforward to check that $\int_{\R^d} q(x,t) \,dx = 1$ for all time $t$.  Substituting $q(x,t)$ in the previous equation, and noting that $\Theta$ is arbitrary, we obtain the following PDE for $q$ on $\R^{d}$:
\begin{equation}
	\label{eq:f_planck}
	\frac{\partial}{\partial t}q(x,t) =  \nabla\cdot\left(\sqrt{\norm{G}(x,t)}G^{-1}(x,t)\nabla \left(\frac{q(x,t)}{\sqrt{\norm{G}(x,t)}}\right)\right) = \mathcal{L}_t^*q(x,t),
\end{equation}
where $\mathcal{L}_t$ is the local coordinate representation of the Laplace-Beltrami operator
$$
	\mathcal{L}_t f = \frac{1}{\sqrt{G(x,t)}}\nabla\cdot\left(\sqrt{G(x,t)}G^{-1}(x,t)\nabla f(x)\right), \qquad f \in C^2_b(\R^d).
$$
We note that the PDE (\ref{eq:f_planck}) is the Fokker-Planck evolution PDE for a diffusion process with infinitesimal generator given by $\mathcal{L}_t$  \cite[Chapter 6]{friedman1975stochastic},  in particular for the SDE (\ref{eq:sde}).  The corresponding backward Kolmogorov equation for the observable $u(x,t) := \mathbb{E}\left[u_0(X_x(t)\right]$ is given by the following PDE:
\begin{subequations}
\label{eq:kbe1}
\begin{align}
	\frac{\partial u(x,t)}{\partial t} &= \mathcal{L}_t u(x,t),	\quad (x,t) \in \R^d \times (0, \infty), \\
	u(x, 0) &= u_0(x) \qquad x \in \R^d.
\end{align}
\end{subequations}

\section{A Simple Model for Membrane Fluctuations}
\label{sec:model}
In this section we introduce a simple model for a fluctuating membrane which is based on the model for the thermally excited Helfrich membrane derived in \cite{gov2004membrane, naji2007diffusion} and  \cite{reister2007lateral}.  The fluctuating membrane surface is represented in the Monge gauge by a time dependent random field $H(x,t)$ over the region $[0,L]^d$.  We assume that for each $t \geq 0$, $H(x, t)$ is smooth in $x$ and  $H(x, t)$ is periodic in $x$ with period $L_H$ for each $t \geq 0$.  Moreover,  we shall assume there is a characteristic timescale $T_H$ associated to $H(x,t)$; it can be a correlation time when $H$ is random, or the period when $H(x,t)$ is periodic.
Consider a particle diffusing on a realisation of the surface $H(x,t)$ with an isotropic molecular diffusion tensor $D_0$.  Let $X(t)$ denote the projected trajectory on $\R^d$, and let $L$ and $T$ be the macroscopic characteristic length and time scales at which the process  $X(t)$ is being observed.  We introduce the notation
\begin{equation}
	\label{eq:scaling}
\begin{aligned}
	x = L x^*, & \qquad t = T t^*, \\
	X(Tt^*) &= L X^*(t^*),\\
	H(x, t) &= \tilde{H}{H}^*\left(\frac{x}{L_H}, \frac{t}{T_H}\right),
\end{aligned}
\end{equation}
where $\tilde{H}$ is a scaling constant, so that rescaled function ${H}^*$ has period $1$ in space.  Define the parameters $\delta$ and $\tau$ to be 
\begin{equation}
	\delta = \frac{L_H}{L} \quad \mbox{ and } \quad \tau = \frac{T_H}{T},
\end{equation}
which quantify the scale separation between the diffusion process $X(t)$ and the spatial and temporal fluctuations respectively.  To make explicit the relationship between spatial and temporal fluctuations we will assume that $\delta = \epsilon^\alpha$ and $\tau = \epsilon^\beta$ for constants $\alpha > 0 $ and $\beta \in \R$.  The assumption of rapid fluctuations implies that $\epsilon \ll 1$.   Rescaling (\ref{eq:sde}) using (\ref{eq:scaling}), setting $\tilde{H} = L_H$ and dropping the stars we obtain the following rescaled SDE
\begin{equation}
\label{eq:nondim_diffusion_sde}
\begin{split}
d{X^\epsilon}(t) &=\frac{1}{\epsilon^{\alpha}} \frac{1}{\sqrt{\norm{G}\left(\frac{{X^\epsilon}(t)}{\epsilon^{\alpha}}, \frac{t}{\epsilon^{\beta}}\right)}}\nabla_{y}\cdot\left(\sqrt{\norm{{G}}}{G}^{-1} \right)\left(\frac{{X^\epsilon}(t)}{\epsilon^{\alpha}}, \frac{t}{\epsilon^{\beta}}\right)\, dt \\ &+ \sqrt{2{G}^{-1}\left(\frac{{X^\epsilon}(t)}{\epsilon^{\alpha}},\frac{t}{\epsilon^{\beta}}\right)} \,dB(t),
\end{split}
\end{equation}
where 
\begin{equation}
{G}(y,s) = I + \nabla_{y}{H}(y, s) \otimes \nabla_{y}{H}(y, s).
\end{equation}
\\\\
Let $\mathbb{K}$ be a finite index set with cardinality $K = \norm{\mathbb{K}}$.  As a generalisation of the Helfrich elastic fluctuating membrane model, we will assume that the random field $H(x,t)$ can be written as
$H(x, t)  = h(x, \eta(t))$,
where $$h(x , \eta) = \sum_{k \in \mathbb{K}} \eta_k(t)e_k(x) = \langle \eta(t), e(x) \rangle,$$
where $e_k \in C^\infty(\T^d)$; these functions can be extended to $\R^d$ by periodicity.  We model the stochastic process $\eta(t)$ as an $\R^K$-valued Ornstein-Uhlenbeck (OU) process given by 
\begin{equation}
	d\eta(t) =  -\Gamma \eta(t) \,dt + \sqrt{2\Gamma \Pi}\,dW(t),
\end{equation}
where $W(\cdot)$ is a standard $\R^K$-valued Brownian motion.  The drift and diffusion matrices $\Gamma$ and $\Pi$ are symmetric, positive definite and are assumed to commute.  Substituting this definition of $H(x,t)$ into (\ref{eq:nondim_diffusion_sde}), the evolution of the system can be described by the joint process $(X^\epsilon(t), \eta^\epsilon(t))$ satisfies the following It\^{o} SDE
\begin{subequations}
\label{eq:nondim_diffusion_sde_coupled}
\begin{align}
d{X^\epsilon}(t) &=\frac{1}{\epsilon^{\alpha}}F\left(\frac{{X^\epsilon}(t)}{\epsilon^{\alpha}}, \eta^\epsilon(t)\right)\,dt  + \sqrt{2{\Sigma}\left(\frac{{X^\epsilon}(t)}{\epsilon^{\alpha}},\eta^\epsilon\left(t\right)\right)}\,dB(t),\\
d\eta^\epsilon(t) &= -\frac{1}{\epsilon^\beta}\Gamma \eta^\epsilon(t)dt + \sqrt{\frac{2\Gamma\Pi}{\epsilon^\beta}}\,dW(t)
\end{align}	
\end{subequations}
where $F:\T^d \times \R^K \rightarrow \R^d$ is given by
\begin{equation}
\label{eq:drift_term}
 F(x, \eta) := \frac{1}{\sqrt{\norm{g}(x, \eta)}}\nabla\cdot\left(\sqrt{\norm{{g}}}{g}^{-1} \right)\left(x, \eta\right),
\end{equation}
$\Sigma:\T^d \times \R^K \rightarrow \R^{2\times 2}_{\rm sym}$ is 
\begin{equation}
\label{eq:diffusion_term}
\Sigma(x, \eta) := g^{-1}(x, \eta),
\end{equation}
and $g(x , \eta) := I + \nabla h(x, \eta) \otimes \nabla h(x, \eta)$.  Since $\Gamma$ and $\Pi$ commute, it is straightforward to check that the OU process $\eta^\epsilon(t)$ is ergodic,  with unique invariant measure given by 
\begin{equation}
	\label{eq:mu_infty}
	\mu_\eta = \mathcal{N}(0, \Pi),
\end{equation}
with density $$\rho_\eta(\eta) \, \propto \,\exp\left(-\frac{\eta \cdot \Pi^{-1}  \eta}{2}\right),$$
with respect to the Lebesgue measure on $\R^K$.   Since $\eta(t)$  converges exponentially fast to its invariant distribution, it is reasonable to assume that  $\eta(t)$ is started in the stationary distribution i.e. $\eta(0) \sim \mu_\eta$, for the sake of simplicity.
\\\\
An equivalent approach, as described in Section \ref{sub:bm}, is to consider the asymptotic behaviour of the backward Kolmogorov equation (\ref{eq:kbe1}) corresponding to the system of SDEs (\ref{eq:nondim_diffusion_sde_coupled}) for the evolution of an observable $u^\epsilon:\R^d\times \R^K \times [0,T) \rightarrow \R$:
\begin{subequations}
	\label{eq:nondim_diffusion_pde_coupled}
	\begin{align}
	\frac{\partial u^\epsilon(x, \eta, t)}{\partial t} & = \mathcal{L^\epsilon}u^\epsilon(x,\eta, t), &\mbox{ for } (x, \eta, t) \in \R^d \times \R^K \times (0, T),	\\
	u^\epsilon(x, \eta, 0) &= v(x, \eta),	&\mbox{ for } (x, \eta) \in \R^d \times \R^K.
	\end{align}
\end{subequations}
The infinitesimal generator $\mathcal{L}^\epsilon$ can be written as $$\mathcal{L}^\epsilon f(x, \eta) =  \mathcal{L}^\epsilon_1 f(x, \eta) + \mathcal{L}^\epsilon_2 f(x, \eta). $$
The operator
 $$\mathcal{L}_1^\epsilon f(x) := \frac{1}{\sqrt{\norm{g}(x/\epsilon^\alpha, \eta)}} \nabla_x\cdot\left(\sqrt{\norm{g}(x/\epsilon^\alpha, \eta)}g^{-1}(x/\epsilon^\alpha, \eta)\nabla_x f(x)\right),$$
encodes the effect of the rapid spatial fluctuations, while 
$$\mathcal{L}_2^\epsilon f(\eta) := \frac{1}{\epsilon^{\beta}}\left(-\Gamma \eta \cdot \nabla_\eta  + \Gamma \Pi:\nabla_\eta \nabla_\eta f\right), $$
describes the rapid temporal fluctuations.
\\\\
The following proposition establishes the well-posedness of equation (\ref{eq:nondim_diffusion_sde_coupled}) for the joint process $(X^\epsilon(t), \eta^\epsilon(t))$.
\begin{proposition}
	\label{prop:well_posedness_sde}
	Let $X_0$ and $\eta_0$ be random variables, independent of $B(\cdot)$ and $W(\cdot)$ such that $\mathbb{E}\left[X_0\right]^{2} < \infty$ and $\mathbb{E}\left[\eta\right]^{2} < \infty$.  Then the system of SDEs (\ref{eq:nondim_diffusion_sde_coupled}) has a unique strong solution $(X^\epsilon(t), \eta^\epsilon(t))$ satisfying $X(0) = X$ and $\eta(0) = \eta$.  Moreover, the solution $(X^\epsilon(t), \eta^\epsilon(t)) \in C([0,T]; \R^d\times\R^K)$.
\end{proposition}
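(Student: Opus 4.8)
\emph{Plan.} The idea is to exploit the triangular structure of the coupled system (\ref{eq:nondim_diffusion_sde_coupled}): the equation for $\eta^\epsilon$ is autonomous, so I would solve it first and then feed its pathwise solution into the equation for $X^\epsilon$. For fixed $\epsilon>0$ the factors $\epsilon^{-\alpha}$ and $\epsilon^{-\beta}$ are harmless constants that play no role in well-posedness, so I suppress them. The $\eta^\epsilon$-equation is a linear SDE with constant coefficients --- a rescaled Ornstein--Uhlenbeck process --- so it has the explicit unique strong solution
\[
\eta^\epsilon(t)=e^{-\Gamma t/\epsilon^\beta}\eta_0+\sqrt{2\Gamma\Pi/\epsilon^\beta}\int_0^t e^{-\Gamma(t-s)/\epsilon^\beta}\,dW(s),
\]
which is Gaussian, has continuous sample paths, and, using the second-moment assumption on $\eta_0$ together with the Burkholder--Davis--Gundy inequality applied to the stochastic convolution, satisfies $\mathbb{E}\bigl[\sup_{t\in[0,T]}|\eta^\epsilon(t)|^p\bigr]<\infty$ for every $p\ge 1$. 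In particular, for $\mathbb{P}$-almost every $\omega$ the path $t\mapsto\eta^\epsilon(t,\omega)$ is continuous on $[0,T]$ and hence bounded.

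The next step is to record the regularity of the coefficients. Since $h(x,\eta)=\langle\eta,e(x)\rangle$ with finitely many $e_k\in C^\infty(\mathbb{T}^d)$, the map $(x,\eta)\mapsto\nabla_x h(x,\eta)$ is smooth, $1$-periodic in $x$, and linear in $\eta$; hence $g=I+\nabla_x h\otimes\nabla_x h$ is smooth in $(x,\eta)$ with $|g|=1+|\nabla_x h|^2\ge 1$, so that $\Sigma=g^{-1}$ and $F$ given by (\ref{eq:drift_term})--(\ref{eq:diffusion_term}) are smooth on $\mathbb{T}^d\times\mathbb{R}^K$. Moreover $0<\Sigma\le I$, and on each slab $\mathbb{T}^d\times\{|\eta|\le n\}$ the matrix $\Sigma$ takes values in a compact subset of the positive-definite cone, so $\sqrt{\Sigma}$ is smooth there as well. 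Consequently, for each $n\in\mathbb{N}$ there is $C_n<\infty$ such that
\begin{gather*}
|F(x,\eta)|+\|\sqrt{\Sigma(x,\eta)}\|\le C_n, \\
|F(x,\eta)-F(x',\eta)|+\|\sqrt{\Sigma(x,\eta)}-\sqrt{\Sigma(x',\eta)}\|\le C_n\,|x-x'|,
\end{gather*}
for all $x,x'\in\mathbb{R}^d$ and $|\eta|\le n$; the $x$-boundedness is immediate from periodicity, while the bound on $F$ follows from the explicit formula (\ref{eq:drift_term}) by a routine computation that uses $|g|\ge 1$ to control the denominators.

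I would then construct $X^\epsilon$ by localization in $\eta$. Fix the realisation $\eta^\epsilon(\cdot)$ and set $\theta_n:=\inf\{t\ge 0:|\eta^\epsilon(t)|\ge n\}$, so that $\theta_n\uparrow\infty$ almost surely and $\theta_n\ge T$ once $n$ exceeds the (random) bound on the path. Replacing $F$ and $\sqrt{\Sigma}$ by globally Lipschitz and bounded extensions $F^{(n)},\sigma^{(n)}$ that agree with them on $\{|\eta|\le n\}$ (possible by the previous step), the truncated $X$-equation has coefficients that are progressively measurable in $t$ --- because $\eta^\epsilon$ is adapted and continuous and $F,\Sigma$ are continuous --- and globally Lipschitz and bounded in $x$ with a \emph{deterministic} constant, so by the standard existence and uniqueness theorem for It\^o SDEs \cite[Ch.~5]{karatzas1991brownian} it has a unique strong solution $X^{\epsilon,n}\in C([0,T];\mathbb{R}^d)$ with $\mathbb{E}\bigl[\sup_{t\in[0,T]}|X^{\epsilon,n}(t)|^2\bigr]<\infty$, using the second-moment assumption on $X_0$. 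Uniqueness forces $X^{\epsilon,n}=X^{\epsilon,n+1}$ on $[0,\theta_n]$, so the processes patch together into a single $X^\epsilon\in C([0,T];\mathbb{R}^d)$ which, since $\theta_n\ge T$ eventually, solves the original $X$-equation on $[0,T]$. Then $(X^\epsilon,\eta^\epsilon)$ solves (\ref{eq:nondim_diffusion_sde_coupled}) and lies in $C([0,T];\mathbb{R}^d\times\mathbb{R}^K)$; uniqueness for the full system follows because its $\eta$-marginal is unique and, given that marginal, each localized $X$-equation has a unique solution.

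\emph{Main obstacle.} The one genuine difficulty is that the coefficients of (\ref{eq:nondim_diffusion_sde_coupled}) are neither globally Lipschitz nor of linear growth in $\eta$ --- the dependence on $\eta$ is polynomial, entering through $g=I+\nabla h\otimes\nabla h$ --- so the textbook theorem cannot be applied to the joint system directly. What rescues the argument is precisely the one-way coupling: $\eta^\epsilon$ evolves on its own, cannot explode, and has continuous (hence locally bounded) sample paths, which is exactly what is needed to localize in $\eta$ and reduce to an $X$-equation whose coefficients are bounded and Lipschitz in $x$, uniformly over the relevant finite (if random) range of $\eta$.
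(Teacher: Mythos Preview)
The paper does not actually supply a proof of this proposition: it is stated and immediately closed with a \qed, presumably because the authors regard it as standard or defer to the companion thesis \cite{duncan2013thesis}. Your argument is therefore not competing with any written proof in the paper, but it is a correct and complete way to establish the result. The key structural observation you make---that the system is triangular, with $\eta^\epsilon$ an autonomous linear SDE whose continuous paths are almost surely bounded on $[0,T]$, so that one can localize in $\eta$ and reduce the $X^\epsilon$-equation to one with coefficients that are bounded and Lipschitz in $x$ uniformly on each slab $\{|\eta|\le n\}$---is exactly the point that makes the proposition nontrivial, since $F$ and $\sqrt{\Sigma}$ fail to be globally Lipschitz in $\eta$. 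Your identification of this as the ``main obstacle'' is apt, and the localization-and-patching argument you sketch is the standard and appropriate remedy.
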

\smartqed \qed
	
Our objective is to study the behaviour of $X^\epsilon(t)$ and of solutions to the corresponding backward Kolmogorov equation as $\epsilon \rightarrow 0$.  The parameters $\alpha$ and $\beta$ quantify the relative speed between the spatial and temporal fluctuations respectively.  Thus, we expect that the limiting behaviour will vary for different values of $\alpha$ and $\beta$.   In this paper we will study the asymptotic behaviour of the coupled system in the following cases, which are demonstrative of the different possible limiting behaviours of the system.

\begin{description}
%\item[Case 0: Deterministic Periodic Surface]\hfill \\
%The first case we consider is where the surface is determined by a deterministic, time independent,  periodic function possessing rapid spatial fluctuations.  This case had been considered previously in \cite{aizenbud1985diffusion,halle1997diffusion,king2004apparent}.
%\\
\item[Case I:  $\alpha = 1$ and $\beta = -\infty$]\hfill \\
In this regime the temporal fluctuations occur on a time scale slower than the characteristic time scale of the diffusion process, so that the regime captures the macroscopic behaviour of a  particle diffusing laterally over a stationary realisation of the random surface field $h(x, \eta)$.  This situation has been studied in the case of diffusion on a Helfrich elastic membrane with quenched thermal fluctuations in \cite{naji2007diffusion}.
\\
\item[Case II: $\alpha = 0$ and $\beta = 1$]\hfill \\
In this regime, the microscopic fluctuations are due to the temporal fluctuations.  The motivating example in this regime is that of diffusion on a Helfrich elastic membrane with annealed fluctuations; this problem has been studied in detail \cite{naji2007diffusion,gustafsson1997diffusion,reister2007lateral}.
\\
\item[Case III: $\alpha = 1$ and $\beta = 1$]\hfill \\
In this regime we consider lateral diffusion on surfaces possessing both rapid spatial and temporal fluctuations, with the spatial and temporal fluctuations occurring at comparable scales.  While this regime has not been studied before, it naturally extends the work covered in \cite{halle1997diffusion,naji2007diffusion,reister2007lateral}  and helps provide a complete picture.  
\\
\item[Case IV: $\alpha = 1$ and $\beta = 2$]\hfill \\
In this regime we consider surfaces with both rapid spatial and temporal fluctuations but the temporal fluctuations occur at a faster scale compared to the spatial fluctuations.  As in Case III, this regime has not been considered previously.
\end{description}

In each of the above cases we will show that the lateral diffusion process $X^\epsilon$ is asymptotically characterized by a limiting diffusion process with constant diffusion tensor and drift term which will be qualitatively different in each case. 
\\\\
Before considering the above four regimes for the fluctuating membrane model,  we first consider the problem of lateral diffusion on a static, periodic surface. The diffusion process $X^\epsilon(t)$ can be described by an SDE with rapidly varying, periodic coefficients.  In Section \ref{sec:static} we use standard periodic homogenization techniques to show that the asymptotic behaviour is that of a Brownian motion on $\R^d$ with constant diffusion tensor $D$.  The resulting analysis serves as a basis for the subsequent models.
\\\\
In the Case I regime, considered in Section \ref{sec:caseI},  each stationary realisation of the random field gives rise to a homogenized diffusion tensor.   As in \cite{naji2007diffusion}, we consider the average homogenized diffusion coefficient as the effective diffusion tensor in this regime.  In the Case II regime considered in Section \ref{sec:caseII},  the limiting behaviour is determined by the properties of the stationary distribution of the OU process $\eta(t)$ and deriving the effective diffusion process can be viewed as an averaging problem \cite{pavliotis2008multiscale}. 
\\\\
In the regimes covered by Case III and Case IV we must consider the interaction between the temporal and spatial fluctuations.   In the Case III regime, considered in Section \ref{sec:caseIII}, the spatial fluctuations homogenise the diffusion process ``faster" than the temporal fluctuations, and the result is that the effective diffusion tensor will merely be the effective diffusion tensor from Case I averaged over the invariant measure of the OU process $\eta(t)$.  This macroscopic limit was considered in \cite{garnier1997homogenization}.   Deriving the asymptotic behaviour in the Case IV regime, considered in Section \ref{sec:caseIV}, proves more complicated due to the fact that the ``fast process'' which characterises the rapid spatial and temporal fluctuations does not possess  an explicit invariant measure.  Once the geometric ergodicity of the fast process with respect to a unique invariant measure is established,  the approach is similar to the classical probabilistic homogenisation arguments of \cite{bensoussan1978asymptotic}.   Although a limiting equation is established,  the lack of an explicit invariant measure makes it hard to establish bounds on the effective diffusion tensor.
\\\\
We have not yet addressed the question of the limiting behaviour of $X^\epsilon(t)$ for other values of $\alpha$ and $\beta$ besides those considered in Cases I - IV.  The answer to this question is dependent on the properties of the surface $H(x,t)$.  However, in Section \ref{sec:other_scalings}, for the particular case of diffusion on a thermally fluctuating Helfrich surface we will show that the limits corresponding to Case I to Case IV are exhaustive, in the sense that these are the only distinguished limits that can arise from this system.

\section{Motivating Example: The Helfrich Elasticity Membrane Model}
\label{sec:helfrich}
In this section we describe a particular instance of the above model which describes lateral diffusion of particles on a thermally-excited Helfrich membrane.  Throughout the paper we shall consider  this model as a prototypical example to which the theory can be applied.  The Canham-Helfrich theory for biological membranes, originally developed by Canham \cite{canham1970minimum} and Helfrich \cite{helfrich1973elastic}, is a classical continuum model for studying the macroscopic properties of lipid bilayers. The problem of diffusions on Helfrich elastic surfaces in the context of integral protein diffusion on lipid bilayers was studied in various works, in particular \cite{naji2007diffusion,reister2007lateral,lin2004dynamics,king2004apparent}.  As in all the previous works, we represent  the quasi-planar membrane as a  2D time dependent fluctuating surface with Monge Gauge $H$, periodic over the square $\mathcal{D} = [0,L]^2$, where the equilibrium of the fluctuations is governed by the following harmonic approximation to the  Helfrich Hamiltonian
\begin{equation}
	\mathcal{H}[H] = \frac{1}{2}\int_{\mathcal{D}} \left[\kappa (\Delta H(y))^2 + \sigma (\nabla H(y))^2\, \right] \, dy.
\end{equation}
Here the scalars $\kappa$ and $\sigma$ denote the bending rigidity and surface tension, respectively.  The surface is coupled with a low-Reynolds number hydrodynamic medium.  Using an analogous approach as in \cite{doi1988theory} for polymer dynamics, under the assumption of linear response, the dynamics of the surface fluctuations will be described by the following SPDE:
\begin{equation}
	\label{eq:spde_helfrich}
	\frac{d H(t)}{dt} = -R\, A H(t) + \zeta(t),
\end{equation}
where $A H(t)$ is the restoring force for the free energy $\mathcal{H}$,  that is,
	$$A H(t) = -\frac{\delta \mathcal{H}}{\delta H}[H(t)] = -\kappa\Delta^2 H(t) + \sigma \Delta H(t)$$
The operator $R$ which characterises the effect of  nonlocal interactions of the membrane through the hydrodynamic medium is given by
$$
	R f(x) = (\Lambda* f)(x),  \qquad f \in L^2([0,1]^2)
$$
where $*$ denotes convolution, and $\Lambda(x)$ is given by the diagonal part of the Oseen tensor, \cite{kim1991microhydrodynamics}:
	$$\Lambda(x) = \frac{1}{8 \pi \lambda\norm{x}},$$
where $\lambda$ is the viscosity of the surrounding hydrodynamic medium.
\\\\
The space-time fluctuations are given by $\zeta(t)$, a centered Gaussian random field white in time and with spatial fluctuations having covariance operator $2 k_B T R,$ where $k_B T$ is the system temperature.   From linear response theory, it follows that the dynamics in (\ref{eq:spde_helfrich}) satisfy the fluctuation-dissipation relation, required to ensure that, formally, the invariant measure is proportional to $\exp(-\mathcal{H}/({k_BT}))$. 
\\\\
Let $\left\lbrace e_k \, | \, k \in \mathbb{K}_\infty \right\rbrace$ be the standard Fourier basis for $L^2([0,1]^2; \R)$ with periodic boundary conditions,  indexed by $$\mathbb{K}_\infty = \mathbb{Z}^2\setminus\lbrace (0,0)\rbrace.$$ 
It is straightforward to check that the invariant measure of $H(t)$ is given by the Gaussian measure $\mathcal{N}(0, \mathcal{C})$ where
\begin{equation}
\notag
%\label{eq:helfrich_covariance}
\mathcal{C} = {k_BT}\,\sum_{k \in \mathbb{K}_\infty}\left(\kappa \norm{2\pi k}^4 + \sigma L^2\norm{2\pi k}^2\right)^{-1} e_k(x)\otimes e_k(x).
\end{equation}
  The operator $\mathcal{C}^{-\frac{1}{2}}$ satisfies Assumptions 2.9 (i)-(iv) of \cite{stuart2010inverse}, so that its spectrum grows commensurately with the spectrum of $-\Delta$.  It follows from \cite[Lemma 6.25]{stuart2010inverse} that the stationary realisations of the random field will be, H\"{o}lder continuous with exponent $\alpha < 1$, but not for $\alpha = 1$.  This implies that realisations are not sufficiently regular to allow well-defined tangents at every point on the surface. Indeed, $H(x,t)$ will be almost surely nowhere differentiable with respect to $x$ so that it is not possible to consider a laterally diffusive process on a realisation of this random field. We must thus introduce an ultraviolet cut-off by setting $\langle e_k, \mathcal{C} e_k \rangle = 0$ for wavenumbers $k \not \in \mathbb{K}$, where
$$\mathbb{K} = \lbrace k \in \mathbb{K}_\infty \, | \norm{k} \leq c \rbrace,$$
for some fixed constant $c > 0$  and define $K = \norm{\mathbb{K}}$.
\\\\
 Looking for solutions $H(x,t)$ of the form $H(x,t) = h(x, \eta(t))$, for $h:\T^2 \times \R^K$ given by
$$h(x, \eta) = \sum_{k \in \mathbb{K}}\eta_k e_k(x),$$
 after substituting in (\ref{eq:spde_helfrich}) we note that the SPDE diagonalises to obtain a system of Ornstein-Uhlenbeck processes describing the dynamics of the Fourier modes.   If we consider the trajectory $X(t)$  of a particle undergoing Brownian motion on the surface $h(x,\eta(t))$ with scalar molecular diffusion tensor $D_0$ then after nondimensionalisation we obtain the following system of equations
 \begin{subequations}
\begin{align}
d{X^\epsilon}(t) &=F\left({{X^\epsilon}(t)}, \eta^\epsilon(t)\right)\,dt  + \sqrt{2{\Sigma}\left({{X^\epsilon}(t)},\eta^\epsilon\left(t\right)\right)}\,dB(t),\\
d\eta^\epsilon(t) &= -\frac{1}{\epsilon}\Gamma \eta^\epsilon(t)dt + \sqrt{\frac{2\Gamma\Pi}{\epsilon}}\,dW(t)
\end{align}	
\end{subequations}
where  $\epsilon = D_0 \lambda L$ and where $F$ and $\Sigma$ are given by (\ref{eq:drift_term}) and (\ref{eq:diffusion_term}), respectively.  The OU process coefficients are determined by $\Gamma = \mbox{diag}(\Gamma_k)$ with
\begin{equation}
	\label{eq:helfrich_drift}
	\Gamma_{k} =  \frac{\kappa^*\norm{2\pi k}^4 + \sigma^* \norm{2\pi k}^2}{\norm{2\pi k}}
\end{equation}
and  $\Pi = \mbox{diag}(\Pi_k)$ with
\begin{equation}
	\label{eq:helfrich_diffusion}
	\Pi_{k} = \frac{1}{\kappa^* \norm{2\pi k}^4 + \sigma^* \norm{2\pi k}^2},
\end{equation}
where $\kappa^*$ and $\sigma^*$ are the nondimensional constants given by $$\kappa^* = \frac{\kappa}{k_B T} \quad \mbox{ and } \quad \sigma^* = \frac{\sigma L^2}{k_B T},$$ respectively.  The invariant measure of the $\R^K$-valued OU process $\eta(t)$ is then given by
\begin{equation}
	\notag
	\mathcal{N}(0, \Pi) = \prod_{k \in \mathbb{K}}\mu_k,
\end{equation}
where for each $k \in \mathbb{K}$,  $\mu_k$ is the invariant measure of $\eta_k$ given by
\begin{equation}
\label{eq:invariant_measure_helfrich}
\mu_k = \mathcal{N}\left(0, \Pi_k\right).
\end{equation}
The parameter $\chi = \epsilon^{-1}$ had already been considered in \cite{naji2007diffusion} where it was called the \emph{dynamic coupling 	parameter} because it controls the scale separation between the diffusion and the surface fluctuations.  For the particular case of band-3 protein diffusion on a human red blood cell, the typical values of parameters give $\epsilon \approx 0.3$, which suggests that $\epsilon$ is an appropriate small-scale parameter.  Of course, the value of $\epsilon$ will vary greatly for different scenarios.   We will discuss the limiting behaviour of the Helfrich model in Section \ref{sec:caseI}, however, we will also study this model in the other regimes as it provides an illuminating example.  In particular, for each scaling we will consider the effects of the parameters $\kappa^*$, $\sigma^*$ on the limiting diffusion tensor.

% As $\chi \rightarrow 0$,  the characteristic time for the diffusion process becomes smaller than the membrane relaxation time,  thus we are in the quenched disorder regime where $\frac{\partial {\eta}_k}{\partial t} \approx 0$.  As $\chi \rightarrow \infty$ the characteristic diffusion time becomes much slower than the membrane relaxation time and thus we are in the annealed regime.  

\section{Case 0: Diffusion on static, periodically varying surfaces}
\label{sec:static}
In this section we consider the first case described in Section \ref{sec:model}, namely lateral diffusion on a prescribed static surface with periodic fluctuations about the plane.  More specifically, we  consider a surface $S^\epsilon$ with Monge gauge:
	\begin{equation}
		\label{eq:h}
		h^\epsilon(x) = \epsilon h\left(\frac{x}{\epsilon}\right),
	\end{equation}
where $\epsilon$ is a small scale parameter and $h$ is a sufficiently smooth real-valued function on $\R^d$ such that $h$ and its derivatives are periodic with period $1$ in every direction.  The excess surface area is conserved as $\epsilon \rightarrow 0$, which suggests that the scaling in (\ref{eq:h}) is justified,  see \cite{duncan2013thesis} for further details.  It is straightforward to see that $S^\epsilon$ has metric tensor $g^\epsilon(x) = g(x/\epsilon)$, where
\begin{equation}
	g(x) = I + \nabla h(x) \otimes \nabla h(x), 	\qquad x \in \T^d.
\end{equation}
Consider a particle diffusing along the surface $S^\epsilon$ and let  $X^\epsilon(t)$ denote the projection onto the plane. Following the derivation in Section \ref{sub:bm} with $H(x, t)= h(x)$, independent of time,   the evolution of $X^\epsilon(t)$ is given by the following It\^{o} SDE
\begin{equation}
	\label{eq:periodic_sde}
	dX^\epsilon(t) = \frac{1}{\epsilon}F(X^\epsilon(t)/\epsilon) \,dt + \sqrt{2\Sigma(X^\epsilon(t)/\epsilon)}\,dB(t),
\end{equation}
where $$F(x) = \frac{1}{\sqrt{\norm{g}(x)}}\nabla\cdot\left(\sqrt{\norm{g}(x)}g^{-1}(x)\right),$$
and $$\Sigma(x) = g^{-1}(x).$$
\\
Equivalently, we can consider the diffusion equation for an observable $u^\epsilon(t,x)$ diffusing laterally on a surface $h^\epsilon(t)$ which can be written in local coordinates
\begin{subequations}
\label{eq:periodic_pde}
\begin{align}
	\frac{\partial u^\epsilon(x,t)}{\partial t} &= \mathcal{L}^\epsilon u^\epsilon(x,t), 	\quad &(x,t) \in \R^d \times (0,T], \\
	u^\epsilon(t,x ) &= u(x),	\quad &(x,t) \in \R^d\times\lbrace 0 \rbrace.
\end{align}
\end{subequations}
where 
\begin{equation}
	\mathcal{L}^\epsilon f(x) = \frac{1}{\sqrt{\norm{g}(x/\epsilon)}}\nabla_x\cdot\left(\sqrt{\norm{g}(x/\epsilon)})g^{-1}(x/\epsilon)\nabla_x f(x)\right),
\end{equation}
and where $u \in C_b(\R^d)$.  The process $X^\epsilon(t)$ and $f^\epsilon(x,t)$ are connected via the  backward Kolmogorov equation  \cite[Chapter 6]{friedman1975stochastic}.
\\\\
Our objective is to study the effective behaviour of $X^\epsilon(t)$ and $u^\epsilon(x,t)$ as $\epsilon \rightarrow 0$.  We will show that as $\epsilon\rightarrow 0$, the $\R^d$-valued process $X^\epsilon(t)$ will converge weakly to a Brownian motion on $\R^d$ with constant diffusion tensor $D$ which depends on the surface map $h(x)$.  Equivalently,  we show that $u^\epsilon$ converges pointwise to the solution $u^0$ of the PDE:

\begin{subequations}
\label{eq:periodic_eff_pde}
\begin{align}
	\frac{\partial u^0(x,t)}{\partial t} &= D:\nabla_x\nabla_x u^0(x,t), 	\quad &(x,t) \in \R^d \times (0,T], \\
	u^0(t,x ) &= v(x),	\quad &(x,t) \in \R^d\times\lbrace 0 \rbrace.
\end{align}
\end{subequations}
 Since (\ref{eq:periodic_sde}) (respectively (\ref{eq:periodic_pde})) is a SDE (resp. PDE) with periodic coefficients, the problem is amenable to classical periodic homogenisation methods, such as those of \cite{bensoussan1978asymptotic,zhikov1994homogenization,pavliotis2008multiscale}.  In Section \ref{sec:static_homog} we state the homogenisation result for this model.   The result will be justified formally by using perturbation expansions of the PDE in (\ref{eq:periodic_pde}).  The rigorous proof of this result is presented in \cite{duncan2013thesis}.
% In one dimension, $D$ depends only on the excess surface area $Z$ (i.e. the average ratio of the surface area of the graph of $h$ to the area of the base). Indeed, we will show that $D = \frac{1}{Z^2}$.  For $d \geq 2$, in general, $D$ has no closed-form expression and depends of a \emph{corrector}, the solution of an auxiliary PDE with periodic boundary conditions, known as the \emph{cell problem} or \emph{Poisson problem}.   Without making further assumptions on the surface we can obtain at best upper and lower bounds in terms of the function $h$ which are derived in Section \ref{sec:static_properties}. In Section \ref{sec:case1_duality_transformation}, making use of a duality transformation argument we are able to show that in two-dimensions $D$ is equal to the area scaling estimate $\frac{1}{Z} \mathbf{I}$, provided $D$ is isotropic. 
% \\\\
% The question that remains is when the area scaling estimate holds.  In Section \ref{sec:static_isotropy} we identify a natural symmetry condition for the surface map $h$ which is sufficient to ensure that $D$ is isotropic, and thus that the area scaling approximation holds.   In Section \ref{sec:periodic_numeric_scheme} we describe the implementation of a  piecewise linear finite element scheme for approximating the effective diffusion tensor, and in Section \ref{sec:static_numerics} we use this scheme to provide numerical examples of the theory underlined in the previous sections.

\subsection{The Homogenization Result}
\label{sec:static_homog}
For convenience, we introduce the fast process $Y(t) = \frac{X(t)}{\epsilon} ~ \mathbf{mod} ~ \T^d$.  We can then express (\ref{eq:periodic_sde}) as the following fast-slow system
\begin{subequations}
\label{eq:sde2}
\begin{align}
	dX^{\epsilon}(t) &= \frac{1}{\epsilon}F(Y^\epsilon(t))\,dt + \sqrt{2 \Sigma ( Y^{\epsilon}(t)}\, dB(t),\\
	dY^{\epsilon}(t) &= \frac{1}{\epsilon^2}F(Y^\epsilon(t))\,dt + \sqrt{\frac{2}{\epsilon^2} \Sigma ( Y^{\epsilon}(t)})\, dB(t),
\end{align}
\end{subequations}
where $X^\epsilon(t) \in \R^d$, $Y^\epsilon(t) \in \T^d$ and $B(t)$ is a standard Brownian motion on $\R^d$. The infinitesimal generator of the fast process is the $L^2(\T^d)$ closure of 
\begin{equation}
\label{eq:static_L0}
	\mathcal{L}_0 f(y) = \frac{1}{\sqrt{\norm{g}(y)}}\nabla_{y}\cdot\left(\sqrt{\norm{g}(y)}g^{-1}(y)\nabla_y f(y)\right), \quad f \in C^2(\T^d).
\end{equation}
It is straightforward to see that $\mathcal{L}_0$ is a uniformly elliptic operator with null\-space containing only constants, that is $$\mathcal{N}[\mathcal{L}_0] = \lbrace \mathbf{1} \rbrace,$$ and $$\mathcal{N}[\mathcal{L}^*_0] = \lbrace \rho(y) \rbrace,$$
where $\rho(y) = \frac{\sqrt{\norm{g}(y)}}{Z}$, for $Z = \int_{\T^d}\sqrt{\norm{g}(y)}\, dy$.
\\\\
We expect to be able to compute the homogenising effect of the fast process $Y^{\epsilon}$ on the slow process $X^{\epsilon}$ and thereby obtain an effective equation which accounts for, but removes explicit reference to, the small scale.  Given $v \in C^{2}_{b}(\R^{2}\times\T^{d})$, the observable $$v^{\epsilon}(x,y) := \mathbb{E}\left[v\left(X^{\epsilon}(t), Y^{\epsilon}(t)\right) \, | \, X^{\epsilon}(0) = x, Y^{\epsilon}(0) = 0\right]$$ satisfies the  backward Kolmogorov equation given by:
\begin{equation}
	\label{eq:kbe_ms}
	\begin{split}
			\frac{\partial v^\epsilon(x, y,t)}{\partial t} = \mathcal{L}^\epsilon v^{\epsilon}(x,y,t) &, \quad (x,y,t) \in \R^d\times\T^{d}\times(0,T],
	\end{split}
\end{equation}
where
\begin{equation}
\label{eq:Leps}
\mathcal{L}^\epsilon = \mathcal{L}_{2} + \frac{1}{\epsilon}\mathcal{L}_{1} + \frac{1}{\epsilon^{2}}\mathcal{L}_{0}
\end{equation}
for
\begin{equation}
\label{eq:l1}
\begin{split}
\mathcal{L}_{1}v(x,y) :=  F(y)\cdot \nabla_{x}v(x,y) + 2\Sigma(y):\nabla_{x}\nabla_{y}v(x,y),
\end{split}
\end{equation}
and 
\begin{equation}
\label{eq:l2}
\mathcal{L}_{2}v(x,y) := \Sigma(y):\nabla_{x}\nabla_{x}v(x,y),
\end{equation}
and where $\mathcal{L}_0$ is given by (\ref{eq:static_L0}).  Note that the last term in (\ref{eq:l1}) reflects the correlation of the noise between the fast and slow processes.  We wish to study the behaviour of $X^\epsilon$ and $v^\epsilon$ in the limit as $\epsilon \rightarrow 0$, homogenising over the fast variable $Y^\epsilon$ to identify a constant coefficient diffusion equation which approximates the slow process.    As the corresponding SDE and PDE have periodic coefficients, we  can apply results from classical homogenisation theory such as \cite{bensoussan1978asymptotic,zhikov1994homogenization} to prove convergence of $X^{\epsilon}$ and $v^{\epsilon}$ to solutions of limiting equations.  In this Section we will state the homogenisation result for this problem.
\\\\
The macroscopic effect of the fast scale fluctuations is characterised by a \emph{corrector} $\chi:\T^d \rightarrow \R^{d}$ which is the solution of the following Poisson problem:
\begin{equation}
\label{eq:celleqn_case1}
	\mathcal{L}_0 \chi(y) = -F(y),	\qquad y \in \T^d.
\end{equation}  

By the Fredholm alternative and elliptic regularity theory, \cite{gilbarg2001elliptic}, we have that (\ref{eq:celleqn_case1}) has a unique, mean zero solution $\chi \in C^2(\T^d; \R^d)$.
\\\\
The following theorem states the homogenisation result for this scaling regime.   A formal derivation using perturbation expansions will be given in Appendix \ref{sec:case1_app} which can be used as the basis for a rigorous proof. However, a probabilistic approach based on  \cite[Theorem 3.1]{pardoux1999homogenization} or \cite{bensoussan1978asymptotic} are more succint.   In what follows we will adopt the convention that $$\left(\nabla_{y}\chi(y)\right)_{ij} = \frac{\partial \chi_{i}}{\partial y_{j}}(y), \quad \mbox{ for } i, j \in 1,\ldots, d$$ see Chapter 2 of \cite{gonzalez2008first}.

\begin{theorem}
\label{thm:periodic_homog}
The process $X^{\epsilon}$ converges weakly in $C([0,T]; \R^{d})$ to a Brownian motion $X^0(t)$ with effective diffusion tensor $D$ given by 
\begin{equation}
\label{eq:eff_diff_matrix}
D = \frac{1}{Z}\int_{\T^{d}}\left(I + \nabla_{y}\chi(y)\right) g^{-1}(y)\left(I + \nabla_{y}\chi(y)\right)^{\top}\sqrt{\norm{g}(y)}\,dy,
\end{equation}
where $Z$ is the surface area of a single cell of the surface given by 
\begin{equation}
	\label{eq:Z}
	Z = \int_{\T^d}\sqrt{\norm{g}(y)}\,dy.
\end{equation}
 Moreover, if equation (\ref{eq:periodic_pde}) has initial data $u$ independent of the fast variable such that $u \in C^2_b(\R^d)$, then the solution $u^{\epsilon}$ of (\ref{eq:periodic_pde}) converges pointwise to the solution $u^{0}$ of (\ref{eq:periodic_eff_pde}) uniformly with respect to $t$ over $[0,T]$. \smartqed\qed
\end{theorem}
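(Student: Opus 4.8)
The plan is to reveal the structure of the limit via a two-scale perturbation expansion (carried out formally in Appendix \ref{sec:case1_app}) and then to establish weak convergence rigorously by a probabilistic corrector argument; the PDE statement will follow from the SDE statement through the backward Kolmogorov representation of $u^\epsilon$.

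First I would record the properties of the fast generator that make everything work. Since $h$ is smooth and $1$-periodic, $\nabla h$ is bounded on $\T^d$, so the eigenvalues of $g^{-1}(y)$ lie in $[(1+\|\nabla h\|_\infty^2)^{-1},1]$; hence $\mathcal{L}_0$ in (\ref{eq:static_L0}) is uniformly elliptic on $\T^d$ with smooth coefficients. By the Fredholm alternative its kernel is the constants, the kernel of $\mathcal{L}_0^*$ is spanned by $\rho(y)=\sqrt{\norm{g}(y)}/Z$, and the centering condition for the cell problem (\ref{eq:celleqn_case1}) holds because
\begin{equation*}
\int_{\T^d} F(y)\rho(y)\,dy = \frac{1}{Z}\int_{\T^d}\nabla\cdot\bigl(\sqrt{\norm{g}(y)}\,g^{-1}(y)\bigr)\,dy = 0
\end{equation*}
by periodicity; elliptic regularity then gives a unique mean-zero $\chi\in C^2(\T^d;\R^d)$.

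For the convergence itself I would use $\chi$ to cancel the $O(\epsilon^{-1})$ drift. The fast process $Y^\epsilon$ has generator $\epsilon^{-2}\mathcal{L}_0$, so applying It\^o's formula to $\epsilon\chi(Y^\epsilon(t))$ and using $\mathcal{L}_0\chi=-F$ gives
\begin{equation*}
d\bigl(X^\epsilon(t)+\epsilon\chi(Y^\epsilon(t))\bigr)=\sqrt{2}\,\bigl(I+\nabla_y\chi(Y^\epsilon(t))\bigr)\sqrt{\Sigma(Y^\epsilon(t))}\,dB(t),
\end{equation*}
whence $X^\epsilon(t)=X^\epsilon(0)+\epsilon\chi(Y^\epsilon(0))-\epsilon\chi(Y^\epsilon(t))+M^\epsilon(t)$ with $M^\epsilon$ a continuous martingale having $\langle M^\epsilon\rangle_t=2\int_0^t(I+\nabla_y\chi)\Sigma(I+\nabla_y\chi)^\top(Y^\epsilon(s))\,ds$. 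Because $\chi$ is bounded the corrector terms are $O(\epsilon)$ uniformly, and because $Y^\epsilon$ is a uniformly elliptic diffusion on $\T^d$ with invariant density $\rho$ the ergodic theorem gives $\langle M^\epsilon\rangle_t\to 2Dt$ in probability, uniformly on $[0,T]$, with $D$ precisely (\ref{eq:eff_diff_matrix}). The martingale central limit theorem (as in \cite{bensoussan1978asymptotic} or \cite{pardoux1999homogenization}) then yields $M^\epsilon\Rightarrow X^0$ in $C([0,T];\R^d)$ with $X^0$ a Brownian motion of covariance $2Dt$, and hence $X^\epsilon\Rightarrow X^0$. Positive-definiteness of $D$ follows since $\Sigma=g^{-1}$ is pointwise positive definite, so $\xi^\top D\xi=0$ would force $(I+\nabla_y\chi)^\top\xi\equiv 0$, i.e.\ $\nabla_y(\xi\cdot\chi)=-\xi$, which is impossible for $\xi\ne 0$ on $\T^d$ by periodicity; and the identity between this $D$ and the one coming from the formal $O(1)$ solvability condition (the $\rho$-average of $\Sigma_{jk}+F_j\chi_k+2(\Sigma\nabla_y\chi)_{jk}$) is obtained by integrating the latter by parts against $\sqrt{\norm{g}}$ using $\mathcal{L}_0\chi=-F$.

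Finally, for initial data $u\in C_b^2(\R^d)$ independent of the fast variable, $u^\epsilon(x,t)=\mathbb{E}[u(X^\epsilon_x(t))]$ solves (\ref{eq:periodic_pde}), and weak convergence $X^\epsilon_x\Rightarrow X^0_x$ with $u$ bounded continuous gives $u^\epsilon(x,t)\to u^0(x,t):=\mathbb{E}[u(X^0_x(t))]$, which solves (\ref{eq:periodic_eff_pde}); uniformity in $t\in[0,T]$ comes from an equicontinuity-in-$t$ bound for $u^\epsilon$ together with continuity of $u^0$. I expect the main technical obstacle to be the uniform-in-time ergodic control of $\langle M^\epsilon\rangle_t$ — equivalently, tightness of $X^\epsilon$ and identification of the limiting martingale problem — which is handled by solving an auxiliary Poisson equation for the integrand and applying It\^o's formula again; by contrast the well-posedness of the cell problem and the algebra reconciling the two forms of $D$ are routine consequences of the uniform ellipticity of $\mathcal{L}_0$.
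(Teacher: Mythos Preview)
Your proposal is correct and follows precisely the probabilistic corrector approach the paper itself recommends just before the theorem statement (citing \cite[Theorem 3.1]{pardoux1999homogenization} and \cite{bensoussan1978asymptotic}); the paper writes out only the formal multiscale expansion in Appendix \ref{sec:case1_app} and defers the rigorous argument to those references and to \cite{duncan2013thesis}. Your It\^o decomposition $X^\epsilon = X^\epsilon(0) + O(\epsilon) + M^\epsilon$, the ergodic identification of $\langle M^\epsilon\rangle_t$ with $2Dt$, and the appeal to the martingale CLT are exactly the ingredients of that approach, and your integration-by-parts reconciliation of the two forms of $D$ mirrors the computation at the end of Appendix \ref{sec:case1_app}.
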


\begin{remark}
In the second part of Theorem \ref{thm:periodic_homog} we assume that the initial condition of the backward Kolmogorov equation for this system depends only on the slow variable $x$.   While this assumption greatly simplifies the analysis,  it is not essential.  Indeed,  if the initial condition $u$ also depends on the fast variable, then an \emph{initial layer} arises at $t = 0$, which can be resolved by introducing auxiliary correction terms to the multiscale expansion which decay exponentially fast in time. Refer to \cite{khasminskii1996asymptotic, MR1948866} for a similar scenario.
\end{remark}

\subsection{Properties of the Effective diffusion tensor}
\label{sec:static_properties}

In the one dimensional case,  by integrating (\ref{eq:celleqn_case1}) directly, one can show \cite{pavliotis2008multiscale} that  the effective diffusion coefficient is given by $$D = \frac{1}{Z^{2}},$$ so that the homogenised equation (\ref{eq:periodic_eff_pde}) becomes
\begin{equation}
\frac{\partial u_{0}(x,t)}{\partial t} = \frac{1}{Z^{2}}\Delta u_{0}(x,t),	\quad (x,t)\in \R^{1}\times (0,T].
\end{equation}

In two-dimensions or higher it is not possible to solve for the corrector $\chi$ explicitly and thus $D$ has no closed form.  However, we can identify certain properties of the effective diffusion tensor.  Let $$H^{1}_{per}(\T^{d}) := \left\lbrace v \in H^{1}(\T^{d}) \, \Bigg| \, \int_{\T^{d}}v(y)\,dy = 0 \right\rbrace,$$ and $S^{d} = \lbrace e \in \R^{d+1} \, | \, |e|=1 \rbrace$.  The following proposition illustrates the basic properties of $D$, valid in all dimensions.
\\
\begin{proposition}
\label{prop:properties}
Let $e \in S^{d-1}$, then
\begin{enumerate}
	\item $D$ is a symmetric, positive definite matrix.
	\item $D$ can be characterised via the expression  
	\begin{equation}
		\label{eq:minimisation}
		e \cdot D e  = \inf_{v \in H^{1}_{per}(\T^{d})} L[v,e],
\end{equation}
where \begin{equation*}
	%\label{eq:functional}
	L[v,e] := \frac{1}{Z}\int_{\T^{d}}\left( e+ \nabla v(y)\right)\cdot g^{-1}( y)\left(e + \nabla v(y)\right)\sqrt{\norm{g}(y)}\,dy
\end{equation*}

	and $\chi^e = \chi_1 e_1 + \chi_2 e_2$ is the unique minimiser of (\ref{eq:minimisation}).
	\item The following Voigt-Reuss bounds \cite[Section 1.6]{zhikov1994homogenization} hold,
	\begin{equation*}
		e\cdot D_{*} e \leq e\cdot D e \leq e\cdot D^{*} e \end{equation*}
	where 
	\begin{equation}
\label{eq:bound_lower}
			D_{*} = \frac{1}{Z}e\cdot\left(\int_{\T^{d}}\frac{g(y)}{\sqrt{\norm{g}(y)}}\,dy\right)^{-1}e, 
	\end{equation}
and  \begin{equation}
		\label{eq:bound_upper}
		D^{*} = \frac{1}{Z}e\cdot \left(\int_{\T^{d}}g^{-1}( y)\sqrt{\norm{g}( y)}\,dy\right)e.
\end{equation}

\label{it:depleted}
\item In particular, since the microscopic diffusion tensor in the non\-dimensionalized diffusion equation (\ref{eq:sde2}) is $I$, the homogenised diffusion tensor $D$ is bounded from above by the molecular diffusion.
\end{enumerate}
\end{proposition}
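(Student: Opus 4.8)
The plan is to recognise the problem, after multiplying through by $\sqrt{\norm{g}}$, as a classical symmetric divergence-form periodic homogenisation problem, and then read off all four statements from standard theory. Since $F = \norm{g}^{-1/2}\nabla\cdot(\sqrt{\norm{g}}\,g^{-1})$, multiplying the cell equation (\ref{eq:celleqn_case1}) by $\sqrt{\norm{g}}$ shows that each component $\chi_i$ satisfies $\nabla\cdot\bigl(\sqrt{\norm{g}}\,g^{-1}(\nabla\chi_i+e_i)\bigr)=0$ on $\T^d$; equivalently, this is the cell problem for the symmetric, uniformly elliptic conductivity field $A(y):=\sqrt{\norm{g}(y)}\,g^{-1}(y)$ (uniform ellipticity holds here because $\norm{\nabla h}$ is bounded on the compact $\T^d$). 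With the convention $(\nabla\chi)_{ij}=\partial_j\chi_i$ one checks $(I+\nabla\chi)^\top e = e+\nabla\chi^e$ for $\chi^e:=\sum_i e_i\chi_i$, so that (\ref{eq:eff_diff_matrix}) becomes $e\cdot D e = Z^{-1}\int_{\T^d}(e+\nabla\chi^e)\cdot g^{-1}(e+\nabla\chi^e)\sqrt{\norm{g}}\,dy$, i.e. $D = Z^{-1}\bar A$ with $\bar A$ the homogenised matrix of $A$. I would carry this identity through the rest of the proof.

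\emph{Claims 1 and 2.} Symmetry of $D$ is immediate from this identity and symmetry of $g^{-1}$, and $e\cdot D e\ge 0$ because $g^{-1}$ is positive definite and $\sqrt{\norm{g}}>0$ (the bounds after (\ref{eq:surface_area})); if $e\cdot De=0$ then $\nabla\chi^e=-e$ a.e., which is impossible for periodic $\chi^e$ unless $e=0$, giving strict positive definiteness. For the variational characterisation, $L[\cdot,e]$ is strictly convex and coercive on $H^1_{per}(\T^d)$ (uniform ellipticity plus a Poincar\'{e} inequality), so it has a unique minimiser whose first variation vanishes: $\int_{\T^d}(e+\nabla v)\cdot g^{-1}\nabla\phi\,\sqrt{\norm{g}}\,dy=0$ for all $\phi\in H^1_{per}(\T^d)$ — exactly the weak form of $\nabla\cdot(\sqrt{\norm{g}}g^{-1}(e+\nabla v))=0$, whose unique mean-zero solution is $\chi^e$. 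Hence the minimiser is $\chi^e$ and, substituting, $\inf_v L[v,e]=L[\chi^e,e]=e\cdot De$. (Equivalently: $L[\chi^e+\phi,e]=e\cdot De + Z^{-1}\int_{\T^d}\nabla\phi\cdot g^{-1}\nabla\phi\,\sqrt{\norm{g}}\,dy$, the cross term vanishing by the weak form and the remainder being $\ge 0$ with equality only for $\phi$ constant.)

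\emph{Claims 3 and 4.} The upper bound follows from (\ref{eq:minimisation}) with the competitor $v\equiv 0$: $e\cdot De\le L[0,e]=Z^{-1}\int_{\T^d}e\cdot g^{-1}(y)e\,\sqrt{\norm{g}}\,dy = D^*$. The lower bound is the complementary (harmonic-mean) Voigt–Reuss estimate \cite[Section~1.6]{zhikov1994homogenization}, $\bar A^{-1}\le\langle A^{-1}\rangle$, obtained from the dual variational principle $e\cdot\bar A^{-1}e=\inf\{\int_{\T^d}\sigma\cdot A^{-1}\sigma\,dy:\nabla\cdot\sigma=0,\ \langle\sigma\rangle=e\}$ with the constant competitor $\sigma\equiv e$; since $A^{-1}=g/\sqrt{\norm{g}}$, dividing by $Z$ gives $e\cdot De\ge Z^{-1}e\cdot\bigl(\int_{\T^d}g(y)/\sqrt{\norm{g}(y)}\,dy\bigr)^{-1}e = D_*$. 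For Claim 4 I would use the Sherman–Morrison identity $g^{-1}=I-\frac{\nabla h\otimes\nabla h}{1+\norm{\nabla h}^2}$ together with $\sqrt{\norm{g}}=\sqrt{1+\norm{\nabla h}^2}$, so that
\[
 e\cdot g^{-1}(y)\,e\,\sqrt{\norm{g}(y)} = \sqrt{1+\norm{\nabla h}^2}-\frac{(e\cdot\nabla h)^2}{\sqrt{1+\norm{\nabla h}^2}} \le \sqrt{1+\norm{\nabla h}^2};
\]
integrating over $\T^d$ and dividing by $Z=\int_{\T^d}\sqrt{1+\norm{\nabla h}^2}\,dy$ yields $D^*\le 1$, hence $e\cdot De\le 1$ for all $e\in S^{d-1}$, i.e. $D\le I$, the molecular diffusion tensor appearing in (\ref{eq:sde2}).

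The main obstacle is essentially bookkeeping rather than anything deep: once the reduction to standard homogenisation with conductivity $A=\sqrt{\norm{g}}\,g^{-1}$ is made, the variational characterisation and the Voigt–Reuss bounds are textbook, and the only care required is tracking the $Z$-normalisation relating $D$ to $\bar A$, carrying the area weight $\sqrt{\norm{g}}$ through all the integration-by-parts steps, and invoking the complementary (flux) principle rather than the primal one for the lower bound. The subtlest elementary point is the Sherman–Morrison computation in Claim 4, which shows that the arithmetic-mean bound $D^*$ is itself dominated by $I$ — a fact that would fail for a generic conductivity but holds here precisely because $A$ arises from a graph.
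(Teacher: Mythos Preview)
Your proposal is correct and follows essentially the same route as the paper: the variational characterisation via the Euler--Lagrange equation, the upper bound via the competitor $v\equiv 0$, and the bound $D^*\le I$ via $e\cdot g^{-1}e\le 1$ (the paper invokes this eigenvalue bound directly, while you derive it through Sherman--Morrison, but the content is identical).

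The one genuine difference is the lower bound. The paper does not invoke the dual (flux) variational principle; instead it simply relaxes the primal problem, enlarging the admissible class from gradient fields $\nabla v$ to all mean-zero $L^2$ vector fields $\Phi$, so that
\[
e\cdot D e = \inf_{v\in H^1_{per}} L[v,e] \;\ge\; \inf_{\substack{\Phi\in L^2(\T^d)^d\\ \int\Phi=0}} \frac{1}{Z}\int_{\T^d}(e+\Phi)\cdot g^{-1}(e+\Phi)\sqrt{\norm{g}}\,dy,
\]
and then computes this unconstrained quadratic minimum explicitly via its Euler--Lagrange equation $g^{-1}(e+\Phi)\sqrt{\norm{g}}=C$, yielding $D_*$. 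Your dual-principle argument and the paper's relaxation are of course equivalent (both give the harmonic-mean bound), but the paper's version is slightly more self-contained since it avoids stating the complementary energy principle.
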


\begin{proof}
Property (ii) follows by noting that the Euler-Lagrange equation for the  minimiser of (\ref{eq:minimisation}) is given by (\ref{eq:celleqn_case1}) which has a unique solution $\chi\cdot e$ in $H^{1}_{per}(\T^{d})$.  Moreover, $D = L[\chi^e, e]$.
It follows that for each  unit vector $e \in \R^{d}$, 
\begin{equation}
	\notag
	 D \leq L[0,e] = \frac{1}{Z}e \cdot\left(\int_{\T^{2}}g^{-1}(y)\sqrt{\norm{g}(y)} \,dy\right)e =: e\cdot D^{*}e,
\end{equation}
proving the second inequality of (iii).   To derive the Voigt-Reuss type lower bound in (iii) we note that for fixed $e\in S^{d-1}$, 
\begin{equation}
	\notag
D_{*} :=\inf_{\substack{\Phi \in L^{2}(\T^{d})^{d}\\  \int_{\T^{d}}\Phi(y)\,dy = 0}}\frac{1}{Z}\left(e + \Phi(y) \right)\cdot g^{-1}(y)\left(e + \Phi(y)\right)\sqrt{\norm{g}(y)}\,dy \leq D.
\end{equation}
The corresponding Euler-Lagrange equation is given by 
%(\cite{allaire1997shape} Section 1.1.3)
\begin{equation}
	\notag
	g^{-1}(y)\left(e + \Phi(y)\right)\sqrt{\norm{g}(y)} = C,
\end{equation}
where $C$ is a Lagrange multiplier for the constraint $\int_{\T^{d}}\Phi(y)\,dy = 0$, this can be solved explicitly to show that
\begin{equation}
	\notag
	\Phi(y) + e = \frac{g(y)}{\sqrt{\norm{g}(y)}}\left(\int_{\T^{d}}\frac{g(y)}{\sqrt{\norm{g}(y)}}\right)^{-1}e,
\end{equation}
so that 
\begin{equation}
	\notag
D_{*} = \frac{1}{Z}e\cdot\left(\int_{\T^{2}}\frac{g(y)}{\sqrt{\norm{g}(y)}}\,dy\right)^{-1}e,
\end{equation}
thus proving (iii).  Moreover,  the positive-definiteness of $D$ follows immediately from that of $D_{*}$.  Using the fact that $\norm{g^{-1}} \leq 1$, it follows that
\begin{equation}
	\notag
	e\cdot D e \leq e\cdot D^{*}e \leq 1,
\end{equation}
and thus proving (iv).   The symmetry of $D$ follows from the symmetry of the inverse metric tensor, proving (i).
\end{proof}

By using expression (\ref{eq:minimisation})  it is possible to obtain variational bounds for $D$ other than $D^{*}$ by minimising over a proper closed subset of $H^{1}_{per}(\T^{d})$.  By minimising over larger subsets it is possible to obtain increasingly tighter bounds (see \cite[Chapter 4]{duncan2013thesis}.  However we have not been able to obtain bounds which are consistently tight over different periodic surfaces using this approach.

\subsection{The Area Scaling Approximation}
\label{sec:case1_duality_transformation}
For surface fluctuations which are genuinely two-dimensional (i.e. not constant along a particular axis) we cannot expect to find an explicit expression for the solution of the cell equation.   Nonetheless, for a large class of two dimensional surfaces, using a \emph{duality transformation} argument we  are able to exploit symmetries which exist exclusively in the two dimensional case to obtain an explicit expression for the effective diffusion tensor, namely the area scaling approximation 
\begin{equation}
\label{eq:D_area_scaling}
D_{as} = \frac{1}{Z} \mathbf{I},
\end{equation}
 where $Z$ is the excess surface area.  Therefore,  provided $D$ is isotropic, it will depend only on the average excess surface area and not on the particular microstructure of the rough surface.
\\\\
In their simplest forms, duality transformations provide a means of relating the effective diffusion tensor $\sigma_*$ obtained through homogenising an elliptic PDE of the form 
\begin{align*}
	-\nabla\cdot \left(\sigma\left(\frac{x}{\epsilon}\right)\nabla v^\epsilon(x)\right) &= f, \qquad x \in \Omega \Subset \R^2 \\
	v^\epsilon(x) &= 0, \qquad x \in \partial \Omega,
\end{align*}
the effective diffusion tensor $\sigma_*'$ of a dual problem, where $\sigma' = Q^\top \sigma Q$ for a $90^\circ$ rotation $Q$.  The existence of such a duality depends strongly on the fact that in two dimensions the $90^\circ$ rotation of a divergence-free field is curl-free and vice versa. Such transforms were used firstly in conductance problems in  \cite{keller1963conductivity} and subsequently by \cite{matheron1967elements,dykhne1971conductivity,mendelson1975theorem,kohler1982bounds}.

\begin{proposition}
\label{prop:ema}
In two dimensions, $D$ satisfies the following relationship 
\begin{equation}\label{eq:det}\det(D) = \frac{1}{Z^2}.\end{equation}
Consequently, if $\lambda_1$ and  $\lambda_2$ are the eigenvalues of $D$ with $\lambda_1 \leq \lambda_2$, then
\begin{equation}
	\label{eq:det_relation}
	\frac{1}{Z^2} \leq \lambda_1 \leq \frac{1}{Z} \leq \lambda_2 \leq 1.
\end{equation}
In particular, if $D$ is isotropic, then $D$ can be written explicitly as 
		$$D = \frac{1}{Z}\mathbf{I}.$$
\end{proposition}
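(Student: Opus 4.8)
The plan is to exploit the two-dimensional duality sketched just before the statement. First I would recast the corrector problem \eqref{eq:celleqn_case1} in divergence form: multiplying through by $\sqrt{|g|}$ shows that the $i$-th component $\chi_i=\chi^{e_i}$ of the corrector solves $\nabla\cdot\bigl(A(y)\,(e_i+\nabla\chi_i(y))\bigr)=0$ on $\T^2$, where $A(y):=\sqrt{|g|(y)}\,g^{-1}(y)$. A short computation with the energy representation of \eqref{eq:eff_diff_matrix} then identifies $D=\tfrac1Z\,\bar A$, where $\bar A=\mathcal H(A)$ is the classical homogenised (divergence-form) matrix of the periodic coefficient $A$. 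The structural fact that makes the two-dimensional miracle work is $\det A(y)=|g(y)|\,\det g^{-1}(y)=1$ for all $y$; in particular, for the $90^\circ$ rotation $J=\left(\begin{smallmatrix}0&-1\\1&0\end{smallmatrix}\right)$ and any invertible symmetric $2\times2$ matrix $M$ one has $J M^{-1}J^\top=(\det M)^{-1}M$, so that $J A^{-1}J^\top=A$ pointwise.

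Next I would invoke the planar homogenisation duality relation $\mathcal H(JA^{-1}J^\top)=J\,\mathcal H(A)^{-1}\,J^\top$ (\cite[Section 1.6]{zhikov1994homogenization}). For completeness I would recall the argument: the flux $\xi_i:=A(e_i+\nabla\chi_i)$ is divergence-free with cell average $\bar A e_i$, hence its $90^\circ$ rotation $J\xi_i$ is curl-free with average $J\bar A e_i$; the corrected gradient $e_i+\nabla\chi_i$ is curl-free with average $e_i$, hence its rotation $J(e_i+\nabla\chi_i)$ is divergence-free with average $Je_i$; since $J\xi_i=(JA^{-1}J^\top)\,J(e_i+\nabla\chi_i)$, comparing averages identifies $J\bar A^{-1}J^\top$ as the homogenised matrix of the rotated medium $JA^{-1}J^\top$. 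Combined with $JA^{-1}J^\top=A$ this yields $\bar A=J\bar A^{-1}J^\top$; taking determinants (and using $\det J=1$) gives $(\det\bar A)^2=1$, and positivity from Proposition \ref{prop:properties}(i) forces $\det\bar A=1$. Therefore $\det D=Z^{-2}\det\bar A=Z^{-2}$, which is \eqref{eq:det}.

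For the refined inequalities \eqref{eq:det_relation}, the relation $\bar A=J\bar A^{-1}J^\top$ means $\bar A$ is similar to $\bar A^{-1}$, so its spectrum is a reciprocal pair $\{\mu,\mu^{-1}\}$ with (after relabelling) $\mu\ge1$; hence the eigenvalues of $D$ are $\lambda_2=\mu/Z\ge\lambda_1=1/(\mu Z)$. By Proposition \ref{prop:properties}(iv), $D\preceq I$, so $\mu/Z\le1$, i.e. $\mu\le Z$; noting also $Z\ge1$ (because $\sqrt{|g|}\ge1$ on $\T^2$, whose volume is $1$), we get $\lambda_2=\mu/Z\in[1/Z,1]$ and $\lambda_1=1/(\mu Z)\in[1/Z^2,1/Z]$, which is exactly \eqref{eq:det_relation}. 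If $D$ is isotropic then $\lambda_1=\lambda_2$, forcing $\mu=1$ and $D=\tfrac1Z\mathbf I$.

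The hard part will be making the planar duality step genuinely rigorous rather than formal: one must justify the interchange of divergence-free and curl-free fields under the rotation $J$ on the torus, track the cell averages correctly (including the orientation/sign bookkeeping hidden in $J^2=-I$), and verify that the rotated fields are admissible correctors for the dual coefficient $JA^{-1}J^\top$, so that $\mathcal H$ is genuinely being evaluated there. Everything downstream --- the determinant identity, the reciprocal-pair spectrum, the eigenvalue chain, and the isotropic collapse --- is then elementary linear algebra together with the already-established bound $D\preceq I$.
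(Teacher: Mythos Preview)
Your approach is exactly the duality-transformation route the paper points to (the paper's ``proof'' is just a reference to this standard argument and to \cite{duncan2013thesis}), and your downstream deductions --- $\det\bar A=1$, the reciprocal-pair spectrum, the chain \eqref{eq:det_relation}, and the isotropic collapse --- are all correct.

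One slip to fix in your sketch: the displayed relation $J\xi_i=(JA^{-1}J^\top)\,J(e_i+\nabla\chi_i)$ is wrong as written. From $\xi_i=A(e_i+\nabla\chi_i)$ one gets $J\xi_i=(JAJ^\top)\,J(e_i+\nabla\chi_i)$, not with $A^{-1}$. The correct way to read off the dual coefficient is to write the \emph{flux} of the dual problem in terms of its \emph{gradient}: $J(e_i+\nabla\chi_i)=(JA^{-1}J^\top)\,J\xi_i$, which identifies $A'=JA^{-1}J^\top$ as the dual medium. Either way you land on $\mathcal H(JA^{-1}J^\top)=J\mathcal H(A)^{-1}J^\top$, and since $\det A\equiv1$ gives $JA^{-1}J^\top=A$, you still obtain $\bar A=J\bar A^{-1}J^\top$ and hence $\det\bar A=1$. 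So the error is local and does not propagate; just correct the roles of flux and gradient in that line when you write it up.
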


\begin{proof}
	The above result follows from a straightforward modification of the standard duality transformation.  For a proof using variational principles see \cite{duncan2013thesis}.
	\smartqed \qed
\end{proof}

When $D$ is not isotropic, Proposition (\ref{prop:ema}) still provides us with useful constraints on the anisotropy of the effective diffusion tensor.   We see that if the macroscopic diffusion is unhindered in the direction corresponding to $\lambda_1$ then the effective diffusion will be $\frac{1}{Z^2}$ in the orthogonal direction, corresponding to a diffusion on a one-dimensional surface.  In the other extreme,  if $\lambda_1 = \lambda_2$ then we have an isotropic diffusion tensor and by the above proposition $\lambda_1 = \lambda_2 = \frac{1}{Z}$.

\subsection{A Sufficient Condition for Isotropy}
\label{sec:static_isotropy}
In all of the previous literature regarding lateral diffusion on two-dimensional biological membranes, it is always assumed that the macroscopic diffusion tensor is isotropic, i.e. a scalar multiple of the identity. While this is a natural assumption, it is clearly not true in general. In this section we identify a natural sufficient condition to guarantee the isotropy of the effective diffusion tensor.  The condition we will assume is the following:
\begin{equation}
	\label{cnd:isotropic}
	h(x) = h(Qx),	\quad x \in \R^{2},
\end{equation}
where $Q:\R^{2}\rightarrow \R^{2}$ is a $\frac{\pi}{2}$ rotation about some point $\mathbf{O} \in \R^{2}$. Without loss of generality we assume that $\mathbf{O} = (0,0)$.
\\
\begin{lemma}
	\label{lemm:metric_tensor_rotation}
	Let $Q \in \R^{2\times 2}$ be any rotation about the origin.  If (\ref{cnd:isotropic}) holds, then 
	\begin{equation}
		\label{eq:metric_tensor_condition}
				g^{-1}(Q\,x) = Q\,g^{-1}(x)Q^\top
	\end{equation}
	and 
	\begin{equation}
		\label{eq:area_element_condition}
				\norm{g}(Q\,x) = \norm{g}(x),
	\end{equation}
	for all $x \in \R^{2}$.
\end{lemma}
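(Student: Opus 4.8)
The plan is to compute the metric tensor of the rotated surface directly from its definition and to exploit the rotational symmetry hypothesis \eqref{cnd:isotropic}. Recall that $g(x) = I + \nabla h(x) \otimes \nabla h(x)$, so everything reduces to understanding how $\nabla h$ transforms under $x \mapsto Qx$. Differentiating the identity $h(x) = h(Qx)$ using the chain rule gives $\nabla h(x) = Q^\top \nabla h(Qx)$, equivalently $\nabla h(Qx) = Q \nabla h(x)$, where I have used that $Q$ is orthogonal so $(Q^\top)^{-1} = Q$.

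First I would substitute this into the formula for $g$ at the point $Qx$:
\begin{equation*}
g(Qx) = I + \nabla h(Qx) \otimes \nabla h(Qx) = I + \bigl(Q\nabla h(x)\bigr)\otimes\bigl(Q\nabla h(x)\bigr) = Q\bigl(I + \nabla h(x)\otimes\nabla h(x)\bigr)Q^\top = Q\,g(x)\,Q^\top,
\end{equation*}
using $Q Q^\top = I$ and the identity $(Qa)\otimes(Qb) = Q(a\otimes b)Q^\top$. Taking determinants and using $\det Q = 1$ (a rotation) gives $\norm{g}(Qx) = \det(Q g(x) Q^\top) = \det g(x) = \norm{g}(x)$, which is \eqref{eq:area_element_condition}. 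Inverting the relation $g(Qx) = Q g(x) Q^\top$ and again using orthogonality of $Q$ yields $g^{-1}(Qx) = Q g^{-1}(x) Q^\top$, which is \eqref{eq:metric_tensor_condition}. Since these computations hold for every $x \in \R^2$, the lemma follows.

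There is essentially no obstacle here: the only point requiring care is the direction of the chain rule (it is $\nabla h(Qx) = Q\nabla h(x)$, not $Q^\top$), and the observation that a rotation has unit determinant, which is what makes the area element genuinely invariant rather than merely scaled. The hypothesis \eqref{cnd:isotropic} is used exactly once, to produce the transformation rule for $\nabla h$; the rest is linear algebra with the rank-one update $\nabla h \otimes \nabla h$. Note also that the statement as given asks for an arbitrary rotation $Q$ about the origin, but \eqref{cnd:isotropic} only posits invariance under the specific $\pi/2$ rotation; the argument above in fact only uses that the given $Q$ satisfies $h = h \circ Q$, so one should read the lemma as: for the $Q$ of \eqref{cnd:isotropic} (and its powers), the stated identities hold.
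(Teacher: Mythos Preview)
Your proof is correct. The paper itself omits the proof of this lemma entirely (it simply states the result and marks it with a \qed), so there is nothing to compare against; your argument via the chain rule identity $\nabla h(Qx) = Q\nabla h(x)$ and the conjugation $g(Qx) = Q g(x) Q^\top$ is the natural one, and your closing remark about the statement really only applying to the specific $Q$ of \eqref{cnd:isotropic} (and its powers) is a fair observation.
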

\smartqed \qed
%\begin{proof}
%	Applying the chain rule, one can see that $h(Q\,x) = h(x)$ implies $$\nabla h({Q\,x}) = Q\nabla h(x).$$ Thus, we have that 
%	\begin{eqnarray*}
%		g(Q\, x) &=& I + Q\, \left(\nabla h(x) \otimes \nabla h(x)\right)\, Q^\top\\
%			    &=& Q\, g(x)\, Q^\top,			    
%	\end{eqnarray*}
%	from which (\ref{eq:metric_tensor_condition}) and  (\ref{eq:area_element_condition}) follow immediately.
%\end{proof}
%

We now prove that the above condition is a sufficient condition for the effective diffusion tensor to be isotropic.  The proof we present here is based on Schur's lemma \cite{james2001representations}.  A similar approach can be found in  \cite[Section 1.5]{zhikov1994homogenization}. 

\begin{theorem}
	\label{thm:isotropy_periodic}
	If condition (\ref{cnd:isotropic}) holds, then $D$ is isotropic.
\end{theorem}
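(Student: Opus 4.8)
The plan is to show that the corrector $\chi$ inherits the rotational symmetry of the surface, deduce that the effective tensor $D$ commutes with $Q$, and then conclude by an irreducibility (Schur's lemma) argument that $D$ is a multiple of the identity. The first step is to record how the operator $\mathcal{L}_0$ and the drift $F$ transform under the rotation. Using Lemma~\ref{lemm:metric_tensor_rotation} (which applies to both $Q$ and $Q^\top$, since (\ref{cnd:isotropic}) is equivalent to $h(x)=h(Q^\top x)$) together with the chain rule $\nabla(v\circ Q)=Q^\top\bigl((\nabla v)\circ Q\bigr)$, one checks that for every smooth $v$,
\begin{equation}
\notag
\mathcal{L}_0\bigl(v\circ Q\bigr)(y) = (\mathcal{L}_0 v)(Qy),
\end{equation}
the essential cancellations being $\sqrt{\norm{g}(Qy)}=\sqrt{\norm{g}(y)}$ and $Q\,g^{-1}(y)\,Q^\top = g^{-1}(Qy)$. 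Applying this identity with $v(z)=z_i$ and recalling that $F_i = \mathcal{L}_0 y_i$ (which holds because $g^{-1}$ is symmetric) gives the equivariance of the drift, $F(Qy)=Q\,F(y)$.

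Next I would set $\tilde\chi(y) := Q^\top\chi(Qy)$. Since $Q$ is a $\tfrac{\pi}{2}$ rotation it maps $\mathbb{Z}^2$ onto itself, so $\tilde\chi$ is again periodic on $\T^2$, and it has zero mean because $Q$ preserves Lebesgue measure on the torus. Combining the intertwining identity for $\mathcal{L}_0$ with $F(Qy)=Q\,F(y)$ yields
\begin{equation}
\notag
\mathcal{L}_0\tilde\chi(y) = Q^\top(\mathcal{L}_0\chi)(Qy) = -Q^\top F(Qy) = -Q^\top Q\, F(y) = -F(y),
\end{equation}
so $\tilde\chi$ solves the same Poisson problem (\ref{eq:celleqn_case1}). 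By uniqueness of the mean-zero solution, $\tilde\chi=\chi$, i.e. $\chi(Qy)=Q\,\chi(y)$; differentiating this identity (with the convention $(\nabla_y\chi)_{ij}=\partial\chi_i/\partial y_j$) gives $\nabla_y\chi(Qy)=Q\,\nabla_y\chi(y)\,Q^\top$, hence $I+\nabla_y\chi(Qy)=Q\bigl(I+\nabla_y\chi(y)\bigr)Q^\top$.

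Now substitute $y=Qz$ in the formula (\ref{eq:eff_diff_matrix}) for $D$. Using the last relation together with $g^{-1}(Qz)=Q\,g^{-1}(z)\,Q^\top$ and $\sqrt{\norm{g}(Qz)}=\sqrt{\norm{g}(z)}$ from Lemma~\ref{lemm:metric_tensor_rotation}, all the inner factors $Q^\top Q$ collapse to $I$ and the remaining copies of $Q$ and $Q^\top$ collect at the two ends of the integrand, giving $Z\,D = Q\,(Z\,D)\,Q^\top$, that is, $D=Q\,D\,Q^\top$; since $Q^\top=Q^{-1}$ this means $D$ commutes with $Q$. Finally, because $Q$ is a $\tfrac{\pi}{2}$ rotation its eigenvalues are $\pm i$, so it has no nontrivial invariant subspace of $\R^2$ and the cyclic group $\{I,Q,Q^2,Q^3\}$ acts irreducibly on $\R^2$. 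As $D$ is symmetric it is diagonalizable with mutually orthogonal eigenspaces, each of which is $Q$-invariant and hence must be $\{0\}$ or all of $\R^2$; therefore $D$ has a single eigenvalue $\lambda$ and $D=\lambda\mathbf{I}$. (Equivalently: the commutant of $Q$ in $\R^{2\times 2}$ is $\{a\mathbf{I}+bQ\}$, and the only symmetric elements are the multiples of $\mathbf{I}$ because $Q$ is antisymmetric — this is the Schur's-lemma statement.)

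The routine parts are the chain-rule bookkeeping in the intertwining identity and the change of variables in $D$. The step that requires the most care is verifying that $\tilde\chi$ genuinely solves the same cell problem: one must track the transposes of $Q$ correctly and confirm that both periodicity and the mean-zero normalization are preserved, so that uniqueness of the corrector can be invoked. A slightly shorter alternative avoids differentiating $\chi$: by the variational characterization in Proposition~\ref{prop:properties}(ii), the substitution $y=Q^\top z$ inside $L[u(Q^\top\,\cdot),Qe]$ (using $Q^\top g^{-1}(y)\,Q = g^{-1}(Q^\top y)$ and the invariance of $\sqrt{\norm{g}}$) shows $L[u(Q^\top\,\cdot),Qe]=L[u,e]$, hence $(Qe)\cdot D(Qe)=e\cdot De$ for all $e\in\R^2$; this forces $Q^\top D Q = D$ and then isotropy follows by the same Schur argument.
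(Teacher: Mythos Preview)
Your proof is correct. Your main route differs from the paper's: the paper works directly with the variational characterisation in Proposition~\ref{prop:properties}(ii), changing variables $y=Qz$ inside the infimum to obtain $e\cdot D e = e\cdot Q\,D\,Q^\top e$ for all unit vectors $e$, and then invoking Schur's lemma. You instead establish the equivariance of the corrector, $\chi(Qy)=Q\,\chi(y)$, via uniqueness of the mean-zero solution to (\ref{eq:celleqn_case1}), and only then substitute into the explicit formula (\ref{eq:eff_diff_matrix}) to obtain $D=Q\,D\,Q^\top$. The ``shorter alternative'' you sketch at the end is essentially the paper's proof verbatim. Your approach is a little longer (it requires the intertwining identity for $\mathcal{L}_0$, the drift equivariance $F(Qy)=Q\,F(y)$, and the periodicity/mean-zero check for $\tilde\chi$) but it yields more information: the corrector itself, not merely $D$, inherits the rotational symmetry, which is occasionally useful in its own right. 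The paper's route is more economical because the variational formulation absorbs $\chi$ implicitly and avoids all of that bookkeeping.
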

\begin{proof}
	We use a characterisation of $D$ given by (\ref{eq:minimisation}), namely
	\begin{equation}
	\notag
		e\cdot D e = \frac{1}{Z}\inf_{v \in H^{1}_{per}(\T^{2})}\left(e + \nabla v(y)\right)\cdot g^{-1}(y)\left(e + \nabla v(y)\right)\sqrt{\norm{g}(y)}\,dy.
	\end{equation}
	Changing variables $Q\,z = y$, using (\ref{eq:metric_tensor_condition}) and (\ref{eq:area_element_condition}),we obtain:
	\begin{equation}
	\notag
	\begin{split}
		& e \cdot D e = \frac{1}{Z}\inf_{v \in H^{1}_{per}(\T^{2})}\int_{\T^2}\left(e + \nabla v(Q\, z)\right)\cdot Q\,g^{-1}(z)Q^\top\left(e + \nabla v(Q\, z)\right)\sqrt{\norm{g}(z)}\,dz \\ 
		&= \frac{1}{Z}\inf_{v \in H^{1}_{per}(\T^{2})}\int_{\T^2}\left(Q^\top e + Q^\top\nabla v(Q\,z)\right)\cdot g^{-1}(z)\left(Q^\top e + Q^\top \nabla v(Q\, z)\right)\sqrt{\norm{g}(z)}\,dz.
	\end{split}
	\end{equation}
	Noting that $Q^\top\,\nabla v(Q\,z) = \nabla w(z),$ where $w(z) = v\circ Q(z)$, since $Q$ is a $\frac{\pi}{2}$ rotation, it is clear that $w \in H^{1}_{per}(\T^{2})$ if and only if $v \in H^{1}_{per}(\T^{2})$.  It follows that
	\begin{equation}
	\notag
		e\cdot D e = e\cdot Q\,D\,Q^\top\, e,
	\end{equation}
	for all $e \in S^{1}$.  Since $D$ is symmetric, it follows by Schur's lemma that $D$ is isotropic.
\end{proof}

\subsection{Numerical Method}
\label{sec:periodic_numeric_scheme}
\fxnote{This is new!}
To compute the effective diffusion tensor for a general two dimensional surface,  rather than adopt the MCMC approach as in \cite{pavliotis2007homogenization} of generating sample paths of $X^\epsilon(t)$ using an Euler scheme and estimating $D$ from the mean square deviation, we instead use a finite element scheme to solve the cell equation and compute  $D$ directly from (\ref{eq:eff_diff_matrix}).  The latter approach is prefereable in this case as, in two-dimensions, assembly of the corresponding linear system of equations is still cheap and the resulting matrix problem is relatively well conditioned,  so that it can be solved efficiently using a preconditioned conjugate gradient method.   On the other hand, a Monte Carlo approach requires long time simulations or many realisations of the SDE to recover $D$.   Finally,  the finite element scheme described in this section can be applied to compute the effective diffusion tensors in the fluctuating membrane model considered in Sections \ref{sec:caseI}, \ref{sec:caseII}, \ref{sec:caseIII} and \ref{sec:caseIV} whereas the corresponding Monte Carlo simulations would involve solving an extremely stiff system of equations over very long time intervals.
\\\\
The corrector $\chi$ is approximated numerically with piecewise linear elements to solve the cell problem (\ref{eq:celleqn_case1}).  For the approximation of $\chi$ we use a regular triangulation of the domain $[0,1]^2$ with mesh-width $h$.  To impose the periodic boundary conditions of (\ref{eq:celleqn_case1}) we identify the boundary nodes of the mesh periodically.  Thus, for $h = \frac{1}{M}$, $M \in \mathbb{N}$, the resulting finite element scheme has $M^2$ degrees of freedom.
\\\\
The stiffness matrix corresponding to the elliptic differential operator $\mathcal{L}_0$ is assembled using nodal quadrature \cite{larsson2009partial} to compute the local contribution of each triangular element to the stiffness matrix.  The load vector corresponding to the right hand side of the cell equation is computed similarly. Thus, the derivatives of the surface map $h_{x_1}$ and $h_{x_2}$ are evaluated only at the nodes of the mesh.   For simple surfaces,  the derivatives can be computed directly for each node.  The stiffness matrix $S$ corresponding to $\mathcal{L}_0$ is positive semi-definite, with kernel consisting of constant functions.  Since the right hand side of the finite element approximation of the cell equation is orthogonal to $S$, the corresponding matrix equation is solvable. 
\\\\
Once the stiffness matrix and right hand side have been assembled,  the resulting symmetric  matrix equation is solved using a preconditioned conjugated gradient method.  Given a piecewise linear approximation $\chi^{e_i}_h$ of the solution of the cell equation, we approximate $Z$ and the effective diffusion tensor $D$ using linear quadrature.  
We note that $D$ can be written as
$$
	e\cdot D e = \frac{1}{Z} \left[\int_{\T^d} e \cdot g^{-1}(y) e \sqrt{\norm{g}(y)}\,dy - \langle \chi^e, \chi^e\rangle_V\right],
$$
where $$\langle \chi^e, \chi^e\rangle_V = \int_{\T^d}\nabla \chi^e(y) \cdot g^{-1}(y)\nabla\chi^e(y)\sqrt{\norm{g}(y)}\,dy$$ is the energy norm of $\chi^e$.   Thus, using standard a priori error estimates \cite{brenner2008mathematical} for the finite element approximation $D_h$  one can show a rate of convergence $h^2$ of the approximate effective diffusion tensor $D_h$ to the exact value $D$. 

\subsection{Numerical Examples}
\label{sec:static_numerics}
To illustrate the properties described in the previous sections, we apply the numerical scheme of Section \ref{sec:periodic_numeric_scheme} to numerically compute the effective diffusion tensor for diffusions on various classes of surfaces, along with the bounds $D^{*}$, $D_{*}$ derived in Proposition \ref{prop:properties} as well as the area scaling estimate (\ref{eq:D_area_scaling}). 
\\\\
In Figure \ref{fig:eff1} we consider a surface defined by $h(x) = A \sin(2\pi x_1)\sin(2\pi x_2)$ for varying $A$.  The isotropy condition holds, so that $D$ is isotropic and is given by (\ref{eq:D_area_scaling}).  The Voigt-Reuss bounds are sharp in the weak disorder regime (small $A$) but become increasingly weak as $A$ increases, with $D^{*}$ approaching $\frac{1}{2}$ in the strong disorder limit (large $A$) while $D_{*}$ converges to $0$.  As predicted by Proposition \ref{prop:ema}, the area scaling approximation correctly determines the effective diffusion tensor. 
\\\\
In Figures  \ref{fig:eff2}  and \ref{fig:eff2_ev}  we consider the surface given by $$h(x) = \sin(2\pi x_1)\sin(6\pi x_2) + A \sin(6\pi x_1)\sin(2\pi x_2).$$ The effective diffusion tensor will not be isotropic except for $A = 1$.  In Figure \ref{fig:eff2} we plot $D_{11} := e_1 \cdot D e_1$ for varying $A$.  The Voigt-Ruess bounds $D^*$ and $D_*$ are not tight for any $A$ with $D^*$ converging to $0.5$ as $A \rightarrow \infty$.  We also see that the area scaling approximation (\ref{eq:D_area_scaling}) agrees for $A = 1$, at which $D$ is isotropic. In Figure \ref{fig:eff2_ev} we plot the maximal and minimal eigenvalues $D_{max}$ and $D_{min}$ of the effective diffusion tensor.  As predicted by (\ref{eq:det_relation}), $\frac{1}{Z}$ lies between $D_{max}$ and $D_{min}$, meeting at $A = 1$.
\\\\
In Figure \ref{fig:eff3} we consider a surface given by a periodic tiling of the standard ``bump" function with center $c = \left(\frac{1}{2}, \frac{1}{2}\right)$, radius $r = 0.45$ and amplitude $A$, that is
\begin{equation}
\label{eq:bump_function}
\begin{split}
	h({x}) = A\exp\left(-\frac{1}{1 - |\frac{{x} -c}{r}|^{2}}\right) ,&\qquad |{x} - c| \leq r	\\
	h({x}) = 0	&\qquad |{x}-c| > r.
\end{split}
	\end{equation}
 A plot of the corresponding multiscale surface for $\epsilon=\frac{1}{3}$ is shown in Figure \ref{fig:bump}.  It is clear that the symmetries of the surface fluctuations will induce an isotropic effective diffusion tensor.  We note from Figure \ref{fig:eff3} that for $A < 1.0$, the effective diffusion is not very sensitive to changes in amplitude, but that it rapidly diminishes as we increase $A$ beyond $2$. 

\begin{figure}[h!]
	\includegraphics[scale=0.45]{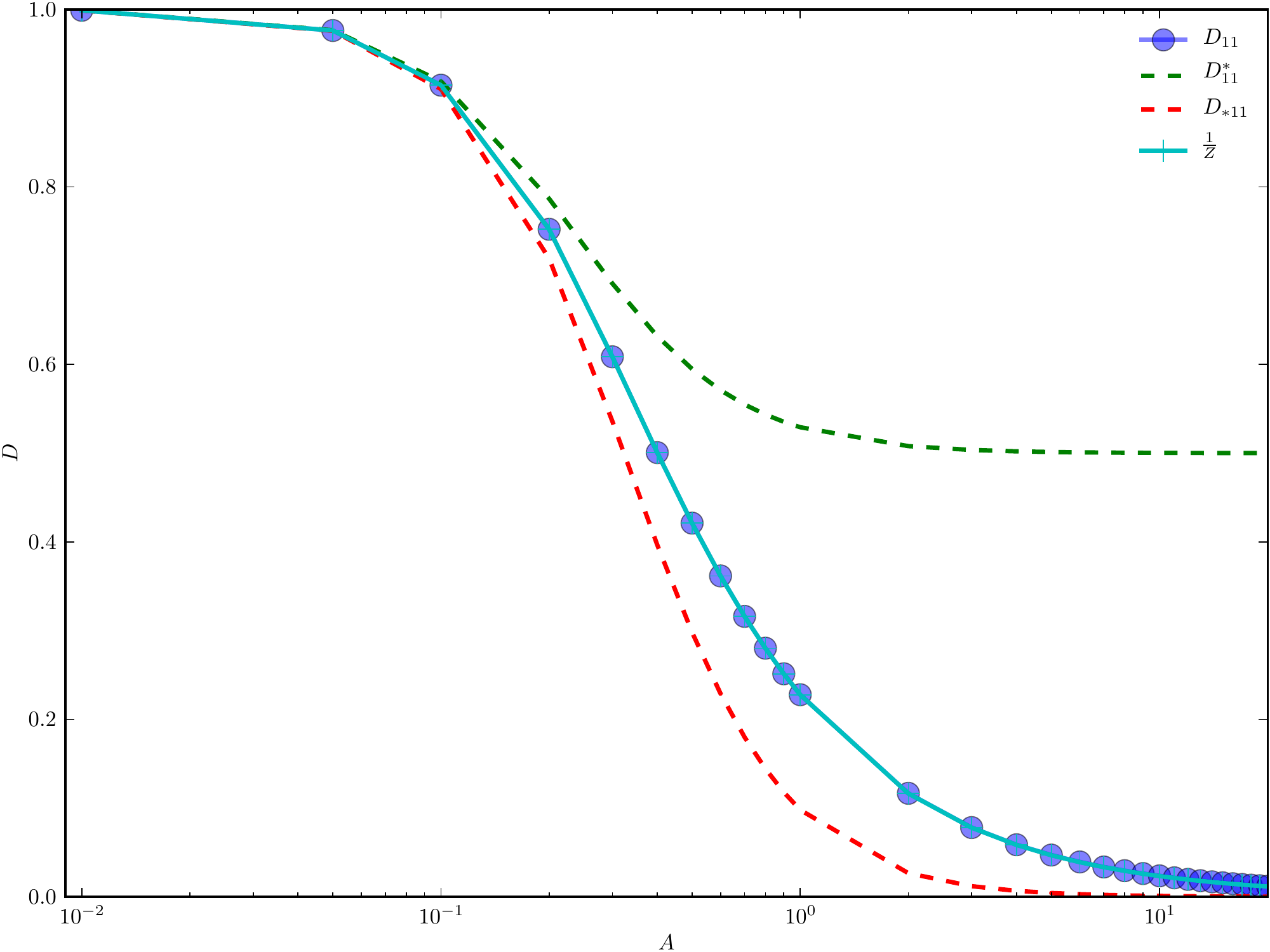}
	\caption[Effective diffusion tensor for a periodic bump type surface]{The effective diffusion tensor for an ``egg-carton" surface with Monge gauge $h(x) = A\sin(2\pi x_1)\sin(2\pi x_2)$ for varying $A$. The dots indicate computed values of $D$.  The dash-dotted line shows $D_{*}$ and the dashed line shows $D^{*}$.}
	\label{fig:eff1}
\end{figure}
\begin{figure}[h!]
	\includegraphics[scale=0.45]{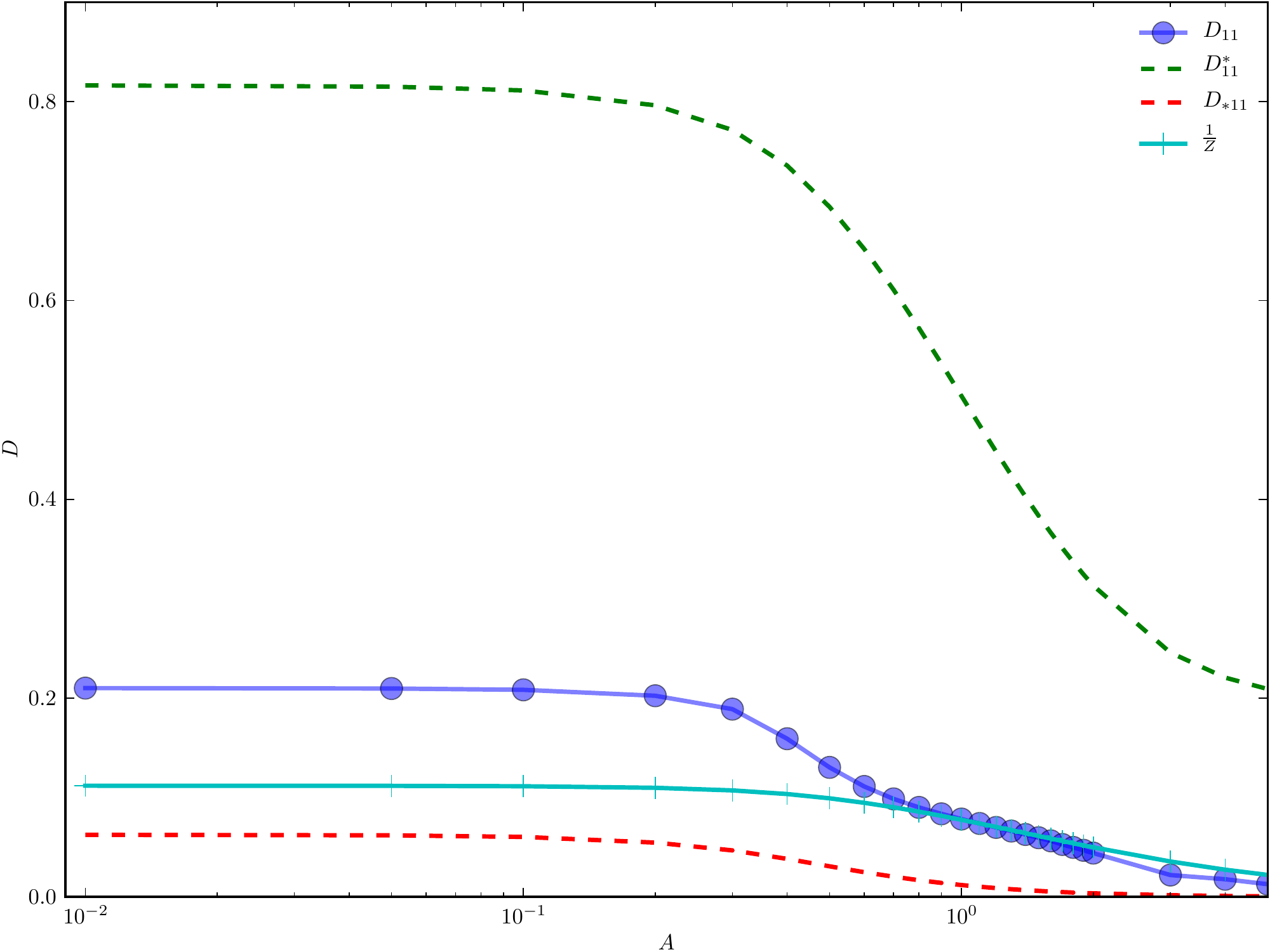}
	\caption[Effective diffusion tensor for a static, periodic non-symmetric surface.]{The effective diffusion tensor for $h(x) = \sin(2\pi x_1)\sin(6\pi x_2) + A  \sin(6\pi x_1)\sin(2\pi x_2)$.  We plot $D$ in the $e_1$ direction.  }
\label{fig:eff2}
\end{figure}
\begin{figure}[h!]
\includegraphics[scale=0.45]{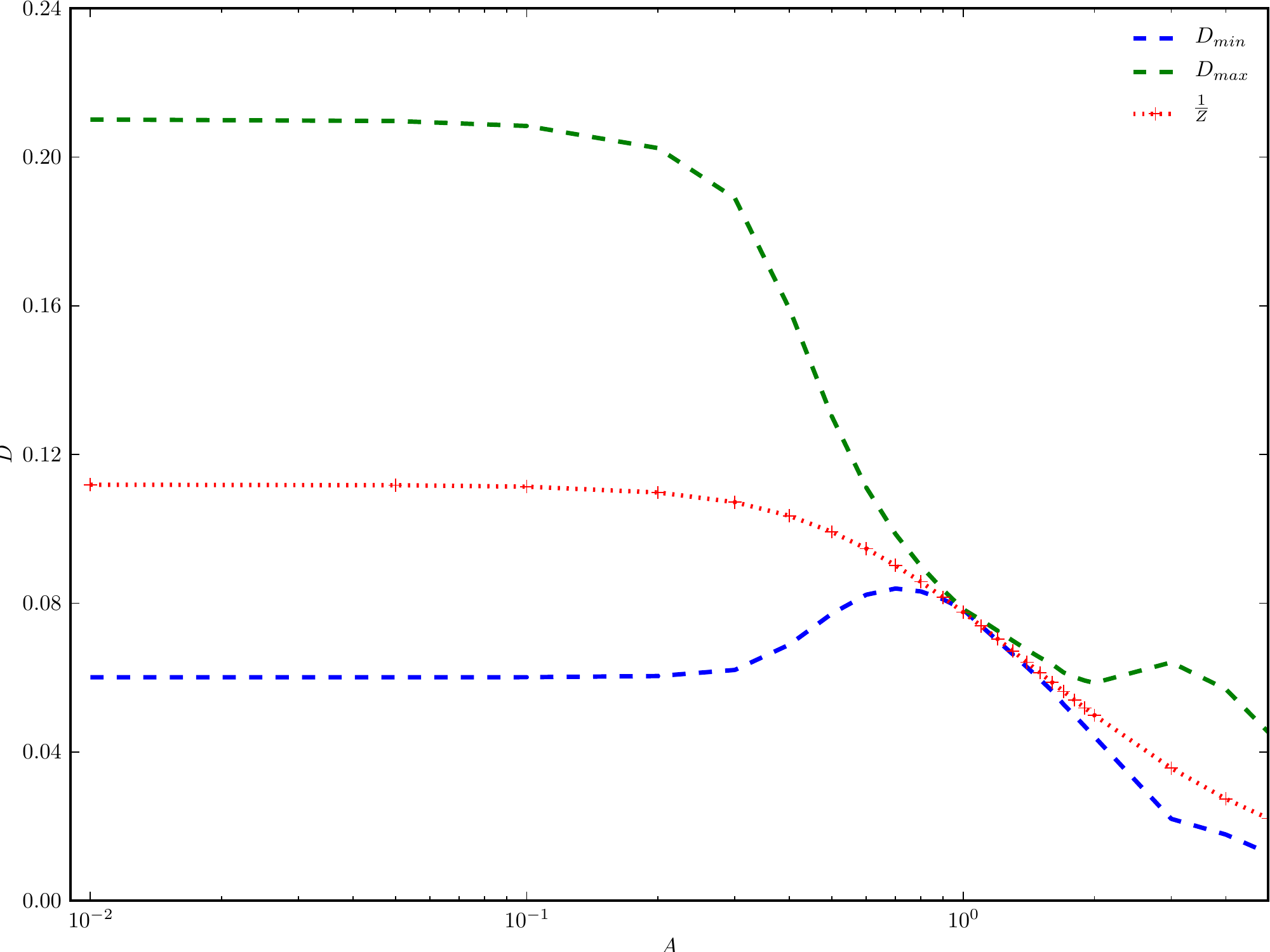}
\caption[Eigenvalues of $D$ for a static, periodic non-symmetric surface]{The effective diffusion tensor for $h(x) = \sin(2\pi x_1)\sin(6\pi x_2) + A  \sin(6\pi x_1)\sin(2\pi x_2)$. $D$ is  anisotropic except for $A = 1$.  The maximal and minimal eigenvalues of $D$, $D_{max}$ and $D_{min}$ respectively, are plotted along with the area scaling approximation $D_{as}$, illustrating the bound on the eigenvalues given by (\ref{eq:det_relation}).}
\label{fig:eff2_ev}
\end{figure}
\begin{figure}[h!]
	\includegraphics[scale=0.45]{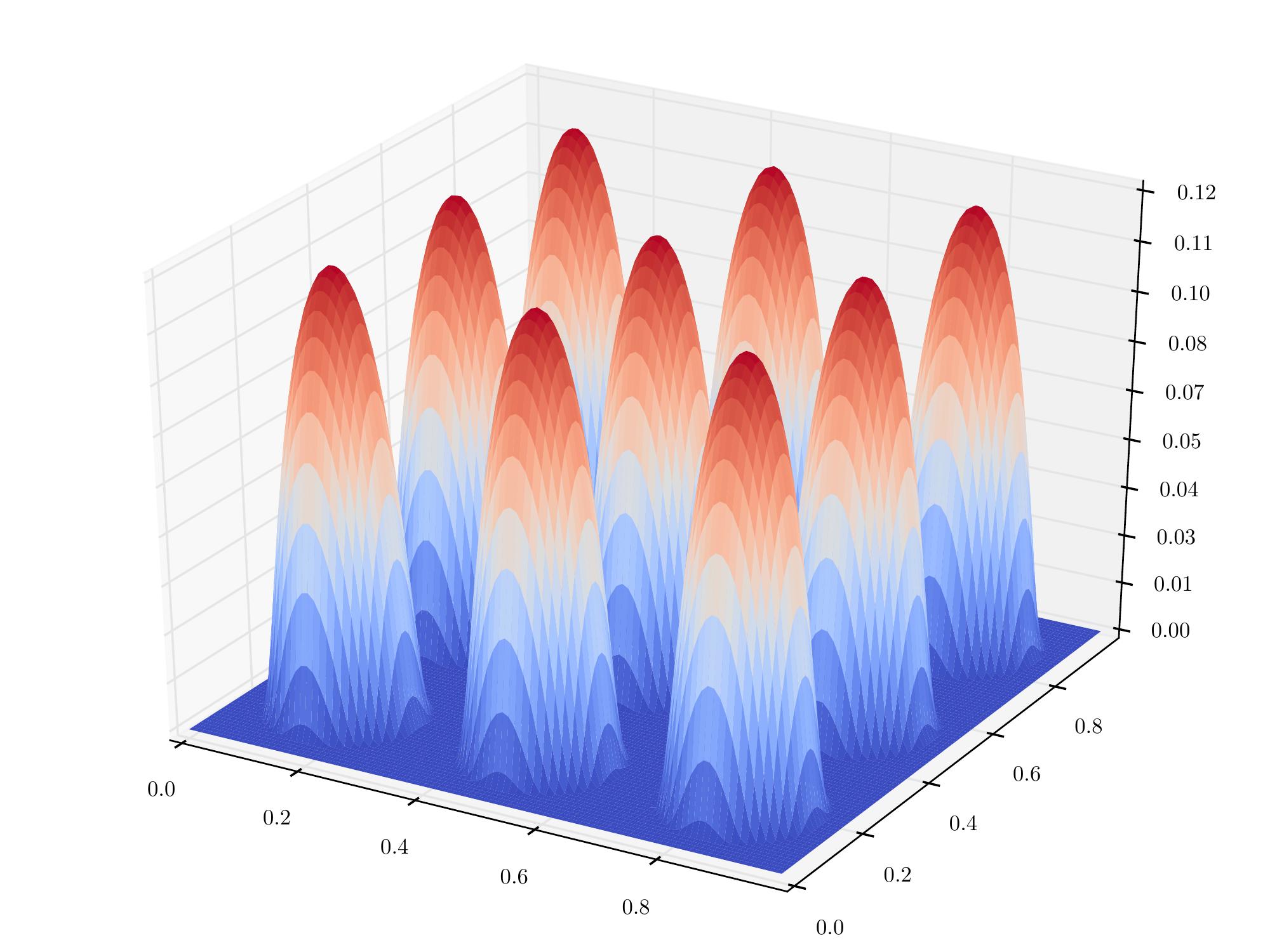}
	\caption[Plot of bump function]{A plot of the periodic bump surface with Monge Gauge $h^\epsilon(x)$ for $h(x)$ given by (\ref{eq:bump_function}) and with $\epsilon=\frac{1}{3}$.}
	\label{fig:bump}
\end{figure}

\begin{figure}[h!]
	\includegraphics[scale=0.45]{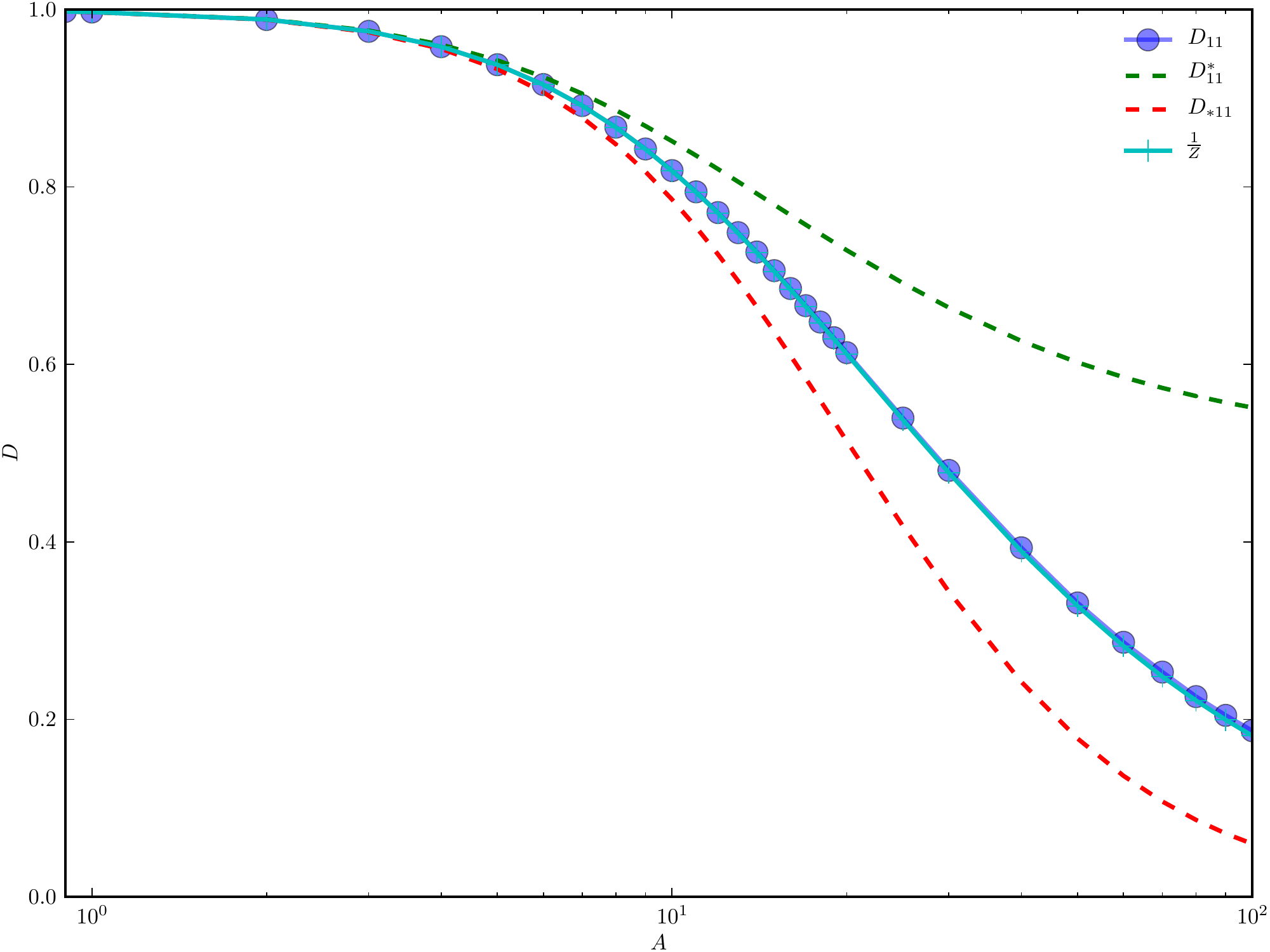}
	\caption[Effective diffusion tensor for a surface consisting of periodically tiled ``bumps".]{The effective diffusion for diffusion on a periodic surface, where each cell is a "bump" with width $0.45$ and amplitude $A$ given by the graph of (\ref{eq:bump_function}).}
	\label{fig:eff3}
\end{figure}

\section{Case I: Diffusion on a Surface with Quenched Fluctuations}
\label{sec:caseI}
We can apply the results of the previous sections to study the effective behaviour of the fluctuating membrane model (\ref{eq:periodic_sde}) in the Case I regime where $(\alpha, \beta) = (1, -\infty)$,  which models a particle diffusing laterally on a static surface obtained by a stationary realisation of the process $\eta(t)$ given in  (\ref{eq:nondim_diffusion_sde_coupled}).  This particular regime had been previously studied in \cite{naji2007diffusion} for lateral diffusion over a Helfrich elastic membrane with quenched fluctuations.  In this section we will study how the distribution of the surface realisation affects the averaged effective diffusion.  In Section \ref{sec:case1_helfrich} we focus on the specific case of a fluctuating Helfrich elastic surface.
\\\\
By homogenizing the diffusion process over a stationary realisation of the surface, we obtain a homogenized diffusion tensor $D(h)$ dependent on the particular realisation.  We will define the effective diffusion coefficient $\overline{D}$ to be the value of $D(h)$ averaged over all stationary surface realisations.
\\\\
Consider the stationary measure $\mu_{\eta}$ of the OU process given by (\ref{eq:mu_infty}).  Let $\mathbb{P}$  be the probability measure on $C(\T^2)$ given by the pushforward of $\mu_\eta$ under the map $P:\R^K \rightarrow C(\T^2)$ where
$$P(\zeta) = \zeta \cdot e = \sum_{k \in \mathbb{K}}\zeta_k e_k. $$
Then the effective diffusion tensor $\overline{D}$ is given by
\begin{equation}
	\label{eq:periodic_D_av}
	\overline{D} = \int D(h) \P(dh).
\end{equation}
The first result we show is an analogue of Proposition \ref{thm:isotropy_periodic}.

\begin{proposition}
	\label{prop:isotropy_stat_random}
 	Suppose $d= 2$  and let $Q \in \R^{2\times 2}$ be an orthogonal matrix, such that $Q \neq \pm I$.  Define $\mathcal{Q}:C(\T^2) \rightarrow C(\T^2)$ by $$\left(\mathcal{Q}f\right) = f(Q^\top \cdot).$$
	Suppose $\P$ is invariant with respect to $\mathcal{Q}$, that is, $\mathcal{Q}^{-1}\circ \P = \P,$
	then $\overline{D}$ is isotropic.
\end{proposition}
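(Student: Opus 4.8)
The plan is to run the argument of Theorem~\ref{thm:isotropy_periodic} ``under the integral sign''. Concretely, I would first establish a pathwise identity: for every realisation $h$ in the support of $\P$,
\begin{equation}
\label{eq:pathwise_rotation}
D(\mathcal{Q}h) = Q\,D(h)\,Q^{\top},
\end{equation}
where $(\mathcal{Q}h)(x) = h(Q^{\top}x)$. This is meaningful: since $\mathbb{K}$ is finite and each $e_k \in C^{\infty}(\T^2)$, every realisation is smooth, so $D(h)$ is defined by Theorem~\ref{thm:periodic_homog}, the map $h \mapsto D(h)$ is continuous, and $\norm{D(h)} \le 1$ by Proposition~\ref{prop:properties}, so the integral (\ref{eq:periodic_D_av}) defining $\overline{D}$ makes sense. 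Moreover, for $\mathcal{Q}$ to map $C(\T^2)$ into itself, $Q$ must be one of the eight orthogonal symmetries of the integer lattice, which in particular makes $x \mapsto Qx$ a measure-preserving bijection of $\T^2$; this is what licenses the change of variables below.

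To prove (\ref{eq:pathwise_rotation}) I would compute, via the chain rule, $\nabla\bigl(\mathcal{Q}h\bigr)(x) = Q\,(\nabla h)(Q^{\top}x)$, whence the metric tensor of $\mathcal{Q}h$ is $\tilde{g}(x) = Q\,g(Q^{\top}x)\,Q^{\top}$, so that $\tilde{g}^{-1}(x) = Q\,g^{-1}(Q^{\top}x)\,Q^{\top}$, $\norm{\tilde g}(x) = \norm{g}(Q^{\top}x)$ (using $(\det Q)^2 = 1$), and hence $Z(\mathcal{Q}h) = Z(h)$. Plugging these into the variational characterisation (\ref{eq:minimisation}) of $e\cdot D(\mathcal{Q}h)e$ and substituting $y = Qz$, exactly as in the proof of Theorem~\ref{thm:isotropy_periodic}, reduces the integrand to one involving $g^{-1}(z)$ and $\sqrt{\norm{g}(z)}$; then setting $w(z) := v(Qz)$, which satisfies $\nabla w(z) = Q^{\top}\nabla v(Qz)$ and ranges over $H^{1}_{per}(\T^2)$ (mean-zero constraint included) as $v$ does, one obtains
\begin{equation}
\notag
e\cdot D(\mathcal{Q}h)\,e = (Q^{\top}e)\cdot D(h)\,(Q^{\top}e) = e\cdot\bigl(Q\,D(h)\,Q^{\top}\bigr)e
\end{equation}
for all unit vectors $e$; since both tensors are symmetric, (\ref{eq:pathwise_rotation}) follows.

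The conclusion is then immediate: integrating (\ref{eq:pathwise_rotation}) against $\P$ and using the assumed invariance $\mathcal{Q}^{-1}\circ\P = \P$, which gives $\int \Phi(\mathcal{Q}h)\,\P(dh) = \int \Phi(h)\,\P(dh)$ for bounded measurable $\Phi$ (applied componentwise to $D$), yields
\begin{equation}
\notag
\overline{D} = \int D(h)\,\P(dh) = \int D(\mathcal{Q}h)\,\P(dh) = Q\left(\int D(h)\,\P(dh)\right)Q^{\top} = Q\,\overline{D}\,Q^{\top}.
\end{equation}
Thus $\overline{D}$ is symmetric and commutes with $Q$, and Schur's lemma (as at the end of the proof of Theorem~\ref{thm:isotropy_periodic}) then forces $\overline{D}$ to be a scalar multiple of the identity whenever $Q$ is a non-trivial rotation, which among the lattice symmetries means a quarter-turn; this is where $Q\neq\pm I$ is used.

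I expect the only real work to be in establishing (\ref{eq:pathwise_rotation}): one must be slightly careful that $\mathcal{Q}$ is well defined on $C(\T^2)$, that the change of variables $y = Qz$ preserves both periodicity and Lebesgue measure, and that the corrector/variational identity of Section~\ref{sec:static} applies realisation by realisation. Once that pathwise identity is in hand, the averaging step and the Schur argument are routine, having already appeared in Theorem~\ref{thm:isotropy_periodic}.
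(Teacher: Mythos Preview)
Your proposal is correct and follows essentially the same route as the paper: establish the pathwise identity $D(\mathcal{Q}h)=Q\,D(h)\,Q^{\top}$ via the variational characterisation and a change of variables, integrate against $\P$ using its $\mathcal{Q}$-invariance to get $\overline{D}=Q\,\overline{D}\,Q^{\top}$, and conclude by Schur's lemma. Your additional remarks---that $Q$ must be a lattice symmetry for $\mathcal{Q}$ to preserve $C(\T^2)$, and that the Schur step genuinely requires a non-trivial rotation rather than a reflection---are valid refinements that the paper glosses over.
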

\begin{proof}
	Let $h$ be a realisation of $\P$.  Similar to the proof of Proposition \ref{thm:isotropy_periodic}, we have that
	$$\nabla \left(\mathcal{Q} h\right)(x) = Q\, \nabla h(Q^\top\, x).$$
	Denoting by $g(x,h)$ the metric tensor for the graph of $h$ evaluated at $x \in \R^d$, that is $g(x, h)  := I + \nabla h(x) \otimes \nabla h(x)$, then
	$$g^{-1}(x, \mathcal{Q}h) = Q\, g^{-1}(Q^\top\, x, h)\,Q^\top$$
	and 
	$$ \norm{g}(x,\mathcal{Q}h) =  \norm{g}(Q^\top\, x,h).$$
	Let $D(h)$ be the homogenized diffusion tensor for a particular realisation $h$ of $\P$. Then using a similar argument to that of Proposition \ref{thm:isotropy_periodic} gives
	\begin{equation*}
	\begin{split}
&		e\cdot D(\mathcal{Q}h)e \\
&= \inf_{v \in H^1_{per}(\T^2)}\int_{\T^2}\left( \nabla v(x) + e\right)\cdot Q\, g^{-1}(Q^\top x, h)Q^\top\left( \nabla v(x) + e\right)\sqrt{\norm{g}(Q^\top x, h)}\, dy \\	
	&= \inf_{w \in H^1_{per}(\T^2)}\int_{\T^2}\left( Q^\top\nabla w(Q^\top x) + Q^\top e\right)\cdot g^{-1}(Q^\top x, h)\left( Q^\top\nabla w(Q^\top x) + Q^\top e\right)\sqrt{\norm{g}(Q^\top x, h)}\, dy\\
&= e\cdot Q\, D(h) Q^\top e.
	\end{split}
	\end{equation*}

The result follows immediately from the previous relation since using the invariance of the measure $\P$ with respect to $\mathcal{Q}$:
	\begin{equation}
		\begin{split}
			\overline{D} &= \int D(h) \P(dh) \\
				    &= \int D(\mathcal{Q} h) \P(dh) \\
				    &= \int Q D(h) Q^\top \P(dh) \\
				    &= Q \,\overline{D}\, Q^\top.
		\end{split}
	\end{equation}
It follows from Schur's lemma that $\overline{D}$ is isotropic.
\smartqed \qed
\end{proof}

Although $\overline{D}  = \mathbb{E}_{P}\left[D(h)\right]$ is isotropic, one cannot directly apply the area scaling approximation from Section \ref{sec:static_isotropy} to obtain a closed-form expression for $D$.  Two estimates were proposed for $\overline{D}$ in \cite{naji2007diffusion}, namely the averaged area scaling estimate $\overline{D}_{as} = \mathbb{E}_{\P}\left[\frac{1}{Z(h)}\right] \mathbf{I}$ and the effective medium approximation $\overline{D}_{ema} = \mathbb{E}_{\P}\left[\frac{Z(h)}{\int \norm{g}(y,h)\,dy}\right] \mathbf{I}$.  Based on numerical experiments,  the authors conclude that the area scaling estimate $D_{as}$ gives the best agreement with $D$.  
\\\\
Let $Q$ be a $90^\circ$ rotation and define $\mathcal{Q}:C(\T^d) \rightarrow C(\T^d)$ to be $$\mathcal{Q} \left(h\right) = h(Q\cdot).$$ Using a formal regular perturbation argument, the following result shows that in the low disorder limit when $\delta = \mathbb{E}_{\P}\norm{\nabla h}^2 \ll 1$, the averaged diffusion tensor $\overline{D}$ is well approximated by $D_{as}$, provided the random surface field measure $\P$ is invariant under $\mathcal{Q}$:
\begin{theorem}
\label{thm:as_quenched}
	Suppose that 
	\begin{equation}
		\label{eq:as_quenched_condition}
		\P\circ \mathcal{Q}^{-1} = \P,
	\end{equation}
	and that $\delta = \mathbb{E}_{\P}\left[\norm{\nabla h(y)}^2\right] \ll 1$, then for any unit vector $e \in \R^2$,
	\begin{equation}
		e\cdot \overline{D}e = \overline{D}_{as}+ O(\delta^{2}),
\end{equation}
where $\overline{D}_{as} = \mathbb{E}_{\P}\left[\frac{1}{Z(h)}\right]$.
\end{theorem}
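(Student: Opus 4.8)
The plan is to treat the surface as a small perturbation of the plane, expand the realisation-wise homogenised tensor $D(h)$ of Theorem~\ref{thm:periodic_homog} to second order in the fluctuation amplitude, and then average over $\P$. The key point will be that the leading, i.e. $O(\delta)$, corrections to $e\cdot D(h)e$ and to $Z(h)^{-1}$ are quadratic forms in $\nabla h$ that, once integrated against $\P$, coincide precisely because the invariance hypothesis (\ref{eq:as_quenched_condition}) forces the relevant averaged quadratic form to be isotropic.

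First I would record the expansion of the divergence-form coefficient $A(h):=\sqrt{\norm{g}}\,g^{-1}$. Since $g=I+\nabla h\otimes\nabla h$, the Sherman--Morrison identity together with (\ref{eq:surface_area}) gives $\norm{g}=1+\norm{\nabla h}^{2}$ and
\begin{equation*}
A(h)=I+A_1(h)+O(\norm{\nabla h}^{4}),\qquad A_1(h):=\tfrac12\norm{\nabla h}^{2} I-\nabla h\otimes\nabla h .
\end{equation*}
Rescaling $h\mapsto\lambda h$ makes this an honest series in $\lambda^2$ — only even powers of the amplitude occur, since $h$ enters solely through $\nabla h\otimes\nabla h$ — so the natural small parameter after averaging is $\delta$ rather than $\sqrt\delta$. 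Writing the cell equation (\ref{eq:celleqn_case1}) in divergence form, $\nabla\cdot\bigl(A(h)(\nabla\chi^e+e)\bigr)=0$, the corrector expands as $\chi^e=\chi_1+O(\delta^2)$, where $\chi_1$ is the mean-zero periodic solution of $\Delta\chi_1=-\nabla\cdot(A_1(h)e)$ and is therefore of size $\delta$. Substituting into (\ref{eq:eff_diff_matrix}) and testing the cell equation against $\chi^e$ to kill the term quadratic in $\nabla\chi^e$, one is left with $Z(h)\,(e\cdot D(h)e)=\int_{\T^2}e\cdot A(h)(\nabla\chi^e+e)\,dy$; here the $O(1)$ part of $A(h)$ pairs with $\int_{\T^2}\nabla\chi^e\,dy=0$, so to the order needed only $\int_{\T^2}e\cdot A(h)e\,dy=1+\int_{\T^2}\bigl(\tfrac12\norm{\nabla h}^{2}-(e\cdot\nabla h)^{2}\bigr)\,dy+O(\delta^2)$ contributes. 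Combining with $Z(h)=1+\tfrac12\int_{\T^2}\norm{\nabla h}^{2}\,dy+O(\delta^2)$ from (\ref{eq:Z}) and (\ref{eq:surface_area}) and dividing, the $\tfrac12\norm{\nabla h}^{2}$ terms cancel and I obtain
\begin{equation*}
e\cdot D(h)e=1-\int_{\T^2}(e\cdot\nabla h)^{2}\,dy+O(\delta^2),\qquad \frac{1}{Z(h)}=1-\tfrac12\int_{\T^2}\norm{\nabla h}^{2}\,dy+O(\delta^2).
\end{equation*}

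Next I would average using (\ref{eq:periodic_D_av}) and introduce the symmetric matrix $M$ with entries $M_{ij}=\int_{\T^2}\mathbb{E}_{\P}\bigl[\partial_i h(y)\,\partial_j h(y)\bigr]dy$, so that $e\cdot\overline{D}e=1-e\cdot M e+O(\delta^2)$ while $\overline{D}_{as}=1-\tfrac12\,\mathrm{tr}(M)+O(\delta^2)$ (and $\mathrm{tr}(M)=\delta$ by the definition of $\delta$). The hypothesis $\P\circ\mathcal{Q}^{-1}=\P$, combined with $\nabla(h\circ Q)=Q^\top(\nabla h)\circ Q$ and the change of variables $z=Qy$, yields $M=Q^\top M Q$; since $Q$ is a $\tfrac{\pi}{2}$ rotation ($Q\neq\pm I$), Schur's lemma — exactly as in the proof of Theorem~\ref{thm:isotropy_periodic} — forces $M=\tfrac12\,\mathrm{tr}(M)\,I$, hence $e\cdot Me=\tfrac12\,\mathrm{tr}(M)$ for every unit $e$. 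Subtracting the two identities above then gives $e\cdot\overline{D}e-\overline{D}_{as}=O(\delta^2)$, which is the claim.

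The two facts that make the argument run are, first, that all odd powers of the amplitude are absent (so the leading error is genuinely $O(\delta^2)$, not $O(\delta^{3/2})$), and second, that the symmetry collapses the a priori anisotropic first-order correction $e\cdot Me$ onto the isotropic correction $\tfrac12\,\mathrm{tr}(M)$ appearing in the area-scaling estimate. The main obstacle to making this rigorous — which is why the statement is advertised as formal — is uniform control of the remainders over the Gaussian-type measure $\P$: one needs elliptic estimates for $\chi^e$ with constants depending only on $\Norm{\nabla h}_{\infty}$ together with Fernique-type tail bounds on $\Norm{\nabla h}_{\infty}$ under $\P$, in order to guarantee that the $\P$-expectations of the remainder terms are truly $O(\delta^2)$.
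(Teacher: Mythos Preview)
Your proposal is correct and follows essentially the same perturbative strategy as the paper: expand $\sqrt{\norm{g}}\,g^{-1}$ to first order in $\norm{\nabla h}^{2}$, expand the corrector accordingly, compute $e\cdot D(h)e$ to $O(\delta)$, and then use the rotational invariance of $\P$ to kill the $O(\delta)$ discrepancy with the area-scaling term. The only cosmetic differences are that the paper writes the first-order part of $A(h)$ via $(\nabla h_{0})^{\perp}\otimes(\nabla h_{0})^{\perp}$ and keeps $1/Z(h)$ as an unexpanded prefactor (showing that the trace-free matrix $H-\tfrac12\norm{\nabla h_{0}}^{2}I$ has zero $\P$-mean by invariance), whereas you expand both $D(h)$ and $1/Z(h)$ about $1$ and phrase the symmetry step through the averaged second-moment matrix $M$; these are equivalent formulations of the same computation.
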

\begin{proof}

We look for solutions of the cell equation
\begin{equation}
	\label{eq:celleqn_e}
	\nabla\cdot\left(\sqrt{\norm{g}(y,h)}g^{-1}(y,h)\left(\nabla \chi^{e}(y,h) + e\right)\right) = 0.
\end{equation}
We look for solutions $\chi^{e}$ of (\ref{eq:celleqn_e}) satisfying $\int_{\T^{2}}\chi^{e}(y, h)\,dy = 0$ and in the form of a power series in $\delta$
 \begin{equation}
 	\chi^{e}(y,h) = \chi^{e}_0(y,h) + \delta \chi^{e}_2(y, h) + O(\delta^{2}).
\end{equation}
Write $\nabla h(y)  = \delta^{\frac{1}{2}} \nabla h_0(y)$, where $\mathbb{E}[\norm{\nabla h_0}^2] = 1$.
By Taylor's theorem we have that
\begin{equation}
\label{eq:asympt1}
	\sqrt{1 + \delta\norm{\nabla{h_0}(y)}^{2}} = 1 + \frac{\delta\norm{\nabla{h_0}(y)}^{2}}{2} + O(\delta^{2}).
\end{equation}
Similarly we can write
\begin{equation}
\label{eq:asympt2}
\sqrt{\norm{g}(y,h)}g^{-1}(y,h) =  \left(1 - \frac{\delta\norm{\nabla{h_0}(y)}^{2}}{2}\right)\left( I + \delta H(y,h_0)\right) + O(\delta^{2}),
\end{equation}
  where ${H}(y, h_0) =  (\nabla h_0)^{\bot}\otimes{(\nabla h_0)^{\bot}}$.  Substituting (\ref{eq:asympt1}) and (\ref{eq:asympt2}) into (\ref{eq:celleqn_e}) neglecting terms of order $\delta^{2}$ and higher, it follows that:
\begin{eqnarray*}
-&\nabla&\cdot\left(\left( 1 - \frac{\delta\norm{\nabla{h_0}(y)}^{2}}{2}\right)\left( I + \delta{H}(y, h_0)\right)\left(\nabla\chi^{e}_{0}  + \delta\nabla\chi^{e}_{2} \right)\right) = \\ & &  -\nabla\cdot\left(\left( 1 - \frac{\delta\norm{\nabla{h_0}(y)}^{2}}{2}\right)\left( I + \delta{H}(y, h_0)\right)e\right)
\end{eqnarray*}
Collecting terms of order $0$ we get:
\begin{equation}
	-\Delta \chi^{e}_{0}(y,h_0) = \nabla\cdot e = 0,
\end{equation}
which implies that $\chi^{e}_{0} = 0$ as expected.  Similarly, collecting $O(\delta)$ terms:
\begin{equation}
	\label{eq:chi_e_2}
	-\Delta \chi^{e}_{2} = \nabla\cdot\left({H}(y, h_0)e - \frac{\norm{\nabla h_0(y) }^{2}}{2}e\right).
\end{equation}
Since the integral of the right hand side is 0,  by the Fredholm alternative there is a unique solution $\chi_{e,2}$  with mean zero.  The effective diffusion tensor $D$ can be computed from $\chi_{e}$ as follows
\begin{equation*}
	\begin{aligned}
	\label{eq:diff}
	e\cdot D(h) e = &\frac{1}{Z(h)}\int_{\T^{2}}e\cdot g^{-1}(y, h)e \sqrt{\norm{g}(y,h)}\,dy \\ -  &\frac{1}{Z(h)}\int_{\T^{2}}\nabla \chi^{e}(y,h)\cdot g^{-1}(y,h)\nabla \chi^{e}(y,h) \sqrt{\norm{g}(y,h)}\,dy.
	\end{aligned}
\end{equation*}
  Substituting the above expansions in (\ref{eq:diff}) 
\begin{equation*}
\label{eqn:eff_diff_asympt}
\begin{split}
e\cdot D(h)e &= \frac{1}{Z(h)}\int_{\T^{2}} e\cdot \left(1 - \frac{\delta\norm{\nabla{{h_0}(y)}}^{2}}{2} +O(\delta^{2})\right)\left( I + \delta{H}(y,h_0)\right) e\, dy  \\&-  \frac{\delta^{2}}{Z(h)}\int_{\T^{2}}\nabla \chi^e_{2}(y,h_0)\cdot \left(1 - \frac{\delta\norm{\nabla{{h_0}(y)}}^{2}}{2} + O(\delta^{2})\right)\left( I + \delta{H}(y, h_0)\right)\nabla \phi^e_{2}(y)\,dy.
\end{split}
\end{equation*}
Collecting terms of equal powers of $\delta$:
\begin{equation*}
e\cdot D(h)e = \frac{1}{Z(h)} + \frac{\delta}{Z(h)}\int_{\T^{2}}e\cdot \left({H}(y, h_0) - \frac{\norm{\nabla {h_0}(y)}^{2}}{2}\right) e \,dy + \delta^{2}K(h_0).
\end{equation*}
Taking expectation with respect to $\P$ and applying Fubini's theorem, we see that the $O(\delta)$ term is given by
\begin{equation}
	\label{eq:periodic_alpha2}
	\begin{aligned}
	\int_{\T^2} e\cdot \left(\int\,\frac{1}{Z(h)}\left[H(y, h_0) - \frac{\norm{h_0(y)}^2}{2}I\right] \, \P(dh)  \right)e\, dy.
	\end{aligned}
\end{equation}
By the assumption of invariance with respect to $\mathcal{Q}$, it follows that (\ref{eq:periodic_alpha2}) equals 0.  Therefore,  taking expectation on both sides we have that
$$\overline{D} = \mathbb{E}_{\P}\left[D(h)\right]  = \mathbb{E}_{\P}\left[\frac{1}{Z(h)}\right] + \delta^2 \mathbb{E}_{\P}\left[K(h)\right].$$   Finally, it is straightforward to see that $\mathbb{E}_{\P}\left[K(h)\right]$ is $O(1)$ with respect to $\delta$, so that the result follows.
\smartqed \qed
\end{proof}

It follows by Taylor expanding $\overline{D}_{as}$ that 
\begin{equation}
	\label{eq:weak_disorder_approx}
	\overline{D} = 1 - \frac{1}{2}\delta + O(\delta^2),
\end{equation}
for $\delta = \mathbb{E}_{\mathbb{P}}\norm{\nabla h(y)}^2,$  which gives a first order approximation for $\overline{D}$ in the weak disorder limit (i.e. where $\norm{\nabla h}$ is small).

\subsection{Diffusion on a Helfrich Surface in the $(\alpha , \beta) = (1, -\infty)$ Regime}
\label{sec:case1_helfrich}
We can apply the results of the previous section to study the macroscopic behaviour of particles diffusing on a two dimensional quenched Helfrich elastic membrane.  To this end,  as in Section \ref{sec:helfrich} we set $\mathbb{K} = \lbrace k \in \mathbb{Z}^2\setminus \lbrace (0,0)\rbrace \, | \, \norm{k} \leq c \rbrace,$  and set the coefficients of $\eta^\epsilon(t)$ to be
	$\Gamma = \mbox{diag}\left(\Gamma_k\right)_{k \in \mathbb{K}}$  and $\Pi = \mbox{diag}\left(\Pi_k\right)_{k \in \mathbb{K}}$,
where $\Gamma_k$ and $\Pi_k$ are given by (\ref{eq:helfrich_drift}) and (\ref{eq:helfrich_diffusion}) respectively.  The spatial variation is then determined by $\lbrace e_k \rbrace_{k \in \mathbb{K}}$ where $e_k(x) = e^{2\pi i x}$.
\\\\
Since $\Gamma_k$ and $\Pi_k$ depend only on $\norm{k}$, the condition for Proposition \ref{prop:isotropy_stat_random} holds trivially, and so the average effective diffusion tensor is isotropic although individual realisations are not isotropic, with the anisotropy growing as ${\kappa}^*$ and $\sigma^*$ approaching zero.  Moreover,  for large values of $\kappa^*$ and $\sigma^*$, we expect that the averaged effective diffusion tensor $\overline{D}$ is well approximated by $\overline{D}_{as}$,  by Theorem \ref{thm:as_quenched}.
\\\\
To verify these two predictions we approximate $\overline{D}$ numerically for various parameter values.   Realisations of the stationary surface field were generated by sampling the Fourier modes $\eta_k$ from their respective invariant distribution and performing a Fast Fourier Transform.  For each realisation of the surface, $D(h)$ was computed using the numerical scheme described in Section \ref{sec:static_numerics}.  In Figure \ref{fig:case1_sigma_0_100} we plot $\overline{D}$ for varying bending modulus $\kappa^*$, surface tension set to  $\sigma^* = 0$, $100$ and $500$ and $K = 32$. The effect of $\kappa^*$ and, to a lesser extent $\sigma^*$ on the variance of the effective diffusion tensor is clear.  We also plot the averaged area scaling approximation for this case.  As predicted by Theorem \ref{thm:as_quenched} for large values of $\kappa^*$,  which corresponds to the weak disorder limit, that is $\mathbb{E}_{\P}\left[\norm{\nabla h}^2\right] \ll 1$,  the averaged area scaling approximation $\overline{D}_{as}$ provides a good approximation to $\overline{D}$, but as $\kappa^* \rightarrow 0$ the disparity between $\overline{D}$ and $\overline{D}_{as}$ increases,  with $\overline{D}_{as}$ underestimating  the average diffusion tensor.  
 
 \begin{figure}[h!]
  \centering
	\includegraphics[width=\textwidth]{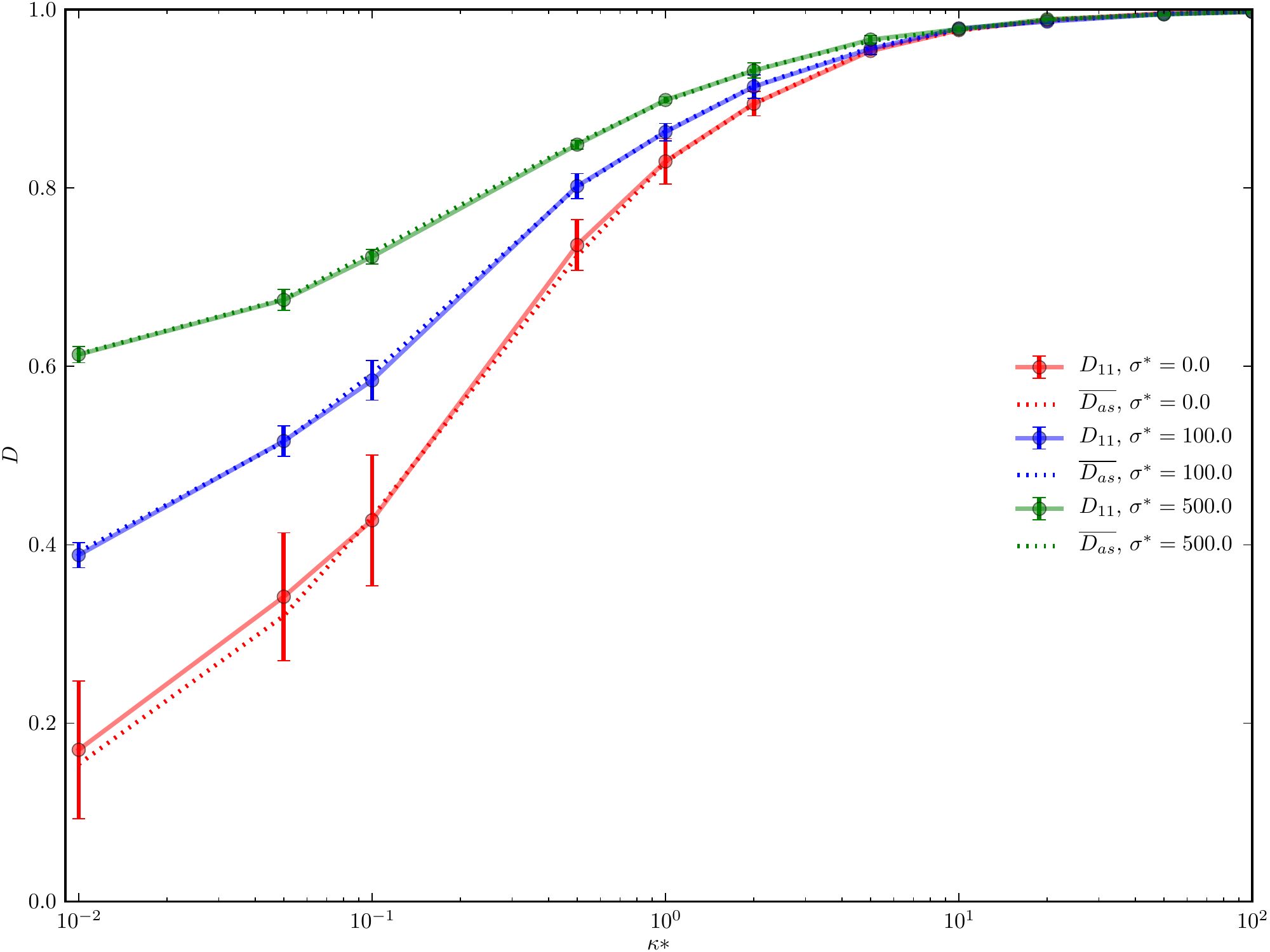} 
\label{fig:case1_sigma_0_100}
\caption[Effective diffusion tensor for a Helfrich elastic membrane in the Case I regime.]{Plot of the distributions of the isotropic effective diffusion tensor $D$ for a quenched realisation of a Helfrich surface, with $K = 32$, and $\sigma^* = 0$, $100$ and $100$. Dots denote the mean of the distribution for each $\kappa^*$ while error bars denote the standard deviation.  The dotted line denotes the average area scaling approximation.}
\end{figure}

\section{Case II: Diffusion on surfaces possessing purely temporal fluctuations}
\label{sec:caseII}
In this section we study the Case II regime, where the fast-scale fluctuations are entirely temporal, corresponding to $(\alpha ,\beta) =  (0 ,1)$ in equation (\ref{eq:periodic_sde}). In Section \ref{sec:av_formal} we use formal expansions to identify the drift and diffusion tensors of the annealed limit process, which are given by the ergodic averages of the drift and diffusion tensors of the multiscale problem. The subsequent sections will then focus on the Helfrich elastic model where we derive exact and asymptotic expressions for the effective diffusion tensor providing a rigorous justification of the ``preaveraging" approximation described in   \cite{reister2007lateral,gustafsson1997diffusion,naji2007diffusion}.
 
\subsection{Averaging Result}
\label{sec:av_formal} 
 In this regime,  (\ref{eq:periodic_sde}) can be written as
\begin{subequations}
\label{eq:nondim_diffusion_sde_averaging}
\begin{align}
d{X^\epsilon}(t) &= F(X^\epsilon(t), \eta^\epsilon(t))\, dt + \sqrt{2\Sigma\left(X^\epsilon(t),\eta^\epsilon\left(t\right)\right)}\,dB(t), \\
	d\eta^\epsilon(t) &=\ -\frac{1}{\epsilon}\Gamma \eta + \sqrt{\frac{2}{\epsilon}\Gamma\Pi}\,dW(t),
	\end{align}
\end{subequations}
where $F$ and $\Sigma$ are given by (\ref{eq:drift_term}) and (\ref{eq:diffusion_term}) respectively and where $B(\cdot)$ is a standard $d$-dimensional Brownian motion. The process $W(\cdot)$ is a standard $K$-dimensional Brownian motion.  The generator of the fast process $\eta^\epsilon(t)$ is $\frac{1}{\epsilon}\mathcal{L}_0$, with
\begin{equation}
	\notag
\label{eq:case2_generator}
\mathcal{L}_{0}f(\eta) = -\Gamma \eta \nabla f(\eta) + \Gamma \Pi:\nabla \nabla f(\eta), \quad f \in C^2_b(\R^d).
\end{equation}
The fast process $\eta^\epsilon(t)$ is geometrically ergodic with invariant distribution $\mathcal{N}(0, \Pi)$.  In particular $$\mathcal{N}[\mathcal{L}_0] = \lbrace \mathbf{1} \rbrace \quad \mbox{ and } \quad
\mathcal{N}[\mathcal{L}_0^*] = \lbrace \rho_\eta \rbrace,$$
where 
\begin{equation}
	\label{eq:timedep_ou_invariant}
\rho_\eta(\eta) = \frac{1}{\sqrt{\left(2\pi\right)^d\, \norm{\Pi}}}\exp\left(-\frac{1}{2}\eta \cdot \Pi^{-1} \eta\right),
\end{equation}  
The corresponding backward Kolmogorov equation for this coupled system is given by 
\begin{subequations}
\begin{align}
\label{eq:kbe_case3}
	\frac{\partial v^\epsilon}{\partial t}(x, \eta, t) & = \mathcal{L}^\epsilon v^\epsilon(x,\eta, t), \qquad (x,\eta, t) \in \R^d\times \R^K \times (0,T] \\
	v^\epsilon(x,\eta, 0) &= v(x, \eta), \qquad (x, \eta) \in \R^d\times \R^K.
\end{align}
\end{subequations}
where
\begin{equation}
\mathcal{L}^\epsilon = \frac{1}{\epsilon}\mathcal{L}_{0} + \mathcal{L}_{1},
\end{equation}
for
\begin{equation*}
\begin{split}
\mathcal{L}_{1} f(x, \eta) &= \frac{1}{\sqrt{\norm{g}(x,\eta)}}\nabla \cdot \left(\sqrt{\norm{g}(x,\eta)}g^{-1}(x)\nabla f(x,\eta)\right).
\end{split}
\end{equation*}
We now state the averaging result for this regime.  A formal justification using perturbation expansions is provided in Appendix \ref{sec:case2_app}.  For a rigorous justification refer to \cite{duncan2013thesis}.

\begin{theorem}
	\label{thm:homog_case3}
	Let $T > 0$ and suppose that $\eta^\epsilon(t)$ is stationary.  Then the process $X^\epsilon$ converges weakly in $C([0,T]\,; \,\R^d)$  to a Wiener process $X^0(t)$ which is the unique  solution of the following It\^{o} SDE:

\begin{equation}
	\label{eq:case2_homogenized_sde}
	dX^0(t) = \overline{F}(X^0(t))dt + \sqrt{2\overline{\Sigma}(X^0(t))} \,dB(t),
\end{equation}
where 
\begin{equation}
	\label{eq:eff_drift_av}
\overline{F}(x) =  \int_{\R^K}F(x, \eta)\,\rho_\eta({d\eta})
\end{equation}
and \begin{equation}
	\label{eq:eff_diffusion_av}
	\overline{\Sigma}(x) =  \int_{\R^K}\Sigma(x, \eta) \rho_{\eta}(d\eta).
\end{equation}
  Moreover, assume that the backward Kolmogorov equation (\ref{eq:kbe_case3}) has  initial data $v$ independent of $\eta$ such that $v \in C_b^2(\R^d)$, then the solution $v^\epsilon$ of  (\ref{eq:kbe_case3}) converges pointwise to the solution $v^0$ of the following PDE,  

\begin{equation}
%\label{eq:effective_diff_eqn_case3}
	\notag
\begin{aligned}
	\frac{\partial v^0}{\partial t}(x,t) & = \overline{F}(x)\cdot \nabla v^0(x,t) + \overline{\Sigma}(x):\nabla \nabla v^0(x,t), \qquad (x, t) \in \R^d \times (0,T]\\
	v^0(x, 0) &= v(x), \qquad x \in \R^d.
\end{aligned}
\end{equation}

 uniformly with respect to $t$ over $[0,T]$.
\end{theorem}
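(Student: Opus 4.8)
The formal multiscale expansion underlying this averaging result is carried out in Appendix \ref{sec:case2_app}; to make it rigorous the plan is to use the martingale-problem characterisation of the limit together with the perturbed test function (corrector) method, as in \cite{bensoussan1978asymptotic,pavliotis2008multiscale}. Throughout I would take $\eta^\epsilon(0)\sim\rho_\eta$ as in the hypothesis, so that $\eta^\epsilon(t)\sim\mathcal{N}(0,\Pi)$ for every $t$ and hence $\sup_{\epsilon>0}\sup_{t\in[0,T]}\mathbb{E}\,\norm{\eta^\epsilon(t)}^{2p}<\infty$ for all $p\geq 1$; an arbitrary deterministic initial value of $\eta^\epsilon$ would be handled identically, absorbing the short transient by the exponential ergodicity of the Ornstein--Uhlenbeck semigroup.

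The first step is tightness of $\{X^\epsilon\}_{\epsilon>0}$ in $C([0,T];\R^d)$. The key observations are that $\Sigma(x,\eta)=g^{-1}(x,\eta)$ satisfies $0<\Sigma\leq I$, so the martingale part of (\ref{eq:nondim_diffusion_sde_averaging}) has $\epsilon$-uniformly bounded increments, and that $F(x,\eta)$ is smooth and $1$-periodic in $x$ with at most polynomial growth in $\eta$, i.e. $\norm{F(x,\eta)}\leq C(1+\norm{\eta})^m$ uniformly in $x$. Combining the Burkholder--Davis--Gundy inequality with the moment bounds on $\eta^\epsilon$ would give $\mathbb{E}\,\norm{X^\epsilon(t)-X^\epsilon(s)}^4\leq C\norm{t-s}^2$ uniformly in $\epsilon$, whence tightness follows from Kolmogorov's criterion.

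Next I would identify the limit. For a test function $\phi\in C_c^\infty(\R^d)$, writing $\mathcal{L}_1\phi(x,\eta)=F(x,\eta)\cdot\nabla_x\phi+\Sigma(x,\eta):\nabla_x\nabla_x\phi$ and $\overline{\mathcal{L}}\phi(x)=\overline{F}(x)\cdot\nabla\phi+\overline{\Sigma}(x):\nabla\nabla\phi$, the function $\mathcal{L}_1\phi(x,\cdot)-\overline{\mathcal{L}}\phi(x)$ has zero $\rho_\eta$-mean by the definitions (\ref{eq:eff_drift_av})--(\ref{eq:eff_diffusion_av}), so the Poisson equation $\mathcal{L}_0\phi_1(x,\cdot)=\overline{\mathcal{L}}\phi(x)-\mathcal{L}_1\phi(x,\cdot)$ has, by the Fredholm alternative for the Ornstein--Uhlenbeck generator, a unique centred solution $\phi_1$, smooth in $x$ and polynomially bounded in $\eta$ together with its derivatives. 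Applying It\^o's formula to $\phi(X^\epsilon(t))+\epsilon\,\phi_1(X^\epsilon(t),\eta^\epsilon(t))$ and using $\mathcal{L}_0\phi=0$ and $\mathcal{L}_0\phi_1+\mathcal{L}_1\phi=\overline{\mathcal{L}}\phi$, one finds that $\phi(X^\epsilon(t))-\phi(X^\epsilon(0))-\int_0^t\overline{\mathcal{L}}\phi(X^\epsilon(s))\,ds$ equals a martingale plus remainder terms of order $\epsilon$, which vanish in $L^1([0,T])$ by the moment bounds of the previous step. Passing to the limit along any weakly convergent subsequence $X^{\epsilon_n}\Rightarrow X^0$ then shows that $X^0$ solves the martingale problem for $\overline{\mathcal{L}}$, i.e. (\ref{eq:case2_homogenized_sde}).

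Finally, since $\overline{F}$ and $\overline{\Sigma}$ are smooth and $1$-periodic in $x$ (hence bounded and Lipschitz) and $\overline{\Sigma}$ is uniformly elliptic --- for a unit vector $e$ one has $e\cdot g^{-1}(x,\eta)e\geq(1+\norm{\nabla h(x,\eta)}^2)^{-1}\geq(1+C\norm{\eta}^2)^{-1}$, so $e\cdot\overline{\Sigma}(x)e\geq\int_{\R^K}(1+C\norm{\eta}^2)^{-1}\rho_\eta(d\eta)>0$ uniformly in $x$ --- the martingale problem for $\overline{\mathcal{L}}$ is well posed, so the limit point is unique and the whole family $X^\epsilon$ converges weakly to $X^0$. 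The PDE statement would then follow from the Feynman--Kac representation $v^\epsilon(x,\eta,t)=\mathbb{E}[v(X^\epsilon(t))\mid X^\epsilon(0)=x,\ \eta^\epsilon(0)=\eta]$ (using that $v$ depends only on $x$), the weak convergence for each fixed initial condition of the fast variable, the boundedness and continuity of $v$, and the equicontinuity in $t$ provided by the increment estimate, which upgrades pointwise to uniform convergence on $[0,T]$. The main obstacle, and the only place where genuine care is needed, is the absence of a uniform-in-$\eta$ bound on the drift $F$ and on the corrector $\phi_1$: every estimate must be carried out tracking polynomial growth in $\eta$ and then closed using the $\epsilon$-uniform moment bounds for the stationary, geometrically ergodic Ornstein--Uhlenbeck process $\eta^\epsilon$.
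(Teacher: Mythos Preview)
Your proposal is correct and is the natural rigorous counterpart of what the paper actually does. In the paper the ``proof'' of Theorem~\ref{thm:homog_case3} is only the formal two--scale expansion of Appendix~\ref{sec:case2_app}, with the rigorous argument deferred to \cite{duncan2013thesis}; your corrector $\phi_1$ is exactly the first--order term $v_1$ of that expansion (it solves the same OU Poisson equation), and the perturbed--test--function / martingale--problem scheme you describe --- tightness via moments of the stationary OU process, It\^o's formula applied to $\phi+\epsilon\phi_1$, and well--posedness of the limiting martingale problem from the uniform ellipticity of $\overline{\Sigma}$ --- is the standard way to convert such an expansion into a proof and is almost certainly what the cited thesis contains. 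One incidental simplification you may not have noticed: from the explicit form of $F$ (see the proof of Theorem~\ref{thm:case2_helfrich}) the drift is in fact \emph{bounded} in $\eta$, not merely polynomially growing, so the tightness step and the control of the remainder $\epsilon\int_0^t\mathcal{L}_1\phi_1\,ds$ are slightly easier than you indicate.
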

\smartqed\qed

\subsection{Diffusion on a Helfrich Surface in the $(\alpha , \beta) = (0, 1)$ Regime}
\label{sec:timedep:case2}

We can apply Theorem \ref{thm:homog_case3} to obtain the annealed limit equations for diffusion on a rapidly fluctuating Helfrich elastic membrane.   Indeed, we will show that as $\epsilon \rightarrow 0$, the process $X(\cdot)$ converges weakly to a pure diffusion process with constant diffusion tensor.   To this end,  as in Section \ref{sec:helfrich} we set $\mathbb{K} = \lbrace k \in \mathbb{Z}^2\setminus \lbrace (0,0)\rbrace \, | \, \norm{k} \leq c \rbrace,$  and set the coefficients of $\eta^\epsilon(t)$ to be
$$ \Gamma = \mbox{diag}\left(\Gamma_k\right)_{k \in \mathbb{K}} \mbox{ and } \Pi = \mbox{diag}\left(\Pi_k\right)_{k \in \mathbb{K}},$$
where $\Gamma_k$ and $\Pi_k$ are given by (\ref{eq:helfrich_drift}) and (\ref{eq:helfrich_diffusion}) respectively.  The spatial functions $\lbrace e_k \rbrace_{k \in \mathbb{K}}$ are given by the standard $L^2(\T^2)$ Fourier basis $e_k(x) = e^{2\pi i x}$.  The invariant distribution of $\eta_k^\epsilon(t)$ is then given by $\mu_k = \mathcal{N}(0, \Pi_k)$.
\\\\
The form of the limiting equation is strongly dependent on the symmetry properties of the stationary random field. 
\begin{lemma}
	\label{lem:helfrich_case2}
	Let $h(x)$ be a stationary realisation of the random field, that is,
	\begin{equation}
	\notag
		h(x) = \sum_{k \in \mathbb{K}} \eta_k e_k(x),
	\end{equation}
	where $(\eta_k)_{k \in \mathbb{K}} \sim \mu_k$. Then for each $x \in \T^2$, the vectors 

\begin{equation*}
	\left(h_{x_1}(x), h_{x_2}(x), h_{x_1x_2}(x), h_{x_1x_1}(x)\right),
\end{equation*} 
and 
\begin{equation*}
	\left(h_{x_1}(x), h_{x_2}(x), h_{x_1x_2}(x), h_{x_2x_2}(x)\right),
\end{equation*} 
are both jointly Gaussian with mean zero, and the components of each vector are independent.
\end{lemma}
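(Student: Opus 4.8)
The plan is to exploit the special structure of the Helfrich coefficients: since $\Pi_k$ in (\ref{eq:helfrich_diffusion}) depends on $k$ only through $\norm{k}$, and the index set $\mathbb{K}$ is invariant under every sign change and permutation of the components of $k$, the field $h$ is a real-valued, stationary, mean-zero Gaussian field whose covariance function is even in each Cartesian coordinate \emph{separately}. Such a symmetry forces every mixed partial derivative of the covariance that is of odd order in one of the coordinates to vanish at the origin, and the off-diagonal covariances we must kill are precisely derivatives of this type. Concretely, fix $x\in\T^2$. Each of $h_{x_1}(x),h_{x_2}(x),h_{x_1x_2}(x),h_{x_1x_1}(x)$ is a finite linear combination $\sum_{k\in\mathbb{K}}\eta_k(\partial^\alpha e_k)(x)$ of the mutually independent Gaussians $\eta_k\sim\mathcal{N}(0,\Pi_k)$, so the four-vector is jointly Gaussian with mean zero. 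Since jointly Gaussian mean-zero variables are independent iff pairwise uncorrelated, it suffices to show that the $4\times4$ covariance matrix is diagonal, i.e.\ that all six distinct pairwise covariances vanish; the same reduction applies to the second vector.

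\textbf{Step 2 (symmetries of the covariance).} Set $C(x-y):=\mathbb{E}[h(x)h(y)]$. Independence of the modes gives $\mathbb{E}[h(x)h(y)]=\sum_{k\in\mathbb{K}}\Pi_k\,e_k(x)\overline{e_k(y)}$, and because for each frequency the cosine and sine parts carry the common variance $\Pi_k=\Pi_k(\norm{k})$, this collapses to
$$C(z)=\sum_{k\in\mathbb{K}}\Pi_k\cos(2\pi k\cdot z),$$
a genuine function of $z=x-y$, so $h$ is stationary. Invariance of $\mathbb{K}$ and of the weights under $k\mapsto(\pm k_1,\pm k_2)$ then yields $C(z_1,z_2)=C(-z_1,z_2)=C(z_1,-z_2)$, so $C$ is even in $z_1$ and in $z_2$ separately. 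Consequently $\partial_1^m\partial_2^n C$ is odd in $z_1$ when $m$ is odd and odd in $z_2$ when $n$ is odd, and in either case vanishes at $z=0$.

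\textbf{Step 3 (the computation).} For a stationary field, $\mathbb{E}[\partial^\alpha h(x)\,\partial^\beta h(x)]=(-1)^{|\beta|}(\partial_1^{\alpha_1+\beta_1}\partial_2^{\alpha_2+\beta_2}C)(0)$, differentiation under the expectation being trivially justified since everything is a finite sum of smooth functions. Encoding $h_{x_1}\leftrightarrow(1,0)$, $h_{x_2}\leftrightarrow(0,1)$, $h_{x_1x_2}\leftrightarrow(1,1)$, $h_{x_1x_1}\leftrightarrow(2,0)$, the six pairs from the first vector produce the multi-indices $(1,1)$, $(2,1)$, $(3,0)$, $(1,2)$, $(2,1)$, $(3,1)$, each with at least one odd component, so by Step 2 every off-diagonal covariance vanishes and the components are independent. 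For the second vector, replacing $h_{x_1x_1}$ by $h_{x_2x_2}\leftrightarrow(0,2)$ changes only the three pairs involving the last entry, which now give $(1,2)$, $(0,3)$, $(1,3)$ — again each with an odd component — so the conclusion is identical.

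\textbf{Main obstacle.} There is no substantial difficulty here; the one point that must be handled with care is Step 2, namely making rigorous that $h$ is stationary with a coordinatewise-even covariance. This is exactly where the hypothesis $\Pi_k=\Pi_k(\norm{k})$ and the symmetry of $\mathbb{K}$ enter, and it should be accompanied by a remark reconciling the real-valuedness of $h$ with the complex-exponential notation for $\{e_k\}$ (equivalently, one writes $h$ in the real trigonometric basis). Alternatively one may bypass stationarity altogether and run the argument directly on $\sum_{k\in\mathbb{K}}\Pi_k(\partial^\alpha e_k)(x)(\partial^\beta e_k)(x)$, at the cost of tracking a few extra cases coming from the parities of $\sin$ and $\cos$.
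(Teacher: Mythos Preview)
Your proof is correct and rests on the same symmetry the paper exploits---that $\Pi_k$ depends only on $\norm{k}$ and that $\mathbb{K}$ is invariant under sign changes of each coordinate of $k$---but you organise the argument differently. The paper computes one representative covariance directly in Fourier space,
\[
\mathbb{E}\bigl[h_{x_1}(x)\,h_{x_2}(x)\bigr] \;=\; (2\pi)^2\sum_{k\in\mathbb{K}} k_1 k_2\,\Pi_k,
\]
observes that this vanishes because the summand is odd under $k_1\mapsto -k_1$, and dispatches the remaining pairs with ``similar arguments''. Your route via the covariance function $C(z)$ and its coordinatewise evenness is the position-space dual of that computation: it packages all six vanishing conditions into a single parity criterion on $\partial^{\alpha+\beta}C(0)$, which is tidier and makes the mechanism uniform, at the modest cost of first arguing stationarity and handling the real-versus-complex Fourier bookkeeping you flag. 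The direct Fourier-sum alternative you mention in your final paragraph is exactly the paper's approach.
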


\begin{proof}
 	Since a finite linear combination of centered Gaussian random variables is again a centered Gaussian random variable, it is clear that both vectors are centered Gaussian random vectors.  Moreover,  the components of each vector are pairwise uncorrelated.  To see this for $h_{x_1}(x)$ and $h_{x_2}(x)$:
\begin{equation*}
\begin{aligned}
	\mathbb{E}\left[h_{x_1}(x)h_{x_2}(x)\right] &= \mathbb{E}\left[\left(\sum_{k \in \mathbb{K}} (2\pi i k_1)\eta_k e_k(x)\right)\left(\sum_{j \in \mathbb{K}} (2\pi i j_2)\eta_j e_{j}(x)\right)^*\right] \\
	&= (2\pi)^2\sum_{k \in \mathbb{K}} k_1 k_2 \Pi_k.
\end{aligned}
\end{equation*}
Due to the symmetry of $\mathbb{K}$ around $0$, it follows that the term on the RHS is $0$, so that $h_{x_1}(x)$ and $h_{x_2}(x)$ are uncorrelated.  Similar arguments follow for the other pairs of components.
\end{proof}

We now state the limit theorem for diffusion on a rapidly fluctuating Helfrich elastic membrane. A formal derivation of formula (\ref{eq:effdiff_helfrich}) has been derived in \cite{naji2007diffusion} and \cite{reister2007lateral}.

\begin{theorem}
	\label{thm:case2_helfrich}
	Let $T > 0$, the process $X(\cdot)$ converges weakly in $C([0,T]; \R^2)$ to a  Brownian motion with scalar diffusion tensor given by
	\begin{equation}
	\label{eq:effdiff_helfrich}
		D = \frac{1}{2}\left(1 + \bigintsss_{\R^K}\left[\frac{1}{\norm{g}(x, \eta)}\right]\rho_\eta(d\eta)\right) \mathbf{I}.
	\end{equation}
Furthermore the resulting diffusion tensor $D$ is independent of $x$.
\end{theorem}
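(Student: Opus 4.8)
The plan is to read off the result from the averaging theorem, Theorem~\ref{thm:homog_case3}, which already gives weak convergence of $X^\epsilon$ in $C([0,T];\R^2)$ to the solution $X^0$ of $dX^0 = \overline{F}(X^0)\,dt + \sqrt{2\,\overline{\Sigma}(X^0)}\,dB$, with $\overline{F}(x) = \int_{\R^K}F(x,\eta)\,\rho_\eta(d\eta)$ and $\overline{\Sigma}(x) = \int_{\R^K}\Sigma(x,\eta)\,\rho_\eta(d\eta)$ for $F,\Sigma$ as in (\ref{eq:drift_term})--(\ref{eq:diffusion_term}). It therefore suffices to prove that (i) $\overline{\Sigma}(x)$ equals the scalar matrix on the right-hand side of (\ref{eq:effdiff_helfrich}) and is independent of $x$, and (ii) $\overline{F}\equiv 0$; the limiting SDE then collapses to a Brownian motion with generator $D\Delta$. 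Throughout I write $h_i = \partial_i h$, $h_{ij} = \partial_i\partial_j h$ and $\psi = \norm{g} = 1+\norm{\nabla h}^2$, and use the Sherman--Morrison identity $g^{-1} = I - \psi^{-1}\,\nabla h\otimes\nabla h$ together with $\sqrt{\norm{g}} = \sqrt{\psi}$.

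I first record the structural input. Because $e_k(x) = e^{2\pi i k\cdot x}$ and the coefficients $\Gamma_k,\Pi_k$ of (\ref{eq:helfrich_drift})--(\ref{eq:helfrich_diffusion}) depend on $k$ only through $\norm{k}$, with $\mathbb{K}$ invariant under the coordinate reflections and the $\tfrac{\pi}{2}$ rotation, the stationary Gaussian field $h(x) = \sum_{k\in\mathbb{K}}\eta_k e_k(x)$ has a law invariant under translations and under these symmetries. Hence the joint law under $\mu_\eta$ of $\big(\nabla h(x),\nabla^2 h(x)\big)$ does not depend on $x$ (this is the $x$-independence claimed in the theorem) and is invariant under $h_1\leftrightarrow h_2$ and under $h_1\mapsto -h_1$. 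In addition, Lemma~\ref{lem:helfrich_case2} gives that at each fixed $x$ the vectors $(h_1,h_2,h_{12},h_{11})$ and $(h_1,h_2,h_{12},h_{22})$ are each centred jointly Gaussian with independent components; in particular each of $h_{11},h_{12},h_{22}$ is independent of $(h_1,h_2)$ and has zero mean (the mean vanishes since $\mathbb{E}[\eta_k]=0$).

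For (i), Sherman--Morrison gives $\overline{\Sigma}_{ij} = \delta_{ij} - \mathbb{E}_{\mu_\eta}[\psi^{-1}h_i h_j]$, and $\psi$ depends on $h$ only through $(h_1,h_2)$. For $i\neq j$ the integrand is odd under $h_1\mapsto -h_1$, so $\overline{\Sigma}_{12}=0$, and the $h_1\leftrightarrow h_2$ symmetry forces $\overline{\Sigma}_{11}=\overline{\Sigma}_{22}$. Adding the two diagonal entries and using $h_1^2+h_2^2 = \psi - 1$ gives $\overline{\Sigma}_{11}+\overline{\Sigma}_{22} = 2 - \mathbb{E}_{\mu_\eta}[\psi^{-1}(h_1^2+h_2^2)] = 2 - \mathbb{E}_{\mu_\eta}[1-\psi^{-1}] = 1 + \mathbb{E}_{\mu_\eta}[\psi^{-1}]$, so $\overline{\Sigma} = \tfrac12\big(1 + \int_{\R^K}\norm{g}(x,\eta)^{-1}\rho_\eta(d\eta)\big)\mathbf{I}$, which is exactly $D$ in (\ref{eq:effdiff_helfrich}).

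The main work, and what I expect to be the real obstacle, is (ii). Differentiating $F_i = \psi^{-1/2}\,\partial_j\!\big(\sqrt{\psi}\,g^{-1}_{ij}\big)$ with the two identities above expresses $F_i$ as a finite linear combination of terms of the schematic form $\phi(h_1,h_2)\,h_{ab}$, where $h_{ab}\in\{h_{11},h_{12},h_{22}\}$, each term carries exactly one second-derivative factor, and $\phi$ is a bounded function of $(h_1,h_2)$ only (boundedness following from $\psi^{-1}h_i^2\le 1$ and $\psi^{-1}\norm{h_i}\le\tfrac12$). Since no term contains a product of two distinct second derivatives, for each term the relevant $h_{ab}$ is independent of $(h_1,h_2)$ --- invoking the first vector of Lemma~\ref{lem:helfrich_case2} when $h_{ab}$ is $h_{11}$ or $h_{12}$, and the second vector when it is $h_{22}$ --- so that $\mathbb{E}_{\mu_\eta}[\phi(h_1,h_2)h_{ab}] = \mathbb{E}_{\mu_\eta}[\phi(h_1,h_2)]\,\mathbb{E}_{\mu_\eta}[h_{ab}] = 0$. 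Summing over all terms, $\overline{F}(x)\equiv 0$. Together with (i), Theorem~\ref{thm:homog_case3} then identifies the limit $X^0$ as a Brownian motion with the constant scalar diffusion tensor $D$ of (\ref{eq:effdiff_helfrich}), and the $x$-independence of $D$ is the content of the structural remark in the second paragraph.
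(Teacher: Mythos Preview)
Your proof is correct and follows essentially the same route as the paper's: invoke Theorem~\ref{thm:homog_case3}, then use Lemma~\ref{lem:helfrich_case2} together with the parity and exchange symmetries of the Helfrich field to kill $\overline{F}$ and reduce $\overline{\Sigma}$ to the scalar form~(\ref{eq:effdiff_helfrich}). The only minor difference is that you obtain the $x$-independence of $D$ from the translation invariance of the stationary Gaussian field, whereas the paper computes $\nabla_x D$ explicitly and shows it vanishes by the same symmetry arguments; your argument is slightly cleaner there.
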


\begin{proof}
By Proposition \ref{thm:homog_case3}, the process $X(\cdot)$ converges weakly to a process with drift coefficient $\overline{F}(x)$ and diffusion tensor $\overline{\Sigma}(x)$ given by (\ref{eq:eff_drift_av}) and  (\ref{eq:eff_diffusion_av}) respectively.  Consider first the drift coefficient

\begin{equation*}
\overline{F}(x) = \bigints_{\R^K}\left[\frac{(1+ h_{x_1}^{2})h_{x_2 x_2} - 2h_{x_1}h_{x_2}h_{x_1x_2} + (1 + h_{x_2}^{2})h_{x_1x_1}}{(1 + h_{x_1}^{2} + h_{x_2}^{2})^{2}}\left(\begin{array}{c}h_{x_1} \\ h_{x_2}\end{array}\right)\right] \rho_\eta(d\eta),
\end{equation*}
Applying Lemma \ref{lem:helfrich_case2}, every term in the above sum is an odd function of a centered, Gaussian random vector.  Thus each term equals  $0$.
\\\\
Consider now the effective diffusion tensor
\begin{equation*}
	\overline{\Sigma} =  \bigintsss_{\R^K}\left[ \frac{1}{1 + h_{x_1}^{2} + h_{x_2}^{2}}\left(\begin{matrix}
  1 + h_{x_2}^{2} & -h_{x_1}h_{x_2} \\
  -h_{x_1}h_{x_2} & 1 + h_{x_1}^{2} 
 \end{matrix}
\right) \right] \rho_\eta(d\eta).
\end{equation*}
By the symmetry of $h_{x_1}$ and $h_{x_2}$ the off-diagonal terms are also zero, moreover, the diagonal terms are equal.   Thus 
\begin{equation*}
\begin{split}
	 \bigintsss_{\R^K}\left[\frac{1  + h_{x_2}^{2}}{1 + h_{x_1}^{2} + h_{x_2}^{2}}\right]\rho_\eta(d\eta) &= \frac{1}{2} \bigintsss_{\R^K}\left[\frac{1  + h_{x_2}^{2}}{1 + h_{x_1}^{2} + h_{x_2}^{2}}\right]\rho_\eta(d\eta) \\ &+ \frac{1}{2} \bigintsss_{\R^K}\left[\frac{1  + h_{x_1}^{2}}{1 + h_{x_1}^{2} + h_{x_2}^{2}}\right]\rho_\eta(d\eta) \\ &\quad = \frac{1}{2}\left( 1 + \bigintsss_{\R^K}\left[\frac{1}{1 + h_{x_1}^{2} + h_{x_2}^{2}}\right]\rho_\eta(d\eta)\right),
\end{split}
\end{equation*}
as required.  Finally,  
\begin{equation}
	\notag
\begin{split}
	\nabla_x D &=  \left[\bigintsss_{\R^K} \nabla_x\left[\frac{1}{\norm{g}(x, \eta)}\right]\rho_\eta(d\eta)\right]\, \mathbf{I} \\ &=  \left[-2\bigintsss_{\R^K}\left[\frac{\nabla_x \nabla_x h(x, \eta) \nabla_x h(x,\eta)}{\norm{g}(x, \eta)^2}\right]\rho_\eta(d\eta)\right]\, \mathbf{I} = 0,
\end{split}
\end{equation}
by the symmetry arguments of Lemma \ref{lem:helfrich_case2}, so that $D$ is independent of $x$. 
\end{proof}

Besides $\kappa^*$ and $\sigma^*$, the effective diffusion tensor also depends on the ultraviolet cut-off $c$ (or equivalently $K$).  One can observe that 
\begin{equation}
	\notag
	\lim_{K \rightarrow \infty } \bigintsss_{\R^K}\left[\frac{1}{\norm{g}(x, \eta)}\right]\rho_\eta(d\eta)  = 0,
\end{equation}
so that for any fixed $\kappa^*$, $\sigma^*$, the effective diffusion $D$ will approach $\frac{1}{2}$ as $K$ approaches $\infty$. In fact, for fixed $K$, $\sigma^*$ and $\kappa^*$, the effective diffusion tensor $D$ satisfies
	\begin{equation}
	\notag
		\frac{1}{2} < D < 1,
	\end{equation}
	and recalling that the molecular diffusion tensor $D_0$ was rescaled to $1$, this implies that the diffusion is depleted in the limit of $\epsilon \rightarrow 0$.  In the weak-disorder regime (i.e. when $\mathbb{E} \norm{\nabla h}^2 \ll 1$), which corresponds to the large $\kappa^*$ or large $\sigma^*$ regime, it is possible to derive estimates for $D$ by applying Taylor's theorem and using the fact that $\nabla h(x)$ is Gaussian to get, as a first order approximation
\begin{equation}
	\label{eq:case2_asymptotic_D}
D = 1 - \frac{1}{2}\delta + O(\delta^\frac{3}{2}),
\end{equation}
where, as in Section \ref{sec:caseI}, the constant $\delta$ quantifies the surface disorder and is given by:
\begin{equation}
	\notag
	\delta = \mathbb{E}\norm{\nabla h}^2 = \sum_{k \in \mathbb{K}}\frac{1}{ \kappa^* \norm{2 \pi k}^2 +\sigma^*},
\end{equation}
Comparing with the corresponding equation for Case I given in (\ref{eq:weak_disorder_approx}), we see that, to first order, both regimes exhibit the same behaviour in the weak disorder limit.

\subsection{Numerical Examples}
We can study how the bending modulus $\kappa^*$, surface tension  $\sigma^*$ and the ultraviolet cut-off $c$ (or equivalently $K$) affect the effective diffusion tensor numerically.  The ensemble average in (\ref{eq:eff_diffusion_av}) is computed using a straightforward Monte-Carlo method, taking the sample average of $\frac{1}{\norm{g}(x, \eta)}$  for $\eta$ sampled from its corresponding stationary measure. As (\ref{eq:case2_asymptotic_D}) suggests, for small values of $\kappa^*$ and $\sigma^*$, the larger thermal fluctuations of the surface cause a greater reduction in the speed of diffusion of a particle diffusing laterally on the surface. Indeed one can see that $(1 - D) \, \approx \, \frac{1}{{\kappa}^*}$ for fixed ${\sigma}^*$ and $(1 - D) \, \approx \, \frac{1}{{\sigma}^*}$ for fixed ${\kappa}^*$. In Figure \ref{fig:eff_diff_av} we plot  $D$ for varying values of ${\kappa}^*$, $K$ and for ${\sigma}^* = 0, 100, 500$.   The convergence of $D$ to $\frac{1}{2}$ becomes immediately apparent.  As expected, $D$ decays with $c$,  converging to  $\frac{1}{2}$ as $c \rightarrow \infty$.

 \begin{figure}[htpb]
\centering
	\includegraphics[width=\textwidth]{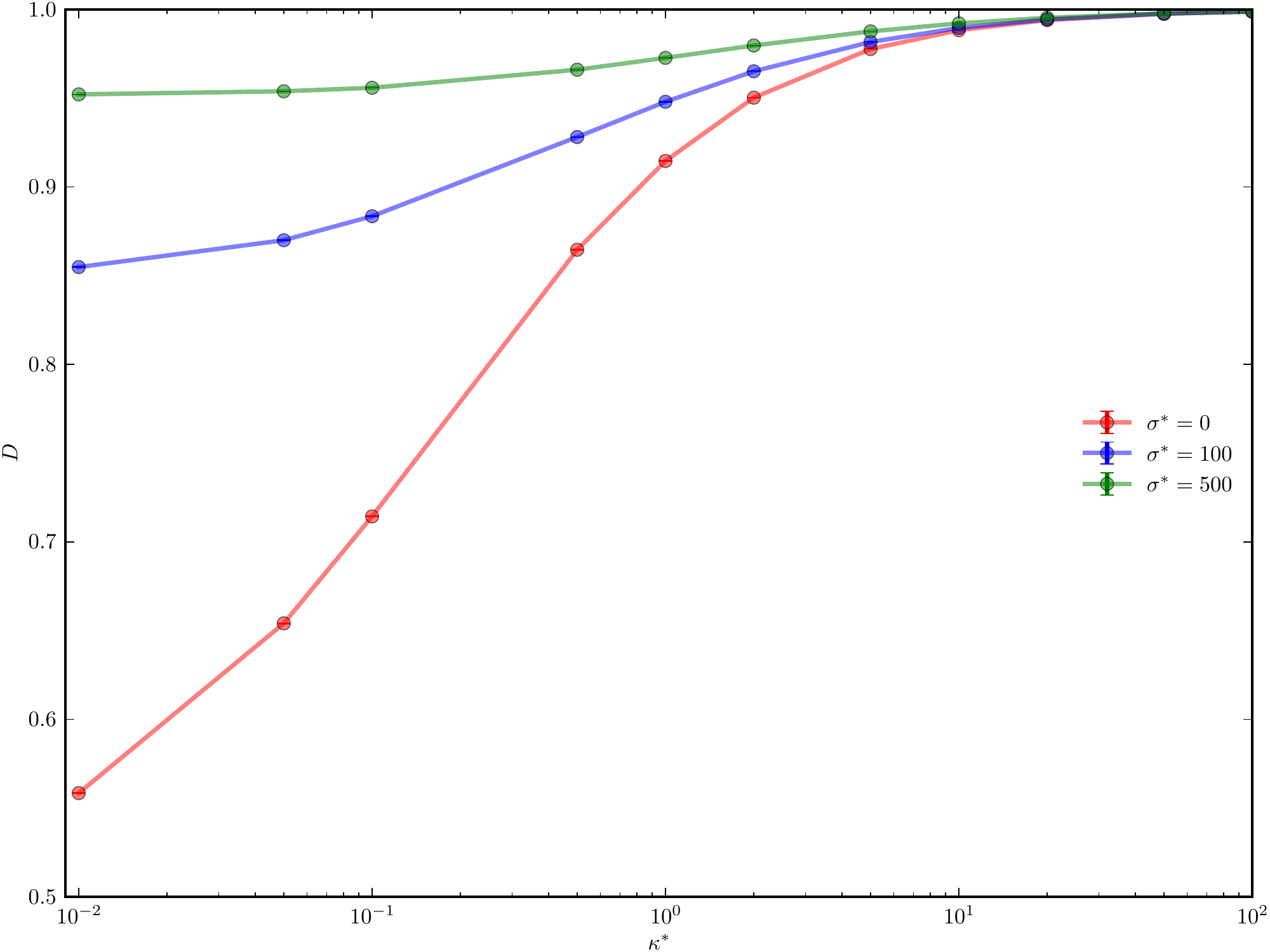} 

	\caption[Effective diffusion tensor for a Helfrich elastic membrane in the Case II regime.]{The effective diffusion tensor for a diffusion on a fluctuating Helfrich elastic membrane in the $(\alpha, \beta) = (0, 1)$ scaling for varying $\kappa^*$ and $K=32$.}
\label{fig:eff_diff_av}
\end{figure}

 \begin{figure}[htpb]
  \centering
	\includegraphics[width=\textwidth]{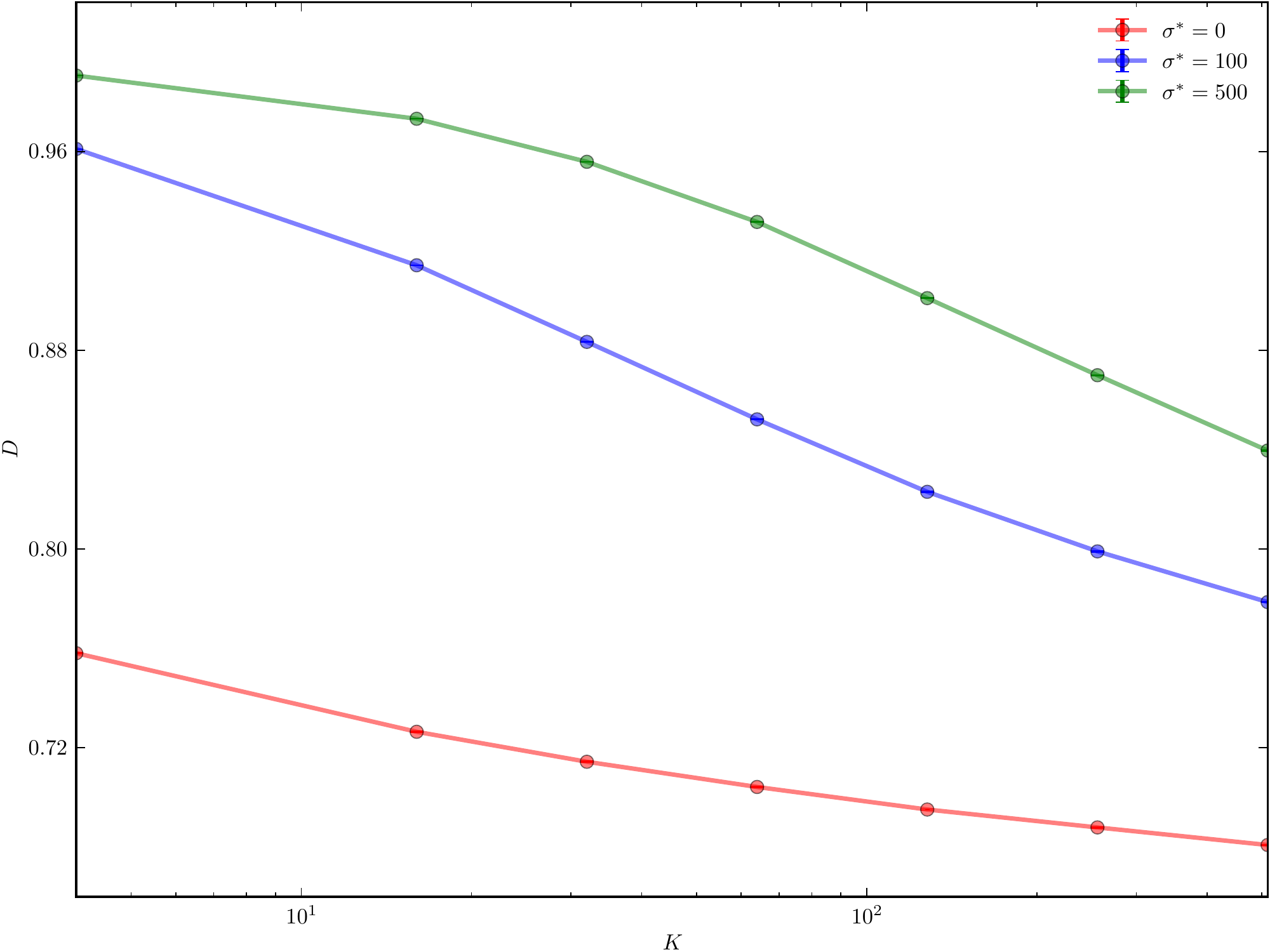} 

	\caption[Effective diffusion tensor for a Helfrich elastic membrane in the Case II regime.]{The effective diffusion tensor for a 		diffusion on a fluctuating Helfrich elastic membrane in the $(0, 1)$ scaling for varying $K$ and $\kappa^*=0.1$.}
	\label{fig:eff_diff_K}
\end{figure}

\section{Case III: Diffusion on surfaces with comparable spatial and temporal fluctuations}
\label{sec:caseIII}
In this section we consider the Case III regime where $(\alpha, \beta) = (1,1)$ in (\ref{eq:nondim_diffusion_sde_coupled}).  This scaling describes lateral diffusion on a rough surface which is also fluctuating rapidly, but the temporal surface fluctuations occur at a scale commensurate to the characteristic scale of the spatial fluctuations.  This scaling was considered for SDEs with periodic spatial and temporal fluctuations in \cite{garnier1997homogenization}.
A unique characteristic of this scaling regime is that it gives rise to a macroscopic drift term in the limit as $\epsilon \rightarrow 0$ which is determined by the rate of change of the corrector $\chi(y, \eta)$ with respect to the temporal fluctuations.
\\\\
A similar effective drift term arises in the model considered here. It is not clear that this drift is identically zero in general.  However, we identify a natural symmetry condition for the surface fluctuations for which we can prove the effective drift term vanishes.   In the remainder of this section we identify and study the properties of the macroscopic diffusion tensor and provide sufficient conditions for the isotropy of the effective diffusion tensor.  Finally, we study the limiting properties of  the Helfrich model in this regime.

\subsection{Homogenization Result}
\label{sec:homogenisation_case2}

Introducing the fast process $Y^\epsilon(t) = \frac{X^\epsilon(t)}{\epsilon} \mod \T^d$, equations (\ref{eq:periodic_sde}) can be written as the following  fast-slow system 
\begin{subequations}
\label{eq:case3_nondim_diffusion_sde}
\begin{align}
d{X^\epsilon}(t) &=\frac{1}{\epsilon}F\left({{Y^\epsilon}(t)}, \eta^\epsilon\left({t}\right)\right)\, dt + \sqrt{2\Sigma\left(Y^\epsilon(t),\eta^\epsilon\left({t}\right)\right)}\,dB(t), \\
d{Y^\epsilon}(t) &=\frac{1}{\epsilon^2}F\left({{Y^\epsilon}(t)}, \eta^\epsilon\left({t}\right)\right)\, dt + \sqrt{\frac{2}{\epsilon^2}\Sigma\left(Y^\epsilon(t),\eta^\epsilon\left({t}\right)\right)}\,dB(t), \\
	d\eta^\epsilon(t) &=\ -\frac{1}{\epsilon}\Gamma \eta^\epsilon(t)dt + \sqrt{\frac{2\Gamma\Pi}{\epsilon}}\,dW(t),
	\end{align}
\end{subequations}
where $F$ and $\Sigma$ are given by (\ref{eq:drift_term}) and (\ref{eq:diffusion_term}) respectively and where we impose periodic boundary conditions on $Y^\epsilon(\cdot)$. The processes $B(\cdot)$ and $W(\cdot)$ are standard $d$ and  $K$-dimensional Brownian motions, respectively.  The infinitesimal generator of the fast process $Y^\epsilon(t)$ is given by $\frac{1}{\epsilon^2}\mathcal{L}_0$,  where $\mathcal{L}_0$ is given by
\begin{equation}
\label{eq:L0}
\mathcal{L}_0 f(y) = \frac{1}{\sqrt{\norm{g}(y,\eta)}}\nabla_{y}\cdot\left(\sqrt{\norm{g}(y,\eta)}g^{-1}(y,\eta)\nabla_y f(y)\right), \quad f \in C^2(\T^d).
\end{equation}
We note that although the spatial and temporal fluctuations appear commensurate in the system of SDEs, the spatial fluctuations relax to equilibrium at a time scale faster than the temporal fluctuations.  The limiting equation can thus be considered the result of a reiterated homogenisation/averaging problem of the form described in \cite[Section 2.11.3]{bensoussan1978asymptotic}.  The limiting equation is thus obtained by  homogenising over $Y^\epsilon(t)$ for a frozen value of $\eta^\epsilon(t)$ and then averaging over the invariant measure $\rho_{\eta}(\cdot)$ of  $\eta^\epsilon(t)$. 
\\\\
For $\eta$ fixed $\mathcal{L}_0$ 
satisfies $$\mathcal{N}[\mathcal{L}_{0}] = \lbrace \mathbf{1} \rbrace\qquad  \mbox{ and } \qquad \mathcal{N}[\mathcal{L}_{0}^*] = \left\lbrace \rho_{y}(y,\eta) \right\rbrace,$$ where $$\rho_{y}(y, \eta) = \frac{\sqrt{\norm{g(y, \eta)}}}{Z(\eta)}$$  for $Z(\eta) = \int_{\mathbb{T}^{2}}\sqrt{\norm{g(y, \eta)}}\, dy.$
\\\\
For $\eta \in \R^K$ fixed, we define the corrector $\chi(y,\eta)$ to be the solution of the following cell equation
\begin{equation}
	\label{eq:case3_celleqn}
	\mathcal{L}_0 \chi(y, \eta) = -F(y, \eta), \qquad (y, \eta) \in \T^d \times \R^K.
\end{equation}

By the Fredholm alternative and elliptic regularity theory there exists a unique, mean zero solution  $\chi \in C^{2\times 2}(\T^d\times \R^K; \R^{d})$ to (\ref{eq:case3_celleqn}).
\\\\
The backward Kolmogorov equation corresponding to the coupled system (\ref{eq:case3_nondim_diffusion_sde}) is given by 
\begin{subequations}
\begin{align}
\label{eq:kbe_case2}
	\frac{\partial v^\epsilon}{\partial t}(x,y, \eta, t) & = \mathcal{L}^\epsilon v^\epsilon(x,y, \eta, t), \qquad (x,y,\eta, t) \in \R^d\times \T^d \times \R^K \times (0,T]	\\
	v^\epsilon(x, y, \eta, 0) &= v(x, y, \eta),\qquad x \in \R^d \times \T^d \times \R^K.
\end{align}
\end{subequations}
where 
\begin{equation}
 \mathcal{L}^\epsilon = \frac{1}{\epsilon^{2}}\mathcal{L}_{0} + \frac{1}{\epsilon}\mathcal{L}_{\eta} + \frac{1}{\epsilon}\mathcal{L}_{1} + \mathcal{L}_{2},
\end{equation}
for 
\begin{equation*}
\mathcal{L}_{1} f(x, y, \eta) = F(y, \eta)\cdot \nabla_{x}f(x,y,\eta) + 2\Sigma(y, \eta):\nabla_{x}\nabla_{y}f(x,y, \eta),
\end{equation*}
and 
\begin{equation*}
\mathcal{L}_{2} f(x, y,\eta) = \Sigma(y, \eta):\nabla_{x}\nabla_{x}f(x,y,\eta),
\end{equation*}
and $\mathcal{L}_\eta$ is the infinitesimal generator of the OU process and is given by
\begin{equation}
\label{eq:L_eta}
\mathcal{L}_{\eta} f(\eta) = -\Gamma\cdot \nabla_\eta f(\eta) + \Gamma\Pi: \nabla_\eta \nabla_\eta f(\eta).
\end{equation}
\\
 The following theorem states the homogenisation result for this scaling.  A formal justification will be given in given in Appendix \ref{sec:case3_app}.  A rigorous proof using probabilistic methods can be found in \cite{duncan2013thesis}.  As in the previous chapters we use the convention that $\left(\nabla_y \chi\right)_{ij} = \frac{\partial \chi^{e_i}}{\partial y_j}$.

\begin{theorem}
\label{thm:homog_case2}
	Let $0 < \epsilon \ll 1$ and $T = \mathcal{O}(1)$, and suppose $\eta^\epsilon(t)$ is stationary.  Then  as $\epsilon \rightarrow 0$, the process $X^{\epsilon}(\cdot)$ converges weakly in $C([0,T]; \R^{d})$ to $X^0$ which is a weak solution of the following It\^{o} SDE 
\begin{equation}
\label{eq:case3_homogenized_sde}
dX^0(t) =  L dt + \sqrt{2D}\,dB(t),
\end{equation}
 where the effective diffusion tensor $D$ is given by
\begin{equation}
\label{eq:case3_eff_diff}
D = 	\int_{\R^K}\int_{\mathbb{T}^{d}}\left(I + \nabla_{y}\chi\right) g^{-1}(y,\eta)\left(I + \nabla_{y}\chi\right)^\top \rho_{y}(y,\eta)\, \rho_\eta(\eta)\, dy \,d\eta,
\end{equation}
and the effective drift term $L$ is given by
\begin{equation}
\label{eq:case3_eff_drift}
L = \int_{\R^K}\int_{\T^d} \mathcal{L}_{\eta}\chi \rho_{y}(y, \eta)\,\rho_{y}(y,\eta)\, \rho_\eta(\eta)\, dy \,\,d\eta.
\end{equation}
Moreover, if the backward equation (\ref{eq:kbe_case2}) has initial data $v$, independent of the fast process, such that $v \in C^2_b(\R^d)$, then the solution $v^{\epsilon}$ of (\ref{eq:kbe_case2}) converges pointwise to the solution $v_{0}$ of
\begin{equation}
\label{eq:homog_eqn_3}
\begin{aligned}
\frac{\partial v^{0}}{\partial t}(x,t)  & =  L\cdot \nabla_x v^0(x,t) + D:\nabla_{x}\nabla_{x}v^{0}(x,t), \qquad (x , t) \in \R^d \times (0, T], \\
v^{0}(x, 0) &= v(x), \qquad x \in \R^d.
\end{aligned}
\end{equation}
uniformly with respect to $t$ over $[0,T]$.
\end{theorem}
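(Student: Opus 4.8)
The plan is to combine a two-scale asymptotic expansion of the backward Kolmogorov equation (\ref{eq:kbe_case2}) with a tightness plus martingale-problem argument, following the reiterated homogenisation/averaging structure noted after (\ref{eq:L0}): since the spatial fast variable $Y^\epsilon$ relaxes on the $\epsilon^{-2}$ time scale while $\eta^\epsilon$ relaxes only on the $\epsilon^{-1}$ scale, one first homogenises over $y$ with the surface configuration $\eta$ held fixed and then averages over the Ornstein--Uhlenbeck invariant density $\rho_\eta$. Concretely I would posit
\[
v^\epsilon(x,y,\eta,t) = v_0(x,y,\eta,t) + \epsilon\, v_1(x,y,\eta,t) + \epsilon^2 v_2(x,y,\eta,t) + \cdots,
\]
with each $v_j$ $1$-periodic in $y$ and of at most polynomial growth in $\eta$, insert it into $\partial_t v^\epsilon = \mathcal{L}^\epsilon v^\epsilon$ using $\mathcal{L}^\epsilon = \epsilon^{-2}\mathcal{L}_0 + \epsilon^{-1}(\mathcal{L}_\eta + \mathcal{L}_1) + \mathcal{L}_2$, and equate powers of $\epsilon$.

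At order $\epsilon^{-2}$ one gets $\mathcal{L}_0 v_0 = 0$, so $v_0 = v_0(x,\eta,t)$ by the characterisation of $\mathcal{N}[\mathcal{L}_0]$. At order $\epsilon^{-1}$, since $v_0$ is $y$-independent one has $\mathcal{L}_1 v_0 = F(y,\eta)\cdot\nabla_x v_0$, and the Fredholm solvability condition — integrate against $\rho_y(y,\eta)$ and use $\int_{\T^d} F(y,\eta)\,\rho_y(y,\eta)\,dy = Z(\eta)^{-1}\int_{\T^d}\nabla\cdot(\sqrt{\norm{g}}\,g^{-1})\,dy = 0$ — collapses to $\mathcal{L}_\eta v_0 = 0$, forcing $v_0 = v_0(x,t)$; one then solves $\mathcal{L}_0 v_1 = -F(y,\eta)\cdot\nabla_x v_0$ to obtain $v_1 = \chi(y,\eta)\cdot\nabla_x v_0 + \bar v_1(x,\eta,t)$ with $\chi$ the corrector (\ref{eq:case3_celleqn}). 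At order $\epsilon^0$ the equation for $v_2$ is solvable iff its right-hand side is orthogonal to $\rho_y\rho_\eta$ on $\T^d\times\R^K$; the $\bar v_1$ contributions drop out (using $\int_{\R^K}\mathcal{L}_\eta f\,\rho_\eta\,d\eta = 0$ and the vanishing $y$-average of $F$), and what remains is precisely (\ref{eq:homog_eqn_3}). The drift $L$ of (\ref{eq:case3_eff_drift}) is produced by the term $\mathcal{L}_\eta(\chi\cdot\nabla_x v_0)$ — it records how the corrector of the frozen-surface problem is dragged along by the temporal fluctuations, so it is not obviously zero — while the effective diffusion tensor first appears in an unsymmetrised form involving $\Sigma$, the products $F\chi$ and $\Sigma\,\nabla_y\chi$; integrating by parts against $\rho_y$ and using $\mathcal{L}_0\chi_i = -F_i$ together with the identity $\int_{\T^d}(\mathcal{L}_0\phi)\psi\,\rho_y\,dy = -Z(\eta)^{-1}\int_{\T^d}\nabla\phi\cdot g^{-1}\nabla\psi\,\sqrt{\norm{g}}\,dy$ recasts it in the symmetric, manifestly positive semidefinite form (\ref{eq:case3_eff_diff}), exactly as in the proof of Proposition \ref{prop:properties}.

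To make this rigorous I would use the perturbed-test-function/martingale method. Given $\varphi \in C_c^\infty(\R^d)$, let $v_1,v_2$ solve the cell problems above with $v_0$ replaced by $\varphi$, set $\phi^\epsilon = \varphi + \epsilon v_1 + \epsilon^2 v_2$, and check that $M^\epsilon_t := \phi^\epsilon(X^\epsilon_t,Y^\epsilon_t,\eta^\epsilon_t) - \phi^\epsilon(X^\epsilon_0,Y^\epsilon_0,\eta^\epsilon_0) - \int_0^t (\mathcal{A}\varphi)(X^\epsilon_s)\,ds$, with $\mathcal{A} = L\cdot\nabla_x + D:\nabla_x\nabla_x$, is a martingale up to an $L^1$ error of size $O(\epsilon)$. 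Tightness of $\{X^\epsilon\}$ in $C([0,T];\R^d)$ follows from an Aldous/Kolmogorov criterion, using the moment bounds of Proposition \ref{prop:well_posedness_sde}, the uniform bound $\Norm{g^{-1}}\le 1$ (so $\Sigma$ is globally bounded), and the Gaussian moments of the stationary OU process to dominate the at-most-polynomial growth of $F$ and of the correctors in $\eta$. Any subsequential weak limit then solves the martingale problem for $\mathcal{A}$, which is well-posed because its coefficients are constant, hence $X^\epsilon \Rightarrow X^0$; the pointwise convergence $v^\epsilon \to v^0$ follows from the backward Kolmogorov (Feynman--Kac) representation. Alternatively one may run the Bensoussan--Lions--Papanicolaou argument directly, writing $X^\epsilon_t = X^\epsilon_0 + \epsilon[\chi(Y^\epsilon_0,\eta^\epsilon_0) - \chi(Y^\epsilon_t,\eta^\epsilon_t)] + (\text{martingale}) + \int_0^t (\mathcal{L}_\eta\chi)(Y^\epsilon_s,\eta^\epsilon_s)\,ds$ and invoking the ergodic theorem and a martingale CLT for the geometrically ergodic joint fast process, whose two-scale invariant measure is $\rho_y(y,\eta)\rho_\eta(\eta)\,dy\,d\eta$.

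The main obstacle is the lack of uniform ellipticity: the eigenvalues of $g^{-1}(\cdot,\eta)$ degenerate like $(1+\norm{\eta}^2)^{-1}$ as $\norm{\eta}\to\infty$, so the Poisson equations defining $\chi$ and $v_2$ have an $\eta$-dependent coercivity constant, and $F$ itself grows polynomially in $\eta$. One must therefore establish quantitative regularity estimates for $\chi(y,\eta)$ — including $\nabla_y\chi$ and, crucially, the second $\eta$-derivatives needed to give meaning to $\mathcal{L}_\eta\chi$ — with growth controlled by a polynomial in $\norm{\eta}$, so that all the $\eta$-integrals defining $L$, $D$ and the expansion error terms converge against the Gaussian $\rho_\eta$, and then propagate this non-uniformity through the tightness estimates and the bound on the martingale remainder. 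Proving this joint $(y,\eta)$-regularity of the corrector, and that its $\eta$-growth is slow enough, is the technical heart of the argument; the remaining steps are routine adaptations of the classical periodic homogenisation and averaging machinery.
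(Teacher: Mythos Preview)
Your proposal is correct and follows essentially the same formal multiscale expansion as the paper's Appendix \ref{sec:case3_app}: the same $\epsilon$-hierarchy for $\mathcal{L}^\epsilon = \epsilon^{-2}\mathcal{L}_0 + \epsilon^{-1}(\mathcal{L}_\eta + \mathcal{L}_1) + \mathcal{L}_2$, the same two-step elimination (first $v_0$ is $y$-independent, then the $\rho_y$-solvability condition forces $\mathcal{L}_\eta v_0 = 0$), the same choice $v_1 = \chi\cdot\nabla_x v_0$, and the same integration-by-parts symmetrisation of $D$ using the cell equation. Your inclusion of the extra gauge freedom $\bar v_1(x,\eta,t)$ and its subsequent elimination via $\int \mathcal{L}_\eta f\,\rho_\eta = 0$ is in fact slightly more careful than the paper, which simply writes ``Averaging with respect to the invariant measure $\rho_\eta$'' without introducing $\bar v_1$; your version makes clear why the second averaging step is a genuine solvability condition rather than an ad hoc operation.

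Where you go beyond the paper is in the rigorous justification: the paper states only that ``A rigorous proof using probabilistic methods can be found in \cite{duncan2013thesis}'' and gives no detail, whereas you outline both the perturbed-test-function/martingale-problem route and the Bensoussan--Lions--Papanicolaou decomposition, and you correctly flag the main technical difficulty --- the non-uniform ellipticity in $\eta$ and the need for polynomial-in-$\norm{\eta}$ bounds on $\chi$, $\nabla_y\chi$ and $\nabla_\eta^2\chi$ so that all integrals against the Gaussian $\rho_\eta$ converge. That diagnosis is accurate and is indeed the heart of the matter for a rigorous proof in this regime.
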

\smartqed \qed

\subsection{Properties of the Effective Diffusion Process}
\label{sec:properties_case2}
Comparing the effective behaviour of the homogenized diffusion processes  in Case I and Case III, we see that the introduction of the fast temporal fluctuations gives rise to a time-averaging of the effective diffusion tensor, so that the effective diffusion in the $(\alpha,\beta)=(1, 1)$ case is equal to $\overline{D}$, the averaged effective diffusion tensor for diffusion on a surface with quenched fluctuations,  as described in Section \ref{sec:caseI}.   Thus all the properties proved for $\overline{D}$ hold equally for the effective diffusion tensor $D$.  The following proposition summarizes the most important properties.

\begin{proposition}
\label{prop:case3_properties}
Let $D$ be the effective diffusion given by (\ref{eq:case3_eff_diff}), then
\begin{enumerate}[(i)]
	\item $D$ is a strictly positive definite matrix.
	\item In particular, for a unit vector $e \in \R^d$
		\begin{equation}
			 	0 < \overline{D}_* \leq e \cdot D e \leq \overline{D}^* \leq 1,
		\end{equation}
		where $\overline{D}^* = \mathbb{E}\left[D^*(h)\right]$ and $\overline{D}_* = \mathbb{E}\left[D_*(h)\right]$, for $D^*$ and $D_*$ given by (\ref{eq:bound_upper}) and (\ref{eq:bound_lower}) respectively, and where $\mathbb{E}\left[\cdot\right]$ denotes expectation with respect to the invariant measure of $\eta^\epsilon(t)$.
	\item For $d = 2$, if the condition of Proposition \ref{prop:isotropy_stat_random} holds, then $D$ is isotropic.
	\item If additionally $\mathbb{E}\left[\norm{\nabla h(x)}^2\right] = \delta \ll 1$, then
		$$
			D = \overline{D}_{as} + O(\delta^2),
		$$
	where $\overline{D}_{as} = \mathbb{E}\left[\frac{1}{Z(h)}\right]$.
\end{enumerate}
\end{proposition}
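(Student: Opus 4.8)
The plan is to reduce all four assertions to the corresponding statements already proved for the quenched model of Section \ref{sec:caseI}, after first showing that the effective diffusion tensor $D$ of (\ref{eq:case3_eff_diff}) is exactly the averaged quenched tensor $\overline{D}$ of (\ref{eq:periodic_D_av}). For this I would observe that, for $\eta$ fixed, $\rho_y(y,\eta) = \sqrt{\norm{g}(y,\eta)}/Z(\eta)$, so the inner integral over $\T^d$ in (\ref{eq:case3_eff_diff}) is precisely the Case I homogenised tensor (\ref{eq:eff_diff_matrix}) for the frozen realisation $h(\cdot,\eta) = \sum_{k\in\mathbb{K}}\eta_k e_k$; indeed the corrector $\chi(\cdot,\eta)$ solving (\ref{eq:case3_celleqn}) for that frozen $\eta$ is the Case I corrector associated with $h(\cdot,\eta)$, and $Z(\eta)$ coincides with $Z(h(\cdot,\eta))$. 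Writing this inner integral as $D(h(\cdot,\eta))$ and integrating against $\rho_\eta(\eta)\,d\eta$, the fact that $\P$ is the pushforward of $\mu_\eta = \mathcal{N}(0,\Pi)$ (see (\ref{eq:mu_infty})) under $P(\zeta)=\zeta\cdot e$ gives $D = \int_{\R^K} D(h(\cdot,\eta))\,\rho_\eta(d\eta) = \int D(h)\,\P(dh) = \overline{D}$.

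With this identification the four claims become simple averages of the pointwise-in-$h$ results of Section \ref{sec:caseI}. For (i) and (ii): Proposition \ref{prop:properties}(iii)--(iv) gives, for every realisation $h$ and every unit vector $e$, the chain $0 < D_*(h) \le e\cdot D(h)e \le D^*(h) \le 1$ with $D_*(h), D^*(h)$ as in (\ref{eq:bound_lower})--(\ref{eq:bound_upper}); since all of these quantities lie in $(0,1]$ and depend measurably on $h$, I can take expectations with respect to $\P$ and preserve the inequalities, obtaining $0 < \overline{D}_* \le e\cdot D e \le \overline{D}^* \le 1$ with $\overline{D}_* = \mathbb{E}[D_*(h)]$ and $\overline{D}^* = \mathbb{E}[D^*(h)]$; strict positivity of $\overline{D}_*$ holds because $D_*(h) > 0$ holds $\P$-almost surely, and strict positive-definiteness of $D$ in (i) follows since $e$ is arbitrary (together with symmetry of $D$, which is inherited from symmetry of each $D(h)$). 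For (iii), since $D = \overline{D}$, the statement is literally Proposition \ref{prop:isotropy_stat_random}. For (iv), under the same invariance hypothesis together with $\delta = \mathbb{E}_\P[\norm{\nabla h}^2] \ll 1$, the estimate $D = \overline{D}_{as} + O(\delta^2)$ is exactly the conclusion of Theorem \ref{thm:as_quenched}.

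The only genuine content beyond bookkeeping is the first paragraph: checking carefully that the $\T^d$-integral in (\ref{eq:case3_eff_diff}) really is $D(h(\cdot,\eta))$ (i.e. that the cell problems coincide) and that the change of variables via the pushforward $P_\#\mu_\eta = \P$ is legitimate. This is where I would be most careful, but I do not expect a real obstacle; everything else is an interchange of the order of $\mathbb{E}_\P[\cdot]$ with inequalities, which is justified by the uniform a.s.\ bound $0 < D_*(h) \le D(h) \le 1$. In short, Proposition \ref{prop:case3_properties} is a corollary of the Case I analysis combined with the structural fact that the Case III homogenisation averages the quenched tensor over the stationary law of the surface.
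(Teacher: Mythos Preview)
Your proposal is correct and follows the same approach as the paper: the paper notes in the paragraph immediately preceding the proposition that the Case III effective diffusion tensor equals the averaged quenched tensor $\overline{D}$ of Section \ref{sec:caseI}, so that all properties proved there carry over, and then states the proposition without further proof. Your argument simply makes this reduction explicit, spelling out the identification $D=\overline{D}$ via the frozen-$\eta$ cell problem and the pushforward $P_\#\mu_\eta=\P$, and then invoking Proposition \ref{prop:properties}, Proposition \ref{prop:isotropy_stat_random}, and Theorem \ref{thm:as_quenched} for parts (i)--(iv) respectively.
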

\qed
$ $\linebreak
We turn our attention to the effective drift term $L$ given by (\ref{eq:case3_eff_diff}).  Unlike $D$, the effective drift depends on $\chi(y, \eta)$ which is only unique up to a constant depending on $\eta$.  However,  for any function $c(\eta)$ we have that 
\begin{equation*}
\begin{split} \int \int_{\T^d} \mathcal{L}_\eta c(\eta) \rho_y(y, \eta)\rho_\eta(\eta)\,dyd\eta &=  \int  \mathcal{L}_\eta c(\eta) \left(\int_{\T^d}\rho_y(y, \eta)\,dy\right) \, \rho_\eta(\eta)\,d\eta \\ &=\int \mathcal{L}_\eta c(\eta)\rho_\eta(\eta)\,d\eta = 0,
\end{split}
\end{equation*}
since $\int_{\T^d}\rho_y(y,\eta)\,dy = 1$ for all $\eta$.  It follows that the effective drift $L$ is uniquely defined independent of any additive terms independent of $y$.  
\\\\
The fact that a macroscopic drift $L$ arises in this scaling regime is surprising.  While numerical simulations suggest that $L$ is always zero,  we have not been able to prove this in general.  However, we can show that it is true for surfaces which satisfy the following  natural symmetry condition.   Suppose there exists a  linear  orthogonal map $\mathcal{C}:\R^K \rightarrow \R^K$ which commutes with $\Pi$ and  $\Gamma$ (in particular $\rho_\eta$ is invariant with respect to $\mathcal{C}$) such that
\begin{equation}
	\label{eq:symmetry_condition_drift}
	h(x, \mathcal{C}^\bot \,\eta) = h(x^\bot, \eta),
\end{equation}
where $x^\bot_i = 1 - x_i$ for $i \in \lbrace 1, \ldots d\rbrace$,  or equivalently that 
\begin{equation*}
	\mathcal{C}e(x) = e(x^\bot),
\end{equation*}
where $e(x) = \lbrace{e_k(x)} \rbrace_{k \in \mathbb{K}}$.
\\\\
Condition (\ref{eq:symmetry_condition_drift}) arises naturally in the case where $e_k$ are the Fourier basis for the Laplacian on $[0,1]^2$.  The surface perturbation $h$ can then be rewritten as 
\begin{equation*}
	h(x, \eta) = \sum_{k \in \mathbb{K}_{even}}\eta^e_k e^e_k(x) +  \sum_{k \in \mathbb{K}_{odd}}\eta^o_k e^o_k(x),
\end{equation*}
where $e_k^e$  and $e_k^o$ are respectively even and odd functions on $[0,1]^2$ for all $k \in \mathbb{K}$.  If $\mathcal{C}$ is the diagonal matrix defined by $$\mathcal{C}^\top\eta = \mathcal{C}^\top(\eta^e, \eta^o) = (\eta^e, -\eta^o),$$ for $\eta^e = (\eta_k^e)_{k \in \mathbb{K}_{even}}$ and $\eta^o = (\eta_k^o)_{k \in \mathbb{K}_{odd}}$, we see that condition (\ref{eq:symmetry_condition_drift}) is trivially satisfied.  We can show the following result.
\\
\begin{proposition}
\label{prop:eff_drift}
	Suppose (\ref{eq:symmetry_condition_drift}) holds, then the effective drift $L$ equals $\mathbf{0}$.
\end{proposition}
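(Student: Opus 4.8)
The plan is to show that the integrand in the formula (\ref{eq:case3_eff_drift}) for $L$ is odd under the combined change of variables $(y,\eta)\mapsto(y^\bot,\mathcal C^\bot\eta)$, which immediately forces $L=-L$. The two facts driving this are that the affine reflection $x\mapsto x^\bot=\mathbf 1-x$ on the unit cell has linear part $-I$, so it produces a sign each time a derivative is taken across it, and that $\mathcal C^\bot$ is an orthogonal map commuting with $\Gamma$ and $\Pi$, so in particular it preserves $\rho_\eta$.

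First I would record how the surface data transform. Differentiating (\ref{eq:symmetry_condition_drift}) in $x$ gives $(\nabla_x h)(x,\mathcal C^\bot\eta)=-(\nabla h)(x^\bot,\eta)$, so from $g=I+\nabla h\otimes\nabla h$ the two minus signs cancel in the tensor product and
$$g(x,\mathcal C^\bot\eta)=g(x^\bot,\eta),\qquad \norm{g}(x,\mathcal C^\bot\eta)=\norm{g}(x^\bot,\eta),\qquad g^{-1}(x,\mathcal C^\bot\eta)=g^{-1}(x^\bot,\eta).$$
Applying $\nabla_x\cdot$ to the matrix field $\sqrt{\norm g}\,g^{-1}$ contributes a further sign, so from (\ref{eq:drift_term})
$$F(x,\mathcal C^\bot\eta)=-F(x^\bot,\eta).$$
The same manipulations give $Z(\mathcal C^\bot\eta)=Z(\eta)$ and hence $\rho_y(y^\bot,\mathcal C^\bot\eta)=\rho_y(y,\eta)$, while $\rho_\eta(\mathcal C^\bot\eta)=\rho_\eta(\eta)$ by assumption.

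Next I would transfer the symmetry to the corrector. Writing $\mathcal L_0^\eta$ for the operator (\ref{eq:L0}) with parameter $\eta$, the two sign changes above show that $y\mapsto-\chi(y^\bot,\eta)$ solves $\mathcal L_0^{\mathcal C^\bot\eta}(\cdot)=-F(\cdot,\mathcal C^\bot\eta)$ and has zero mean over $\T^d$; since the mean-zero solution of the cell equation (\ref{eq:case3_celleqn}) is unique (Fredholm alternative), this gives $\chi(y,\mathcal C^\bot\eta)=-\chi(y^\bot,\eta)$. Applying $\mathcal L_\eta$ from (\ref{eq:L_eta}) in the $\eta$ variable, and using that $\mathcal L_\eta$ commutes with precomposition by the orthogonal map $\mathcal C^\bot$ (this is where the commutation of $\mathcal C^\bot$ with $\Gamma$ and $\Pi$ enters), I obtain
$$(\mathcal L_\eta\chi)(y^\bot,\mathcal C^\bot\eta)=-(\mathcal L_\eta\chi)(y,\eta).$$

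Finally, substituting $(y,\eta)\mapsto(y^\bot,\mathcal C^\bot\eta)$ in (\ref{eq:case3_eff_drift}), a change of variables with Jacobian $1$, the densities $\rho_y$ and $\rho_\eta$ are unchanged while $\mathcal L_\eta\chi$ flips sign, so $L=-L$ and hence $L=\mathbf 0$. I expect the only delicate point to be the careful bookkeeping of the two independent sign changes --- one from differentiating across the $x$-reflection, one from the orthogonal map on $\R^K$ --- and, in particular, verifying that $-\chi(\cdot^\bot,\cdot)$ genuinely solves the reflected cell problem so that the uniqueness argument is legitimate; everything else is a routine change of variables.
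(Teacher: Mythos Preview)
Your proposal is correct and follows essentially the same route as the paper's proof: both establish the transformation rules for $g$, $\norm{g}$ and $F$ under the combined symmetry, use uniqueness of the mean-zero corrector to obtain $\chi(y,\mathcal C^\bot\eta)=-\chi(y^\bot,\eta)$, invoke the commutation of $\mathcal C$ with $\Gamma,\Pi$ to pass $\mathcal L_\eta$ through the symmetry, and then change variables in the integral for $L$ to conclude $L=-L$. The only cosmetic difference is that the paper organises the final change of variables in two separate steps (first in $y$, then in $\eta$ via $\mathcal C^\top$), whereas you do both at once; the content is the same.
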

\begin{proof}
We first note that (\ref{eq:symmetry_condition_drift}) implies that

\begin{equation}
\notag
%\label{eq:symmetry_case1_1} 
g^{-1}(x, \mathcal{C}\eta) = g^{-1}(x^\bot, \eta),
\end{equation} and 
\begin{equation}
\notag
%\label{eq:symmetry_case1_2} 
\norm{g}(x, \mathcal{C}\eta) = \norm{g}(x^\bot, \eta).
\end{equation}
Consider the cell equation for the corrector $\chi^e(y, \eta)$ given by
\begin{equation}
\notag
\begin{split}
	\nabla\cdot \left(\sqrt{\norm{g}(y,\eta)}g^{-1}(y, \eta)\left(\nabla \chi^e(y, \eta) + e \right)\right) = 0.
\end{split}
\end{equation}
Making the substitution $\eta \rightarrow \mathcal{C}\eta$, then using the relations for $g^{-1}$ and $\norm{g}$ and changing variables in $y$ we have
\begin{equation}
\label{eq:case3_corrector_symmetry}
\begin{split}
	-\nabla\cdot \left(\sqrt{\norm{g}(y,\eta)}g^{-1}(y, \eta)\left(-\nabla \tilde{\chi}^e(y, \eta) + e \right)\right)\Bigg|_{y = y^\bot},
\end{split}
\end{equation}
where $\tilde{\chi}^e(y, \eta) = \chi^e(y^\bot, \mathcal{C}\eta)$. It follows that 
\begin{equation}
\label{eq:case3_corrector_sym}
{\chi}^e(y^\bot, \mathcal{C}\eta) = - \chi^e(y, \eta).
\end{equation}
 Applying (\ref{eq:case3_corrector_symmetry}) and using the fact that $\mathcal{C}$ commutes with $\Gamma$ and $\Pi$, we obtain
 \begin{equation}
\notag
	\begin{split}
		-\mathcal{L}_\eta \chi^e(y^\bot, \eta) &= -\Gamma \eta \cdot \mathcal{C}\nabla_\eta \chi^e(y, \mathcal{C}^\top\eta) + \mathcal{C^\top}\Gamma\Pi \mathcal{C}:\nabla \nabla \chi^e(y, \mathcal{C}^\top\eta) \\
		& = -\Gamma \mathcal{C}^\top\eta \cdot \nabla_\eta \chi^e(y, \mathcal{C}^\top\eta) + \Gamma\Pi:\nabla \nabla \chi^e(y, \mathcal{C}^\top\eta)\\ &= \mathcal{L}_\eta \chi^e(y, \mathcal{C}^\top\eta).
	\end{split}
    \end{equation}  
Using the invariance of $\rho_y$ with respect to $\mathcal{C}$ the effective drift term $V$ will then be given by
\begin{equation}
\notag
\begin{split}
 L &= \int \int_{\T^d} \mathcal{L}_\eta \chi^e(y^\bot,\eta)  \rho_y(y^\bot, \eta)\rho_\eta(\eta)\,dy\,d\eta \\
  &=  -\int \int_{\T^d} \mathcal{L}_\eta \chi^e(y,\mathcal{C}^\top\eta)  \rho_y(y, \mathcal{C}^\top\eta)\rho_\eta(\eta)\,dy\,d\eta \\
  &= -L,
 \end{split}
 \end{equation}
proving the result.
\smartqed \qed
\end{proof}

As an example we can consider the model for a thermally excited  Helfrich surface in the $(\alpha, \beta) = (1, 1)$ scaling.   
It follows from Proposition \ref{prop:case3_properties} that the effective diffusion tensor is isotropic.  Moreover, since the conditions of Proposition \ref{prop:eff_drift} hold, the effective drift is $\mathbf{0}$.  It follows that the diffusion process $X^\epsilon(t)$ converges to a Brownian motion on $\R^2$ with a scalar diffusion  tensor $D$.   As the effective diffusion $D$ is equal to the  effective diffusion tensor $\overline{D}$ of Section \ref{sec:case1_helfrich},  the dependence of $D$ on the parameters $\kappa^*$, $\sigma^*$ and $K$ hold equivalently.

\section{Case IV: Diffusion on surfaces with temporal fluctuations faster than spatial fluctuations}
\label{sec:caseIV}
In this section we consider the $(\alpha, \beta) = (1,2)$ scaling.  In this scaling the surface possesses rapid spatial and temporal fluctuations but  the temporal fluctuations much faster than the spatial fluctuations. Writing $Y^\epsilon(t) := \frac{X^\epsilon(t)}{\epsilon} \mod \T^d$ the fast-slow system for this regime is given by:
\begin{subequations}
\label{eq:case4_nondim_diffusion_sde}
\begin{align}
d{X^\epsilon}(t) &=\frac{1}{\epsilon}F\left({{Y^\epsilon}(t)}, \eta^\epsilon\left({t}\right)\right)\, dt + \sqrt{2\Sigma\left(Y^\epsilon(t),\eta^\epsilon\left({t}\right)\right)}\, dB(t), \\
d{Y^\epsilon}(t) &=\frac{1}{\epsilon^{2}} F\left({{Y^\epsilon}(t)}, \eta^\epsilon\left({t}\right)\right)\, dt + \sqrt{\frac{2}{\epsilon^{2}}\Sigma\left(Y^\epsilon(t),\eta^\epsilon\left({t}\right)\right)}\,dB(t), \\
	d\eta^\epsilon(t) &=\ -\frac{1}{\epsilon^2}\Gamma \eta^\epsilon(t) + \sqrt{\frac{2\Gamma\Pi}{\epsilon^2}}\,dW(t),
	\end{align}
\end{subequations}
where $B(\cdot)$ is a standard $d$-dimensional Brownian motion, and $W(\cdot)$ is a standard $K$-dimensional Brownian motion.  The infinitesimal generator of the underlying fast process is given by $\frac{1}{\epsilon^2}\mathcal{G}$ where
 $$\mathcal{G}f(y, \eta) = \left(\mathcal{L}_0 + \mathcal{L}_\eta\right) f(y,\eta), \qquad f \in C^2_c(\T^d \times \R^K),$$
where $\mathcal{L}_0$ and $\mathcal{L}_\eta$ are given by (\ref{eq:L0}) and (\ref{eq:L_eta}) respectively.  
\\\\
Unlike in the previous cases, it is not immediately clear that the fast process is geometrically ergodic, i.e. that the fast process converges exponentially fast to a unique invariant measure.  Moreover, due to the unbounded support of the surface fluctuations, the infinitesimal generator is no longer uniformly elliptic.   Thus we cannot apply standard elliptic theory to obtain a Fredholm alternative for this operator.   In Proposition \ref{prop:case4_inv} we prove the geometric ergodicity of the fast process.  The proof is a straightforward application of the results in \cite{mattingly2002ergodicity,mattingly2002geometric} which are based on the results of the classical Meyn \& Tweedie theory \cite{meyn2009markov}.   In Proposition \ref{prop:case4_inv} we show that there exists a unique, smooth solution of the Poisson problem for this scaling regime,  provided the centering equation holds.  

\subsection{Homogenization Result}
We first identify the fast process $(Y^\epsilon(t), \eta^\epsilon(t))$ as a rescaling of a $\T^d \times \R^K$-valued process independent of $\epsilon$.   Indeed, define  $(Y(t), \eta(t))$ as follows
\begin{subequations}
	\begin{align}
		dY(t) &= F(Y(t), \eta(t))dt + \sqrt{2 \Sigma(Y(t), Z(t))}\, d\hat{B}(t),\\
		d\eta(t) &= -\Gamma \eta(t) + \sqrt{2\Gamma \Pi}d\hat{W}(t),	
	\end{align}
\end{subequations}
where $\hat{B}(t)$ is a standard $\R^d$-valued Brownian motion,  $\hat{W}(t)$ is a standard $\R^K$-valued Brownian motion.  The joint process $(Y(t),\eta(t))$ has infinitesimal generator $\mathcal{G}$. It is straightforward to show that the following equality holds (in law),
\begin{equation}
	\notag
%	\label{eq:case4_rescaled_fast_process}
	\left(Y^\epsilon(t), \eta^\epsilon(t)\right) = \left(Y(t/\epsilon^2), \eta(t/\epsilon^2))\right).
\end{equation}

\begin{proposition}
\label{prop:case4_inv}
	The process $(Y(t), \eta(t))$ possesses a unique invariant measure $\rho$ with smooth, positive density  with respect to the Lebesgue measure on $\T^d \times \R^K$ which is the unique normalizable solution of
\begin{equation}
	\label{eq:density_equation_case4}
	\mathcal{G}^* \rho = 0.
\end{equation}
  Let $P_t$ be the Markov semigroup corresponding to $(Y(t), \eta(t))$.  Then there exists a constant $\mu \in (0,1)$ such that for all functions $f: \T^d\times \R^K \rightarrow \R$, such that \begin{equation}
\label{eq:condition_bound}
|f|(y, \eta) \leq C V(\eta), \qquad (y, \eta) \in \T^d \times \R^K,
\end{equation} where 
\begin{equation}
\label{eq:V}
	V(\eta) := (1 + \norm{\eta}^2)
\end{equation}
 the following estimate holds

\begin{equation}
\label{eq:initial_condition_bound}
\norm{\mathbb{E}^{(y_0, \eta_0)}f(Y(t), \eta(t)) - \int f(y,\eta) \rho(dy, d\eta)} \leq C'V(\eta_0)e^{-\mu t},
\end{equation}
where $\mathbb{E}^{(y_0, \eta_0)}$ denotes expectation conditioned on $(Y(0), \eta(0)) = (y_0, \eta_0) \in \T^d\times \R^K$.   In particular, this implies that

\begin{equation}
	\label{eq:geometric_ergodicity_ineq}
		\Norm{P_tf - \int f(y,\eta) \rho(dy,d\eta)}_{L^2(\rho)} \leq C'' e^{-\mu t},
	\end{equation} 
for some positive constants $C', C''$.
\end{proposition}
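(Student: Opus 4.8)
The plan is to apply the Harris--Meyn--Tweedie criterion for geometric ergodicity \cite{meyn2009markov}, in the form used in \cite{mattingly2002ergodicity,mattingly2002geometric}, whose two ingredients are a Foster--Lyapunov drift condition and a minorization (small-set) condition on the sublevel sets of the Lyapunov function. The key structural fact to exploit is that the $\eta$-component of $(Y(t),\eta(t))$ evolves autonomously as an $\R^K$-valued Ornstein--Uhlenbeck process, so its moments control the excursions of the whole process, while $Y(t)$ lives on the compact torus $\T^d$.

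First I would take the Lyapunov function $V(\eta)=1+\norm{\eta}^2$ from (\ref{eq:V}). Since $V$ is independent of $y$ and $\eta(t)$ is autonomous, $\mathcal{G}V=\mathcal{L}_\eta V$, and a one-line computation gives $\mathcal{G}V(\eta)=-2\,\eta\cdot\Gamma\eta+2\,\mathrm{tr}(\Gamma\Pi)$. Using $\Gamma\ge\gamma_0 I$ with $\gamma_0>0$ this yields the geometric drift bound $\mathcal{G}V\le -aV+b$ with $a=2\gamma_0$ and $b=2\,\mathrm{tr}(\Gamma\Pi)+2\gamma_0$; in particular $\mathbb{E}^{(y_0,\eta_0)}V(\eta(t))$ is bounded, uniformly in $t\ge0$, by a constant multiple of $V(\eta_0)$, and the sublevel sets $C_R:=\T^d\times\{\norm{\eta}\le R\}$ exhaust the state space.

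Next I would establish the minorization. The joint SDE has block-diagonal diffusion matrix $\mathrm{diag}(2\Sigma(y,\eta),2\Gamma\Pi)=\mathrm{diag}(2g^{-1}(y,\eta),2\Gamma\Pi)$, which is positive definite at every point --- although not uniformly, since the smallest eigenvalue of $g^{-1}(y,\eta)$ degenerates as $\norm{\eta}\to\infty$. Consequently $\mathcal{G}$ is an elliptic operator with smooth coefficients, so $(Y(t),\eta(t))$ has a transition density $p_t$ smooth in all its arguments; by the Stroock--Varadhan support theorem --- the associated control system being controllable precisely because of ellipticity --- $p_t((y_0,\eta_0),\cdot)$ is strictly positive on all of $\T^d\times\R^K$. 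Combining this with the a~priori bound from the drift condition, which keeps the process launched from $C_R$ inside a slightly larger compact set up to a fixed time $t_0$ with probability bounded below, one obtains for each $R$ a constant $\delta_R>0$ and a probability measure $\nu_R$ with $P_{t_0}((y_0,\eta_0),\cdot)\ge\delta_R\,\nu_R(\cdot)$ for all $(y_0,\eta_0)\in C_R$, so every $C_R$ is a small set. The drift and minorization conditions are exactly the hypotheses of the geometric ergodicity theorems of \cite{mattingly2002ergodicity,mattingly2002geometric}, giving a unique invariant measure $\rho$, a rate $\mu\in(0,1)$, and the weighted estimate (\ref{eq:initial_condition_bound}) for every $f$ satisfying (\ref{eq:condition_bound}). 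That $\rho$ has a smooth, strictly positive density solving (\ref{eq:density_equation_case4}) follows from $P_t^*\rho=\rho$: differentiating gives $\mathcal{G}^*\rho=0$ in the distributional sense, elliptic regularity for $\mathcal{G}^*$ makes $\rho\in C^\infty$, and positivity is the strong maximum principle. For the $L^2(\rho)$ bound (\ref{eq:geometric_ergodicity_ineq}) I would note that the $\eta$-marginal of $\rho$ is the Ornstein--Uhlenbeck invariant measure $\mathcal{N}(0,\Pi)$ by autonomy, so $\int V^2\,d\rho=\int(1+\norm{\eta}^2)^2\,\mathcal{N}(0,\Pi)(d\eta)<\infty$; applying (\ref{eq:initial_condition_bound}) to a bounded mean-zero $f$ with constant $\Norm{f}_\infty$ (legitimate since $V\ge1$), squaring, and integrating against $\rho$ gives $\Norm{P_tf}_{L^2(\rho)}\le C'\Norm{f}_\infty(\int V^2\,d\rho)^{1/2}e^{-\mu t}$, and the extension to all mean-zero $f\in L^2(\rho)$ follows from density of bounded functions and the $L^2(\rho)$-contractivity of $P_t$ by the interpolation argument of \cite{mattingly2002geometric}, yielding (\ref{eq:geometric_ergodicity_ineq}).

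The hard part is the minorization step: because the generator is only pointwise rather than uniformly elliptic --- the surface fluctuations have unbounded support, so $\Sigma(y,\eta)=g^{-1}(y,\eta)$ degenerates at infinity --- one cannot simply quote uniform Gaussian heat-kernel bounds, and must instead localize the smoothing and positivity estimates and pair them with the Lyapunov control on the excursions of $\eta$. Once the small-set property is in hand, the remainder is a routine instantiation of the Meyn--Tweedie machinery.
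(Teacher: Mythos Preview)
Your proposal is correct and follows exactly the route the paper indicates: the paper does not give a detailed proof of this proposition but states that it ``is a straightforward application of the results in \cite{mattingly2002ergodicity,mattingly2002geometric} which are based on the results of the classical Meyn \& Tweedie theory \cite{meyn2009markov}'', and your argument --- the Foster--Lyapunov drift with $V(\eta)=1+\norm{\eta}^2$ (the same $V$ as in the statement), the small-set minorization on $\T^d\times\{\norm{\eta}\le R\}$ via local ellipticity, and the observation that the $\eta$-marginal of $\rho$ is $\mathcal{N}(0,\Pi)$ so that $V\in L^2(\rho)$ --- is precisely how one instantiates that machinery here. Your remark that the difficulty lies in the minorization step, because $g^{-1}(y,\eta)$ is only pointwise and not uniformly elliptic, is also the point the paper flags when it notes that ``due to the unbounded support of the surface fluctuations, the infinitesimal generator is no longer uniformly elliptic''.
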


The Poisson problem for this scaling regime takes the following form
\begin{equation}
\label{eq:case4_celleqn}
	\mathcal{G}\chi(y, \eta) = -F(y, \eta), \qquad (y, \eta) \in \T^d \times \R^K.	
\end{equation}
\\
The existence of a smooth solution $\chi$ to (\ref{eq:case4_celleqn}) is guaranteed by the following result.
\begin{proposition}
\label{prop:case4_celleqn}
	Suppose the following centering assumption holds
	\begin{equation}
		\label{eq:case4_centering}
		\int_{\T^d \times \R^K} F(y, \eta) \, \rho(dy,d\eta)= 0.
	\end{equation}
	Then there exists a unique, smooth solution $\chi \in D(\mathcal{G})$ such that $$\int_{\T^d \times \R^K} \chi(y, \eta)\,\rho(dy,d\eta) = 0$$ which solves (\ref{eq:case4_celleqn}). The solution $\chi$ satisfies
\begin{equation}
	%\label{eq:case4_corector_pointwise_bound}
	\notag
	\norm{\chi(y,\eta)} \leq C(1 + \norm{\eta}^2),	\qquad (y, \eta) \in \T^d \times \R^K
\end{equation}
where $C > 0$ is a constant independent of $(y, \eta)$.   Moreover, 
\begin{equation}
\begin{split}
\label{eq:case4_corrector_norm_bound}
 \int \nabla_y \chi^\top g^{-1}\nabla_y\chi\,  \rho(dy,d\eta)  & + \int \nabla_\eta \chi(y, \eta)^\top\,\Gamma\Pi \, \nabla_\eta \chi(y, \eta)\,\rho(dy,d\eta)\\  & = -2\int  \chi(y, \eta)\otimes\,\mathcal{G}\chi(y, \eta)\, \rho(dy,d\eta)  < \infty.
\end{split}
\end{equation}
\end{proposition}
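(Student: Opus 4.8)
\medskip
\noindent\textbf{Proof proposal.} The plan is to build $\chi$ probabilistically from the semigroup $P_t$ of the fast process and then upgrade it to a smooth classical solution by interior elliptic estimates, exactly as in the classical probabilistic homogenisation arguments of \cite{bensoussan1978asymptotic,pardoux1999homogenization}, with Proposition \ref{prop:case4_inv} supplying the ergodic decay in place of global uniform ellipticity. First I would check that each component of $F$ in \eqref{eq:drift_term} is bounded on $\T^d\times\R^K$: writing $F$ out in the Monge gauge, it is a ratio of a polynomial in $\nabla h,\nabla\nabla h$ (each entry linear in $\eta$) over $|g|^2=(1+\|\nabla h\|^2)^2$, and a degree count shows numerator and denominator have equal degree, so $\|F\|\le C$. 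In particular $F$ obeys the growth bound \eqref{eq:condition_bound} with $V$ as in \eqref{eq:V}, and by the centering hypothesis \eqref{eq:case4_centering} we have $\int F\,\rho=0$, so \eqref{eq:initial_condition_bound} gives $\|P_tF(y,\eta)\|\le C'V(\eta)e^{-\mu t}$ uniformly in $y$.

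Since $\int_0^\infty\|P_tF(y,\eta)\|\,dt\le C'V(\eta)/\mu$, the function $\chi(y,\eta):=\int_0^\infty P_tF(y,\eta)\,dt$ is well defined, continuous, satisfies $\|\chi(y,\eta)\|\le C(1+\|\eta\|^2)$, and has $\int\chi\,\rho=0$ by Fubini together with $\int P_tF\,\rho=\int F\,\rho=0$ (Fubini being licensed by $\int V\,\rho<\infty$). To see that $\chi$ solves \eqref{eq:case4_celleqn}, I would work with the resolvent $\chi_\lambda:=(\lambda-\mathcal{G})^{-1}F=\int_0^\infty e^{-\lambda t}P_tF\,dt\in D(\mathcal{G})$, which solves $\mathcal{G}\chi_\lambda=\lambda\chi_\lambda-F$. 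The bound above and dominated convergence give $\chi_\lambda\to\chi$ and $\lambda\chi_\lambda\to0$ uniformly on compact subsets of $\T^d\times\R^K$ as $\lambda\downarrow0$, hence $\mathcal{G}\chi_\lambda\to-F$ uniformly on compacts. Since $\mathcal{G}=\mathcal{L}_0+\mathcal{L}_\eta$ is elliptic with smooth coefficients, uniformly elliptic on each compact set, a standard bootstrap with interior $W^{2,p}$ and Schauder estimates bounds $\chi_\lambda$ in $C^{2,\alpha}_{\mathrm{loc}}$; passing to a subsequence, $\chi_\lambda\to\chi$ in $C^2_{\mathrm{loc}}$, so $\mathcal{G}\chi=-F$ classically, and iterating elliptic regularity yields $\chi\in C^\infty$. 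As $\chi$ and $\mathcal{G}\chi=-F$ both satisfy the $V$-growth bound, $\chi\in D(\mathcal{G})$. For uniqueness, if $w$ is the difference of two mean-zero solutions then $\mathcal{G}w=0$ and $\|w\|\le CV$, so $P_tw\equiv w$ and $\int w\,\rho=0$; applying \eqref{eq:initial_condition_bound} to $f=w$ gives $\|w(y,\eta)\|\le C'V(\eta)e^{-\mu t}\to0$, whence $w\equiv0$.

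For the energy identity \eqref{eq:case4_corrector_norm_bound}, I would test the cell equation componentwise against $\chi$ and integrate against $\rho$: using $\mathcal{G}^*\rho=0$ (equation \eqref{eq:density_equation_case4}) and the carr\'e-du-champ identity
\[
  \mathcal{G}(\chi_i\chi_j)=\chi_i\,\mathcal{G}\chi_j+\chi_j\,\mathcal{G}\chi_i+2\bigl(g^{-1}\nabla_y\chi_i\cdot\nabla_y\chi_j+\Gamma\Pi\,\nabla_\eta\chi_i\cdot\nabla_\eta\chi_j\bigr),
\]
integration over $\T^d\times\R^K$ and the substitution $\mathcal{G}\chi=-F$ produce the stated relation between the Dirichlet energies of $\chi$ in $y$ and in $\eta$ and $-\!\int\chi\otimes\mathcal{G}\chi\,\rho=\int\chi\otimes F\,\rho$. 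Finiteness of the right-hand side is immediate from $\|\chi\otimes F\|\le C(1+\|\eta\|^2)$ and $\int(1+\|\eta\|^2)\,\rho<\infty$, the $V$-Lyapunov bound underlying Proposition \ref{prop:case4_inv}.

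\emph{Main obstacle.} The delicate point is justifying this last integration by parts, since $\mathcal{G}$ is not uniformly elliptic and $\rho$ has no closed form, so a priori there could be boundary contributions as $\|\eta\|\to\infty$. I would handle this by a cut-off argument: apply the invariance identity $\int\mathcal{G}(\zeta_R(\eta)\chi_i\chi_j)\,\rho=0$ for a smooth radial cut-off $\zeta_R$, and control the commutator error terms using the quadratic growth of $\chi$, the linear-in-$\eta$ growth of $\nabla h$ and $\nabla\nabla h$, the bound $\|g^{-1}\|\le1$, and the finiteness of all polynomial moments of $\rho$ — the latter obtained, as in the proof of Proposition \ref{prop:case4_inv}, from the Lyapunov functions $1+\|\eta\|^{2p}$. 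The same moment control is what would let one carry out the limit $\lambda\downarrow0$ in $L^2(\rho)$ if a functional-analytic rather than a pointwise argument were preferred.
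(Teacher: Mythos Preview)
The paper does not actually prove this proposition: it is stated and immediately closed with a qed symbol, the argument being deferred to the thesis \cite{duncan2013thesis}. Your overall strategy---build $\chi=\int_0^\infty P_tF\,dt$ from the geometric ergodicity of Proposition~\ref{prop:case4_inv}, upgrade to a classical solution via the resolvent and local elliptic regularity, prove uniqueness from $P_tw\equiv w$ and \eqref{eq:initial_condition_bound}, and derive the energy identity from the carr\'e-du-champ formula together with $\mathcal{G}^*\rho=0$ and a radial cutoff---is the natural one and is essentially correct.

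There is, however, one concrete error in your first step. Your degree count is wrong: $F$ is \emph{not} bounded on $\T^d\times\R^K$. Already for a single mode $h(x,\eta)=\eta\sin(2\pi x_1)$ one has $F_1=-h_{x_1}h_{x_1x_1}/(1+h_{x_1}^2)^2$, and evaluating at $x_1=\tfrac14+(4\pi^2\eta)^{-1}$ gives $|F_1|\sim \pi^2|\eta|$ as $|\eta|\to\infty$. The point is that the denominator $(1+|\nabla h|^2)^2$ is only of order $1$ near the zeros of $\nabla h$, whereas the numerator still carries a second derivative of $h$, which is linear in $\eta$. What \emph{is} true, and is all you need, is the linear bound $\|F(y,\eta)\|\le C(1+\|\eta\|)$: each term in the numerator of $F$ carries exactly one second derivative of $h$ (contributing a factor $\le C\|\eta\|$) together with between one and three factors of $\nabla h$, and $r\mapsto (r+r^3)/(1+r^2)^2$ is bounded. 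Since you only invoke $\|F\|\le CV(\eta)=C(1+\|\eta\|^2)$ to apply \eqref{eq:initial_condition_bound} and to integrate $P_tF$ in $t$, the construction of $\chi$ and the growth bound $\|\chi\|\le C(1+\|\eta\|^2)$ survive unchanged. The only downstream adjustment is that $\|\chi\otimes F\|\le C(1+\|\eta\|^3)$ rather than $C(1+\|\eta\|^2)$, so the finiteness of the right-hand side of \eqref{eq:case4_corrector_norm_bound} requires third (not second) moments of $\rho$ in $\eta$; your remark about the Lyapunov functions $1+\|\eta\|^{2p}$ already covers this.
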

\smartqed \qed
%\begin{proof}
%	This is a direct application of Proposition \ref{case4_poisson} from Appendix \ref{app:B_case4} applied to $b(y, \eta) = F(y,\eta)\cdot e$, for a general unit vector $e \in \R^d$.
%\end{proof}
As before, the backward  Kolmogorov equation corresponding to (\ref{eq:case4_nondim_diffusion_sde}) is given by
\begin{subequations}
\begin{align}
\label{eq:kbe_case4}
	\frac{\partial v^\epsilon}{\partial t}(x, y, \eta, t) & = \mathcal{L}^\epsilon v^\epsilon(x,y,\eta,t ), \qquad (x,y,\eta,t) \in \R^d \times \T^d \times \R^K \times (0,T]\\
	v^\epsilon(x,0) &= v(x),
\end{align}
\end{subequations}
where
\begin{equation}
\mathcal{L}^\epsilon = \frac{1}{\epsilon^{2}}\mathcal{G} + \frac{1}{\epsilon}\mathcal{L}_{1} + \mathcal{L}_{2}
\end{equation}
for
\begin{equation*}
\begin{split}
\mathcal{L}_{1} f(x, y, \eta) = &\frac{1}{\sqrt{\norm{g}(y,\eta)}}\nabla \cdot \left(\sqrt{\norm{g}(y,\eta)}g^{-1}(y,\eta)\right)\cdot \nabla_{x}f(x,y,\eta) \\ &+ 2g^{-1}(y,\eta):\nabla_{x}\nabla_{y}f(x,y, \eta),
\end{split}
\end{equation*}
and 
\begin{equation*}
\begin{split}
\mathcal{L}_{2} f(x, y,\eta) &= g^{-1}(y, \eta):\nabla_{x}\nabla_{x}f(x,y,\eta).
\end{split}
\end{equation*}
\\
We assume that the initial condition $v$ is independent of the fast processes. Having Propositions \ref{prop:case4_inv} and \ref{prop:case4_celleqn} we can state the homogenization result for this regime.  As in the previous cases, we provide a formal derivation based on multiscale expansions in Appendix \ref{sec:case4_app}, and refer interested readers to \cite{duncan2013thesis} for a rigorous proof, based on the central limit theorem for additive functionals of Markov processes \cite{komorowski2012fluctuations}.
\\
\begin{theorem}
	\label{thm:homog_case4}
	Suppose Assumption (\ref{eq:case4_centering}) holds and $\eta^\epsilon(t)$  is stationary.  Then as $\epsilon \rightarrow 0$, the process $X^{\epsilon}(\cdot)$ converges weakly in $C([0,T]; \R^d)$  to a Brownian motion with diffusion tensor $D$ given by
\begin{equation}
	\label{eq:effdiff_case4}
	D = \int \left(I + \nabla_y\chi\right) g^{-1}\left(I + \nabla_y\chi\right)^\top\, \rho(dy, d\eta) + \int  \nabla_\eta \chi\, \Gamma\Pi \nabla_\eta \chi^\top  \, \rho(dy, d\eta).
\end{equation}
Moreover, if the backward equation (\ref{eq:kbe_case4}) has initial data $v$, independent of $\epsilon$  such that $v \in C^2_b(\R^d)$, then the solution $v^{\epsilon}$ of (\ref{eq:kbe_case4}) converges pointwise to the solution $v_{0}$ of
\begin{subequations}
\begin{align}
\frac{\partial v^{0}}{\partial t}(x,t)  & =  D:\nabla_{x}\nabla_{x}v^{0}(x,t), \qquad (x, t) \in \R^d \times (0,T]\\
v^{0}(x, 0) &= v(x), \qquad x \in \R^d,
\end{align}
\end{subequations}
uniformly with respect to $t$ over $[0,T]$.
\end{theorem}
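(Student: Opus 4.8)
The plan is to run the classical probabilistic homogenisation argument, which becomes available because Propositions~\ref{prop:case4_inv} and~\ref{prop:case4_celleqn} supply the ingredients that are automatic in the uniformly elliptic case: geometric ergodicity of the fast process $(Y(t),\eta(t))$ towards the (non-explicit) invariant measure $\rho$, and a solution $\chi$ of the Poisson equation $\mathcal{G}\chi=-F$ with at most quadratic growth in $\eta$. Using the identity in law $(Y^\epsilon(t),\eta^\epsilon(t))=(Y(t/\epsilon^2),\eta(t/\epsilon^2))$ and applying It\^o's formula to $\chi(Y^\epsilon(t),\eta^\epsilon(t))$, the relation $\mathcal{G}\chi=-F$ forces the bounded-variation part of $d\chi(Y^\epsilon,\eta^\epsilon)$ to equal $-\epsilon^{-2}F\,dt$, so that the rescaled system (\ref{eq:case4_nondim_diffusion_sde}) yields
\[
	X^\epsilon(t) + \epsilon\,\chi\bigl(Y^\epsilon(t),\eta^\epsilon(t)\bigr) = X^\epsilon(0) + \epsilon\,\chi\bigl(Y^\epsilon(0),\eta^\epsilon(0)\bigr) + N^\epsilon(t),
\]
where $N^\epsilon$ is the continuous $\mathcal{F}_t$-martingale with differential $\bigl((I+\nabla_y\chi)\sqrt{2g^{-1}}\bigr)\,dB + \bigl(\nabla_\eta\chi\,\sqrt{2\Gamma\Pi}\bigr)\,dW$, the coefficients being evaluated at $(Y^\epsilon(s),\eta^\epsilon(s))$. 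By the growth bound in Proposition~\ref{prop:case4_celleqn} the correction $\epsilon\,\chi(Y^\epsilon(\cdot),\eta^\epsilon(\cdot))$ is $O\bigl(\epsilon(1+\norm{\eta^\epsilon}^2)\bigr)$.

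The second step is the convergence $N^\epsilon\Rightarrow\sqrt{2D}\,B$ in $C([0,T];\R^d)$ via the martingale central limit theorem \cite{komorowski2012fluctuations}. Its quadratic variation,
\[
	\langle N^\epsilon\rangle_t = \int_0^t\Bigl[(I+\nabla_y\chi)\,2g^{-1}(I+\nabla_y\chi)^\top + \nabla_\eta\chi\,2\Gamma\Pi\,\nabla_\eta\chi^\top\Bigr]\bigl(Y^\epsilon(s),\eta^\epsilon(s)\bigr)\,ds = \epsilon^2\int_0^{t/\epsilon^2}\Psi\bigl(Y(u),\eta(u)\bigr)\,du,
\]
has integrand $\Psi$ of at most quadratic growth in $\eta$, hence covered by (\ref{eq:condition_bound}); by the ergodic theorem for $(Y(\cdot),\eta(\cdot))$, justified by the exponential mixing estimates (\ref{eq:initial_condition_bound})--(\ref{eq:geometric_ergodicity_ineq}) together with the stationarity of $\eta^\epsilon$, one obtains $\langle N^\epsilon\rangle_t\to t\int\Psi\,\rho(dy,d\eta)=2Dt$ in probability, with $D$ precisely (\ref{eq:effdiff_case4}). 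Combined with the trivial (continuity) Lindeberg condition this gives $N^\epsilon\Rightarrow\sqrt{2D}B$. Finally $\epsilon\,\chi(Y^\epsilon(t),\eta^\epsilon(t))\to 0$ uniformly on $[0,T]$ in probability, since the running maximum over a window of length $T/\epsilon^2$ of the stationary $\R^K$-valued Ornstein--Uhlenbeck process $\eta$ satisfies $\sup_{u\le T/\epsilon^2}\norm{\eta(u)}^2 = O(\log(1/\epsilon))$, so that $\epsilon\sup_{[0,T]}\norm{\chi(Y^\epsilon,\eta^\epsilon)}\to 0$; hence $X^\epsilon\Rightarrow X^0$. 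For the PDE assertion, $v^\epsilon(x,y,\eta,t)=\mathbb{E}^{x,y,\eta}[v(X^\epsilon(t))]$ since $v\in C^2_b(\R^d)$ does not depend on the fast variables, so weak convergence of $X^\epsilon$ gives the pointwise limit $v^0(x,t)=\mathbb{E}^x[v(X^0(t))]$; uniformity in $t\in[0,T]$ follows from equicontinuity in $t$ of $\{t\mapsto v^\epsilon(x,\cdot,\cdot,t)\}_\epsilon$, a consequence of the uniform-in-$\epsilon$ tightness of $X^\epsilon$ supplied by the decomposition above (Kolmogorov criterion).

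An equivalent, less quantitative route -- the one carried out formally in Appendix~\ref{sec:case4_app} -- is the two-scale expansion $v^\epsilon = v_0 + \epsilon v_1 + \epsilon^2 v_2 + \cdots$ with $v_j = v_j(x,y,\eta,t)$ periodic in $y$. The $O(\epsilon^{-2})$ relation $\mathcal{G}v_0 = 0$ forces $v_0 = v_0(x,t)$ by ergodicity; the $O(\epsilon^{-1})$ relation $\mathcal{G}v_1 = -\mathcal{L}_1 v_0 = -F\cdot\nabla_x v_0$ is solvable because of the centering assumption (\ref{eq:case4_centering}) and Proposition~\ref{prop:case4_celleqn}, with $v_1 = \chi\cdot\nabla_x v_0$; and the Fredholm solvability condition $\int(\mathcal{L}_1 v_1 + \mathcal{L}_2 v_0)\,\rho(dy,d\eta) = \partial_t v_0$ for the $O(1)$ equation is the homogenised PDE. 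Substituting $v_1 = \chi\cdot\nabla_x v_0$, rewriting $\int F\otimes\chi\,\rho(dy,d\eta) = -\int(\mathcal{G}\chi)\otimes\chi\,\rho(dy,d\eta)$ and invoking the Dirichlet-form identity (\ref{eq:case4_corrector_norm_bound}) rearranges the result into $D:\nabla_x\nabla_x v_0$ with $D$ as in (\ref{eq:effdiff_case4}). Note that, unlike in Case~III, there is no intermediate term $\epsilon^{-1}\mathcal{L}_\eta$ between the scales -- it has been absorbed into $\mathcal{G}$ -- so the corrector equation carries no residual contribution and no macroscopic drift survives the solvability condition.

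The structure of the argument is entirely classical; the real difficulty, and the main obstacle, is the bookkeeping forced by the unbounded, non-uniformly-elliptic fast dynamics. Because $\chi$ and $\nabla_\eta\chi$ obey only polynomial-in-$\eta$ bounds, each limit passage -- the vanishing of the boundary term, the ergodic averaging of $\langle N^\epsilon\rangle_t$, and the uniform integrability in the martingale central limit theorem -- must be controlled through the Lyapunov function $V(\eta) = 1 + \norm{\eta}^2$ and the mixing estimate (\ref{eq:initial_condition_bound}), together with the logarithmic growth in $1/\epsilon$ of the running maximum of the stationary Ornstein--Uhlenbeck component. The same absence of an explicit $\rho$ is precisely what, in this regime, obstructs the extraction of variational bounds on $D$ of the kind obtained in Sections~\ref{sec:static} and~\ref{sec:caseIII}.
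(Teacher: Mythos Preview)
Your proposal is correct and follows the same two-pronged approach as the paper: the formal two-scale expansion you summarise is exactly what is carried out in Appendix~\ref{sec:case4_app}, and the probabilistic martingale-CLT argument you sketch first is precisely the rigorous route the paper defers to \cite{duncan2013thesis}, invoking the same ingredients (Propositions~\ref{prop:case4_inv}--\ref{prop:case4_celleqn}, the Dirichlet-form identity (\ref{eq:case4_corrector_norm_bound}), and the CLT for additive functionals of Markov processes from \cite{komorowski2012fluctuations}). One small point: the pointwise quadratic-in-$\eta$ bound you assert for $\Psi$ is not stated in Proposition~\ref{prop:case4_celleqn} (only the $L^2(\rho)$ Dirichlet-form bound is), but $L^1(\rho)$-integrability of $\Psi$---which is all the ergodic theorem needs---follows directly from (\ref{eq:case4_corrector_norm_bound}).
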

\smartqed\qed

$ $\linebreak
Due to the lack of an explicit invariant measure for the fast process it is not clear whether or not the centering condition holds.  Numerical experiments suggest that the centering condition does hold for the surfaces we consider,  however it is not clear that this holds in general.  If the centering condition is not satisfied then one can consider the effective behaviour of the process $X^\epsilon(t)$ close to where the mean flow $Vt$ has taken it, where $$V = \int_{\T^d\times \R^K} F(y,\eta)\,\rho(dy, d\eta).$$   Indeed,  we can show that the process $X^\epsilon(t) - \frac{Vt}{\epsilon}$ converges to a Brownian motion for small $\epsilon$ \cite{pavliotis2007homogenization}. 
\\\\
For surfaces which satisfy the symmetric condition given by (\ref{eq:symmetry_condition_drift}) we are able to show that the centering condition holds.  The proof is very similar to that of Proposition \ref{prop:eff_drift} and is omitted.

\begin{proposition}
\label{prop:case4_centering_condition}
	Assume that condition (\ref{eq:symmetry_condition_drift}) is satisfied, then the centering condition (\ref{eq:case4_centering}) holds.
\end{proposition}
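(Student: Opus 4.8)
The plan is to run the argument of Proposition~\ref{prop:eff_drift} almost verbatim, the only new ingredient being that the invariant measure $\rho$ of the fast process is no longer explicit and so its symmetry must be extracted from the uniqueness statement of Proposition~\ref{prop:case4_inv} rather than written down directly. Introduce the diffeomorphism $\Phi:\T^d\times\R^K\to\T^d\times\R^K$, $\Phi(y,\eta)=(y^\bot,\mathcal{C}\eta)$, where $y^\bot_i=1-y_i$. As in the proof of Proposition~\ref{prop:eff_drift}, the symmetry hypothesis (\ref{eq:symmetry_condition_drift}) gives $(\nabla h)(y^\bot,\mathcal{C}\eta)=-(\nabla h)(y,\eta)$, hence the geometric identities $g^{-1}(y^\bot,\mathcal{C}\eta)=g^{-1}(y,\eta)$ and $\norm{g}(y^\bot,\mathcal{C}\eta)=\norm{g}(y,\eta)$ for all $(y,\eta)$.

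The key step is to show that the generator $\mathcal{G}=\mathcal{L}_0+\mathcal{L}_\eta$ of the fast process is covariant under $\Phi$, i.e. $\mathcal{G}(f\circ\Phi)=(\mathcal{G}f)\circ\Phi$ for $f$ in a suitable domain. For the $\mathcal{L}_\eta$ part (see (\ref{eq:L_eta})) this is immediate from $\mathcal{C}$ being orthogonal and commuting with $\Gamma$ and $\Pi$, so that $\mathcal{C}\Gamma\mathcal{C}^\top=\Gamma$ and $\mathcal{C}\Gamma\Pi\mathcal{C}^\top=\Gamma\Pi$. For the $\mathcal{L}_0$ part (see (\ref{eq:L0})), the reflection $y\mapsto y^\bot$ has Jacobian $-I$, so each application of $\nabla_y$ contributes a sign; since $\mathcal{L}_0$ in divergence form is of the shape $\tfrac{1}{\sqrt{\norm{g}}}\nabla_y\cdot(\sqrt{\norm{g}}\,g^{-1}\nabla_y\,\cdot\,)$, the two signs cancel, and together with the invariance of $g^{-1}$ and $\sqrt{\norm{g}}$ under $\Phi$ this yields $\mathcal{L}_0(f\circ\Phi)=(\mathcal{L}_0f)\circ\Phi$. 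Two further consequences: first, testing the identity $\mathcal{L}_0(f\circ\Phi)=(\mathcal{L}_0f)\circ\Phi$ against functions $f=f(y)$ and comparing the first-order-in-$y$ terms (the second-order terms already agree by the invariance of $g^{-1}$) forces $F(y^\bot,\mathcal{C}\eta)=-F(y,\eta)$ for $F$ given by (\ref{eq:drift_term}) --- this is the analogue here of the relation $\chi^e(y^\bot,\mathcal{C}\eta)=-\chi^e(y,\eta)$ in Proposition~\ref{prop:eff_drift}; second, $\Phi$ preserves the Lyapunov class $V(\eta)=1+\norm{\eta}^2$.

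Next I would deduce $\Phi_*\rho=\rho$. For test functions $f$ of the type allowed in Proposition~\ref{prop:case4_inv} one has $\int\mathcal{G}f\,d(\Phi_*\rho)=\int(\mathcal{G}f)\circ\Phi\,d\rho=\int\mathcal{G}(f\circ\Phi)\,d\rho=0$ by covariance (the change of variables is legitimate because $\Phi$ preserves $V$, so $f\circ\Phi$ lies in the same class), hence $\Phi_*\rho$ is also a normalizable solution of $\mathcal{G}^*\rho=0$; by the uniqueness asserted in Proposition~\ref{prop:case4_inv}, $\Phi_*\rho=\rho$. Combining this with the oddness of $F$ gives \[ \int_{\T^d\times\R^K}F(y,\eta)\,\rho(dy,d\eta)=\int_{\T^d\times\R^K}F(\Phi(y,\eta))\,\rho(dy,d\eta)=-\int_{\T^d\times\R^K}F(y,\eta)\,\rho(dy,d\eta), \] so the centering integral (\ref{eq:case4_centering}) vanishes, which is the claim.

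The main obstacle is precisely the step $\Phi_*\rho=\rho$: in the earlier cases the invariant density is an explicit Gaussian times $\sqrt{\norm{g}}/Z$, so its symmetry is manifest, whereas here $\rho$ is characterised only through (\ref{eq:density_equation_case4}) and one must (i) verify the covariance of $\mathcal{G}$ on a class of test functions large enough to pin down $\rho$, and (ii) invoke the uniqueness and geometric-ergodicity bounds of Proposition~\ref{prop:case4_inv} to conclude. The remaining ingredients --- the geometric identities for $g$ and the oddness of $F$ --- are routine and strictly parallel the computations in the proof of Proposition~\ref{prop:eff_drift}.
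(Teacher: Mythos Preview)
Your proposal is correct and follows exactly the route the paper has in mind: the paper omits the proof entirely, stating only that it is ``very similar to that of Proposition~\ref{prop:eff_drift}'', and you have filled in precisely those details, including the one genuinely new step --- deducing $\Phi_*\rho=\rho$ from the uniqueness of the invariant measure in Proposition~\ref{prop:case4_inv} --- which is needed because $\rho$ is no longer explicit.
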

\smartqed\qed

\subsection{Numerical Experiments} 
\label{eq:timedep_numerics_case4}
Rather than resort to direct numerical simulations of the coupled SDEs,  we instead use a finite element scheme to solve the equations for the invariant measure and the corrector.   The finite element approximation then becomes $K+2$ dimensional problem.   For the sake of tractability, we restrict our interest to when $d=2$ and $K = 1$.  We calculate $D$ numerically as follows:
\begin{enumerate}
	\item We construct a piecewise linear finite element approximation to equation (\ref{eq:density_equation_case4}) on a regular, triangulated mesh of the domain $$ \Omega_M = \lbrace (y_1, y_2, \eta) \in [0,1]\times [0,1]\times [-M, M] \rbrace,$$  where $M$ is chosen so that the support of $\rho$ outside $[-M,M]$ is small.  We impose periodic boundary conditions on the boundaries in the $y_1$ and $y_2$ directions,  and no-flux boundary conditions in the $\eta$ direction.
	\item The solution $\rho$ of (\ref{eq:density_equation_case4}) is then obtained by solving the corresponding generalised eigenvalue problem for the eigenvector corresponding to the zero eigenvalue.   The resulting eigenvector is then normalised over $\Omega_M$ to give an approximation to $\rho$.
	\item The components of the corrector $\chi^{e_1}$ and $\chi^{e_2}$ are then computed by solving the Poisson equation (\ref{eq:case4_celleqn}) using a piecewise linear finite element scheme on the same mesh.  
	\item Finally, the components of the effective diffusion tensor are computed by integrating (\ref{eq:effdiff_case4}) using standard quadrature over $\Omega_M$.
\end{enumerate}

We apply the above steps to compute the effective diffusion tensor for the surface given by $h(x, \eta(t))$ where
	\begin{equation}
		\label{eqn:case4_surface}
		h(x,\eta) = \eta\, \sin(2\pi x)\sin(2\pi y),
	\end{equation}
and $\eta(t)$ is an OU process with SDE
\begin{equation}
		\notag
		d\eta(t) = -\Gamma\eta(t)\,dt + \sqrt{2\Gamma\Pi}W(t),
\end{equation}
where $\Gamma = \frac{\kappa^* \norm{2\pi {k}}^4 + \sigma^* \norm{2\pi k}^2}{\norm{2\pi k}}$,  $\Pi = \frac{1}{\kappa^* \norm{2\pi k}^4 + \sigma^*\norm{2\pi k}^2},$ and where $k = (1,1)^\top$.
\\\\
 In Figure \ref{fig:case3_diff_kappa} we plot the components of $D$ for $\kappa^* \in [10^{-3}, 1.0]$.  We note immediately that the symmetry in $h(x,\eta)$ is sufficient to ensure that $D$ is isotropic.  Moreover,   as in the previous macroscopic limits being considered, $D$ appears to be bounded above by $1$, so that the macroscopic diffusion is depleted with respect to the molecular diffusion tensor.  

\begin{figure}[H]
	\includegraphics[width=\textwidth]{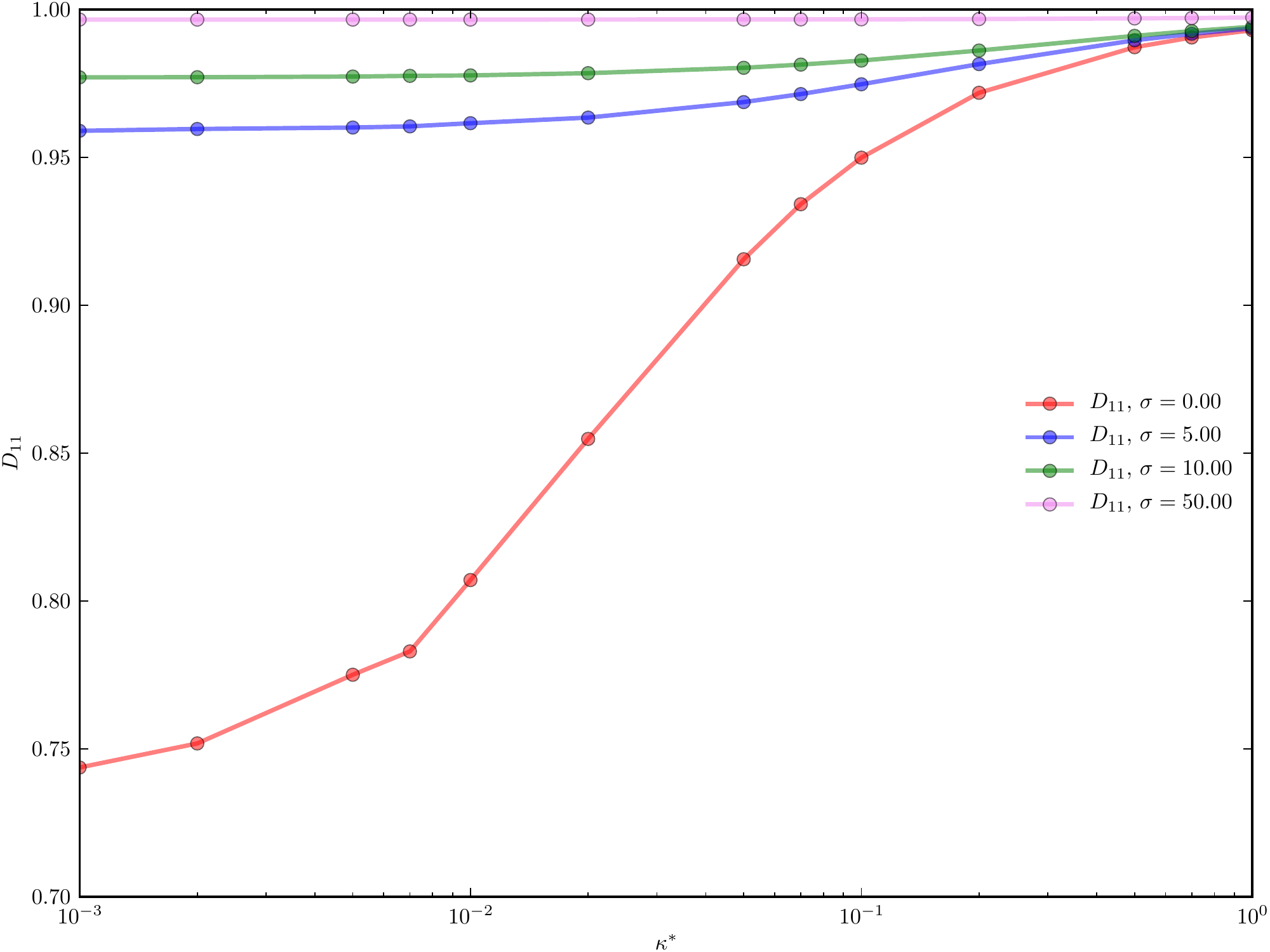}
\caption[Effective diffusion tensor for a simple fluctuating surface model in the Case IV regime]{Plots of the $D_{11}$ component of the effective diffusion tensor for the surface given by (\ref{eqn:case4_surface}) in the $(\alpha, \beta) = (1,2)$ regime.}
\label{fig:case3_diff_kappa}
\end{figure}

\section{Other distinguished limits}
\label{sec:other_scalings}
The four cases considered in this paper are not exhaustive,  and indeed for any $\alpha$ and $\beta$ one can use a similar approach to derive a well defined limit.   While not every choice of $(\alpha, \beta)$ will give rise to a limiting SDE without further assumptions,   in the particular case of lateral diffusion on a Helfrich surface the limiting equations can be described very succintly.  
\\\\
By relabeling $\epsilon^\alpha$ as $\epsilon$, we need only consider the regimes $(\alpha, \beta) = (0,1)$ and  $ (\alpha,\beta) \in \lbrace 1 \rbrace \times [-\infty, \infty)$. If we denote by $D_1(\eta)$, $D_2$, $D_3$ and $D_4$ the effective diffusion tensors  given by (\ref{eq:eff_diff_matrix}), (\ref{eq:effdiff_helfrich}), (\ref{eq:case3_eff_diff}) and (\ref{eq:effdiff_case4}) respectively, corresponding to Case I to IV, then one can show that the process $X^\epsilon(t)$ will converge weakly in $C([0,T]; \R^d)$ to a process $X^0(t)$ as defined in Table \ref{tab:distinguished_limits}.
\\
\begin{table}[h]
\centering
    \begin{tabular}{|c| c   |}
    \hline
    Scaling regime & Macroscopic limit \\
    \hline
   	 $\alpha = 0$, $\beta = 1$, & $X^0(t) = \sqrt{2 D_2}B(t)$\\
	 $\alpha = 1$, $-\infty \leq \beta < 0$, & $X^0(t) = \sqrt{2 D_1(\eta(0))}B(t)$ \\
	 $\alpha = 1$ and $\beta = 0$,  & $X^0(t) = \sqrt{2 D_1(\eta(t))}B(t)$\\
	 $\alpha = 1$ and $0 < \beta < 2$,  & $X^0(t) = \sqrt{2 D_3}B(t)$\\
 $\alpha = 1$ and $\beta = 2$,  & $X^0(t) = \sqrt{2 D_4}(t)$\\
  $\alpha = 1$ and $2 < \beta \leq 3$, & Not determined\\
	 $\alpha = 1$ and $\beta > 3$, &$X^0(t) = \sqrt{2 D_2}B(t)$\\
\hline
    \end{tabular}
  \caption{Distinguished limits for the system $(X^\epsilon(t), \eta^\epsilon(t))$ describing the evolution of a particle diffusion on a Helfrich elastic surface undergoing thermal fluctuations.}
    \label{tab:distinguished_limits}  
\end{table}

The justification of this result can be found in \cite{duncan2013thesis} where a probabilistic approach similar to \cite{garnier1997homogenization} is adopted.  To prove the weak convergence of $X^\epsilon(t)$ to $X^0(t)$ as $\epsilon\rightarrow 0$,  we use It\^{o}'s formula and the solution of an auxiliary PDE to decompose each singular drift into a sum of a martingale and a number of ``remainder" drift terms.  While some of these remainder terms may also be singular,  they are of lower order and this process may be iterated until $X^\epsilon(t)$ has been decomposed into a sum of martingale terms and remainder terms which are $O(1)$ with respect to $\epsilon$.   The result then follows by the martingale central limit theorem.
\\\\
When applying this approach to the $\alpha = 1$, $2 < \beta \leq 3$ regime, it is not clear whether the auxiliary PDEs that arise are well posed.  By the Fredholm alternative,  existence of a solution depends on a solvability condition,  which is not satisfied in general for the fluctuating Helfrich surface.  Thus it does not appear possible to remove the singular drift terms that arise in this regime, so that a well-defined limit will not exist as $\epsilon \rightarrow 0$.
\\\\
As can be seen,  the different effective behaviour can be broadly split into three separate classes depending on the relative speed of the spatial and temporal fluctuations.  For $\beta \leq 0$ the fast fluctuations are contributed entirely by the small scale spatial structure of the surface and no averaging over the fluctuating surface modes occurs. This regime can thus be considered to be a trivial extension of the macroscopic limit derived in Case I.   If the relaxation time of the Fourier modes is comparable with the time scale of the lateral diffusion process then the effective diffusion tensor will depend on the current state $\eta(t)$ of the surface.  If the surface is fluctuating at an much shorter time scale,  then at the $O(1)$ time scale the surface is quenched  and the effective diffusion tensor will depend only on the initial surface configuration.  
\\\\
For $0 < \beta  < 2$,  the OU process will relax to equilibrium sufficiently fast for averaging to occur at $O(1)$  scales.  At an $O(\frac{1}{\epsilon})$ time scale, the process will have homogenized over the spatial fluctuations for a ``frozen" surface configuration.  At the $O(1)$ time scale additional averaging will take place due to temporal fluctuations.  The  effective diffusion tensor $D_3$ will be the spatially homogenized diffusion tensor $D_1(\eta)$ averaged over the invariant measure of $\eta(t)$,  as was described in Case III.
\\\\
For $\beta > 3$ the rapid temporal fluctuations dominate the fast process, and the diffusion process will have been averaged over the surface Fourier modes even at the characteristic time scale of the rapid spatial fluctuations.  Thus over macroscopic time scales the diffusion process is well-approximated by its annealed disorder limit,  as in Case II for $(\alpha, \beta) = (0,1)$.

\section{Conclusion}
\label{sec:conclusion}
We have studied a model for diffusion of particles on a rough, rapidly fluctuating  quasi-planar surface where the surface is periodic in space and with temporal fluctuations modelled by a stationary, ergodic Markovian process.  By considering the coupled system of particle diffusion and surface fluctuations, we identified a natural set of distinguished limits which arise from different relationships between the spatial and temporal fluctuations.  Through the application of multiscale expansions we thus provided a unified approach to studying the macroscopic effects of both rapid spatial and temporal fluctuations on a laterally-diffusing particle.   We identified four natural scaling regimes which possess interesting limiting behaviour and in each case identified which properties of the surface fluctuations have a dominant effect on the macroscopic diffusion tensor.
\\\\
The regimes described in Case I and Case II have previously been considered in \cite{gustafsson1997diffusion} and \cite{naji2007diffusion} to study diffusion on quenched and annealed Helfrich fluctuating membranes respectively.  Using multiscale analysis techniques we rederived the main results in each of these papers and recovered bounds which had been previously obtained heuristically.  After making explicit the fact that these two cases correspond to different scalings of the same model, we then consider other possible scalings limits for this model, namely Case III and Case IV as examples of scalings containing both rapid spatial and temporal fluctuations.  We believe that this work provides a clear unified approach to the problem of lateral diffusion on rapidly varying surfaces, bringing together previously derived results in a single framework which can be analyzed with a common set of methods.
\\\\
While the effective diffusion and drift coefficients do not always have closed forms, we identified natural symmetry assumptions which guarantee that the effective diffusion has an explicit expression depending only on the surface area.  Moreover, we derive bounds of varying degrees of tightness which are satisfied by the effective diffusion tensor for general surfaces. To illustrate the derived results, numerical schemes which compute the effective diffusion tensor using finite elements were implemented.
\\\\
As noted in \cite{naji2007diffusion}, for the fluctuating Helfrich elastic membrane model, the measured diffusion tensor will deviate significantly from the effective diffusion tensor when the small scale parameter $\epsilon$ is not small. In such cases the multiscale approach can still be applicable in these regimes by computing higher order correctors \cite{pavliotis2008multiscale, bensoussan1978asymptotic} which quantify the disparity between the multiscale process and the homogenized process in terms of powers of $\epsilon$.
\\\\
The model presented in this paper can be extended in several directions.  It would be interesting to consider extensions of this problem such as curvature-coupled diffusion (similar to the models presented in \cite{reister2007lateral,leitenberger2008curvature, reister2010diffusing} or diffusion on membranes with non-thermal fluctuations, such as \cite{lin2006nonequilibrium}. Furthermore, it would be interesting to extend the approach to study more general surfaces, possibly even closed surfaces embedded in $\R^3$, which to our knowledge has not been previously considered analytically.

\section*{Acknowledgement}
\label{sec:acknowledgement}
AD is grateful to EPSRC for financial support and thanks the Centre for Scientific Computing @ Warwick for computational resources.  GP acknowledges financial support from EPSRC grant nos. EP/J009636/1 and EP/H034587/1.  AMS is grateful to EPSRC and ERC for financial support.  

\appendix
\section{Formal Multiscale Expansions}
\label{app:A}
In this section we formally derive the limiting equations in each of the scaling regimes using multiscale expansions.   
\subsection{Case I}
\label{sec:case1_app}
To derive the homogenised equation in this regime we make the ansatz that the solution $v^\epsilon$ of (\ref{eq:kbe_ms}) is of the form
\begin{equation}
	\label{eq:v_ansatz}
	v^{\epsilon} = v_{0}(x,y,t) + \epsilon v_{1}(x,y,t) + \epsilon^{2} v_{2}(x,y,t) + \ldots,
\end{equation}
for smooth $v_{i}:\R^{d}\times \T^{d} \times [0,T] \rightarrow \R^d$.  Substituting (\ref{eq:v_ansatz}) in (\ref{eq:kbe_ms}) and identifying equal powers of $\epsilon$ we obtain the following equations
\begin{align}
\centering
	\label{eq:e_min_2}
	&O\left(\frac{1}{\epsilon^2}\right): &\mathcal{L}_{0}v_{0}(x,y,t) = 0,\qquad\\
	\label{eq:e_min_1}
	&O\left(\frac{1}{\epsilon}\right):  &\mathcal{L}_{0}v_{1}(x,y,t) = -\mathcal{L}_{1}v_{0}(x,y,t), \qquad\\
	\label{eq:e_o_1}
	&O\left(1\right): &\mathcal{L}_{0}v_{2}(x,y,t) = \frac{\partial v_{0}}{\partial t} -\mathcal{L}_{1}v_{1}(x,y,t) -\mathcal{L}_{2}v_{0}(x,y,t), \qquad
\end{align}
for $(x,y,t) \in \R^d \times \T^d \times (0,T]$.
\\\\
Since the nullspace of $\mathcal{L}_{0}$  contains only constants in $y$, equation (\ref{eq:e_min_2}) thus implies that $v_{0}$ is a function of $x$ and $t$ only.   Equation (\ref{eq:e_min_1}) becomes
\begin{equation}
	\begin{split}
	\mathcal{L}_{0}v_{1}(x,y,t) &=  -F(y)\cdot\nabla_{x}v_{0}(x,t).
	\end{split}
\end{equation}
Let $\chi \in C^2(\T^d; \R^{d})$ be the unique, mean-zero solution of the cell equation (\ref{eq:celleqn_case1}).  If we choose $v_1 = \chi \cdot \nabla_x v_0(x,t)$ then it is clear that $v_{1}$ solves (\ref{eq:e_min_1}).  
\\\\
Finally, by the Fredholm alternative on $\mathcal{L}_0$,  a neccessary condition for equation (\ref{eq:e_o_1}) to have a solution is that the RHS of (\ref{eq:e_o_1}) has mean zero with respect to the measure $\rho$, that is,

\begin{equation}
	\notag
\frac{\partial v_{0}(x,t)}{\partial t} = \frac{1}{Z}\int_{\T^{d}} \mathcal{L}_{1}v_{1}(x,y,t) \rho(y)\,dy + \frac{1}{Z}\int_{\T^{d}}\mathcal{L}_{2}v_{0}(x,t) \rho(y)\,dy.
\end{equation}
Substituting $v_{0}$ and $v_{1}$ we obtain
\begin{equation}
	\notag
\begin{split}
\frac{\partial v_{0}(x,t)}{\partial t}  & =  \frac{1}{Z}\int_{\T^{d}} \nabla_{y}\cdot\left(\sqrt{\norm{g}(y)}g^{-1}(y)\right)\cdot\nabla_{x}\left(\chi\cdot \nabla_{x}v_{0}(x,t)\right)\, dy  \\ &+ \frac{2}{Z}\int_{\T^{d}}\sqrt{\norm{g}(y)}g^{-1}(y):\nabla_{x}\nabla_{y}\left(\chi\cdot \nabla_{x}v_{0}(x,t)\right) \,dy \\ &+\frac{1}{Z}\int_{\T^{d}}\sqrt{\norm{g}(y)}g^{-1}(y):\nabla_{x}\nabla_{x}v_{0}(x,t)\,dy.
\end{split}
\end{equation}
Integrating the second term by parts with respect to $y$ and simplifying we obtain:
\begin{equation}
%\label{eq:homog_eqn_1}
	\notag
\begin{split}
\frac{\partial v_{0}(x,t)}{\partial t}  &  = \Bigl(\frac{1}{Z}\int_{\T^{d}} g^{-1}(y)\bigl(I + \nabla_{y}\chi(y)\bigl) \sqrt{\norm{g}(y)}\, dy\Bigl):\nabla_{x}\nabla_{x}v_{0}(x,t),\end{split}
\end{equation}
where we have used the symmetry of $g^{-1}$. Thus the homogenized diffusion equation for $v_0$ is 
\begin{equation}
	\label{eq:case1_homogenized_kbe}
	\frac{\partial v_{0}(x,t)}{\partial t}   = D:\nabla_{x}\nabla_{x}v_{0}(x,t),
\end{equation}
where $$D = \frac{1}{Z}\int_{\T^{d}} g^{-1}(y)\bigl(I + \nabla_{y}\chi(y)\bigl) \sqrt{\norm{g}(y)}\, dy.$$
 Multiplying (\ref{eq:celleqn_case1}) by $\chi(y)\rho(y)$ and integrating by parts gives:
\begin{equation}
	\notag
	\int_{\T^{d}}\left(I + \nabla_{y}\chi(y)\right)^{\top} g^{-1}(y)\nabla_{y}\chi(y)\,\sqrt{\norm{g}(y)}\,dy = 0,
\end{equation}
so that the effective diffusion matrix can be written in the following symmetric form
$$D = \frac{1}{Z}\int_{\T^{d}} \bigl(I + \nabla_{y}\chi(y)\bigl)^\top g^{-1}(y)\bigl(I + \nabla_y \chi(y)\bigl) \sqrt{\norm{g}(y)}\, dy.$$

From the limiting Backward Kolmogorov equation (\ref{eq:case1_homogenized_kbe}) we can read off the limiting SDE $dX^0(t) = \sqrt{2D}\,dB(t)$. A rigorous proof of this result can found in \cite{duncan2013thesis}.

\subsection{Case II}
\label{sec:case2_app}
Analogous to the previous case, we look for solutions $v$ of the form
\begin{equation}
	\notag
	v^\epsilon(x ,\eta, t) = v_{0}(x,\eta,t) + \epsilon v_{1}(x,\eta,t) + \ldots,
\end{equation}
for some smooth functions $v_i:\R^d \times \R^K\times [0,T] \rightarrow \R^d$.  Substituting this ansatz in (\ref{eq:kbe_case3}) and identifying equal powers of $\epsilon$ we obtain the following pair of equations:

\begin{align}
\centering
&O\left(\frac{1}{\epsilon}\right): &\mathcal{L}_0 v_0(x, \eta ,t) = 0, \\
&O(1): &\frac{\partial v(x, \eta, t)}{\partial t}  = \mathcal{L}_{0}v_1(x,\eta, t) + \mathcal{L}_{1}v_0(x, \eta, t), 
\end{align}
where $(x, \eta, t) \in \R^d \times \R^K \times (0,T].$
\\\\
The $O(\frac{1}{\epsilon})$ equation immediately implies that $v_0$ is independent of the fast-scale fluctuations.  The second equation then becomes
\begin{equation}
	\notag
\mathcal{L}_0 v_1(x, \eta, t) = \frac{\partial v(x, \eta, t)}{\partial t} - \mathcal{L}_1 v_0(x, \eta, t).
\end{equation}
Applying the Fredholm alternative, a necessary condition for the existence of a solution $v_1$ is that the RHS is orthogonal to the invariant measure $\rho_\eta$,  that is,
\begin{equation}
	\notag
\begin{split}
	\frac{\partial v_0}{\partial t}(x,t) & = \left[\int_{\R^K}F(x,\eta)\rho_\eta(\eta)\right]\cdot \nabla v_0(x,t) + \left[\int_{\R^K}\Sigma(x,\eta)\rho_\eta(\eta)\right]:\nabla \nabla v_0(x,t),
\end{split}
\end{equation}
which is the backward equation for SDE (\ref{eq:case2_homogenized_sde}).

\subsection{Case III}
\label{sec:case3_app}
We make the ansatz that $$v^\epsilon = v_{0} + \epsilon v_{1} + \epsilon^{2} v_{2} + \cdots,$$ for some smooth functions $v_i:\R^d \times \T^d \times \R^K \times [0,T] \rightarrow \R$.   Substituting $v^\epsilon$ in (\ref{eq:kbe_case2}) and identifying equal powers  of $\epsilon$ we obtain the following equations:

\begin{align}
\centering
&O\left(\frac{1}{\epsilon^{2}}\right): &\mathcal{L}_{0}v_{0} = 0, \\
&O\left(\frac{1}{\epsilon}\right):  &\mathcal{L}_{0}v_{1} = -\mathcal{L}_{\eta}v_{0}  -\mathcal{L}_{1}v_{0} ,\\
&O(1): &\mathcal{L}_{0}v_{2}  = -\left(\frac{\partial v_{0} }{\partial t} - \mathcal{L}_{\eta}v_{1}  - \mathcal{L}_{1}v_{1}  - \mathcal{L}_{2}v_{0}\right).
\end{align}

The first equation implies that $v_{0} \in \mathcal{N}[\mathcal{L}_{0}]$ so that $v_{0}$ is a constant in $y$. The second equation thus becomes

\begin{equation}
	\notag
\mathcal{L}_{0}v_1(x,y, \eta, t) = \left(\mathcal{L}_{\eta}v_{0}(x, \eta, t) + F(y,\eta)\nabla_{x}v_{0}(x, \eta, t)\right).
\end{equation}

By the Fredholm alternative applied to $\mathcal{L}_{0}$ we require that the RHS is centered with respect to $\sqrt{\norm{g}}$, for each fixed $x$ and $\eta$ that is,
\begin{equation}
	\notag
	\int_{\mathbb{T}^{d}}\left(F(y,\eta)\nabla_{x}v_{0}(x,\eta, t) + \mathcal{L}_{\eta}v_{0}(x, \eta, t) \right) \sqrt{\norm{g}(y,\eta)}\,dy = 0.
\end{equation}
The first term in the above integral is clearly $0$.  Since $\mathcal{L}_{\eta}v_{0}$ is independent of $y$, the centering condition becomes
\begin{equation}
	\notag
	Z(\eta)\mathcal{L}_{\eta}v_{0}(x, \eta, t)  = 0.
\end{equation}
Since $Z > 1$, it follows that $v_{0} \in \mathcal{N}[\mathcal{L}_{\eta}]$ is a sufficient condition for the centering condition to hold, which we therefore will assume.   By ergodicity of the Ornstein Uhlenbeck process $\eta(t)$ over $\R^{K}$ it follows that $v_{0}$ is also independent of $\eta$ so that $v_{0}$ is a function of $x$ only.  The second equation thus becomes
\begin{equation}
	\notag
\mathcal{L}_{0}v_1 = F(y,\eta)\cdot\nabla_{x}v_{0}.
\end{equation}
 Let $\chi(\cdot, \eta)$ be the unique, mean-zero solution of the cell equation solution by the Fredholm alternative, since the centering condition holds. Choosing $v_{1} = \chi \cdot\nabla_{x}v_{0}$ it is clear that $v_1$ solves the $O(\frac{1}{\epsilon})$ equation.
\\\\
We now consider the $O(1)$ equation.  By the Fredholm alternative,  a necessary condition for the existence of a unique solution $v_{2}$ is that the RHS is centered with respect to the invariant measure of $\mathcal{L}_{0}$.  That is,
\begin{equation*}
\frac{\partial v_{0}}{\partial t} =\int_{\mathbb{T}^{d}}\left(\mathcal{L}_{\eta}v_{1} + \mathcal{L}_{1}v_{1} + \mathcal{L}_{2}v_{0}\right)\rho_{y}\, dy,
\end{equation*}
which, substituting the definitions of the $\mathcal{L}_{i}$'s and  $v_{j}$'s, can be written as follows
\begin{subequations}
\label{eq:homog1}
\begin{align}
\frac{\partial v_{0}}{\partial t} & = \int_{\mathbb{T}^{d}} F\otimes \chi\, \rho_{y}\,dy: \nabla_{x}\nabla_{x}v_{0} \\  
&+ \int_{\mathbb{T}^{d}} g^{-1}\nabla_{y}\chi\rho_{y} + \nabla_{y}\chi\, g^{-1}\rho_{y}\,dy: \nabla_{x}\nabla_{x}v_{0} \\ &+  \int_{\mathbb{T}^{d}} g^{-1}\rho_{y}\,dy:\nabla_{x}\nabla_{x}v_{0} \\ &+ \int_{\mathbb{T}^{d}} \mathcal{L}_{\eta}\chi\rho_{y}\,dy\, \nabla_{x}v_{0}.
\end{align}
\end{subequations}

First we note that 
\begin{equation*}
\begin{split}
\int_{\mathbb{T}^{d}} F(y,\eta)\otimes \chi(y, \eta) \sqrt{\norm{g}(y, \eta)}\,dy & = -\int_{\mathbb{T}^{d}} \mathcal{L}_{0}\chi(y, \eta)\otimes \chi(y, \eta) \sqrt{\norm{g}(y, \eta)}\,dy \\ &= \int_{\mathbb{T}^{d}}\nabla_{y}\chi(y, \eta)^\top g^{-1}(y, \eta) \nabla_{y}\chi(y, \eta)\sqrt{\norm{g}(y,\eta)}\, dy,
\end{split}
\end{equation*}
so that we can write (\ref{eq:homog1}) as 
\begin{equation}
	\notag
\begin{split}
	\frac{\partial v_{0}}{\partial t} = 	\int_{\mathbb{T}^{d}}\left(I + \nabla_{y}\chi\right)^\top g^{-1}\left(I + \nabla_{y}\chi\right)\rho_{y}\, dy: \nabla_{x}\nabla_{x}v_{0} + \int_{\mathbb{T}^{d}} \mathcal{L}_{\eta}\chi\rho_{y}\,dy \cdot \nabla_{x}v_{0}.
\end{split}
\end{equation}
\\
Averaging with respect to the invariant measure $\rho_{\eta}$ of $\mathcal{L}_{1}$ we derive the effective diffusion equation
\begin{equation}
	\notag
\begin{split}
	\frac{\partial v_{0}}{\partial t} = 	\int\int_{\mathbb{T}^{d}}\left(I + \nabla_{y}\chi\right)^\top g^{-1}\left(I + \nabla_{y}\chi\right) \rho_{y}\,  \rho_{\eta}(h)\, dy\,d\eta: \nabla_{x}\nabla_{x}v_{0} + \int\int_{\mathbb{T}^{d}} \mathcal{L}_{\eta}\chi\rho_{y}\rho_{\eta}\,dy\, d\eta\cdot\nabla_{x}v_{0},
\end{split}
\end{equation}
or more compactly:
\begin{equation}
	\notag
\begin{split}
	\frac{\partial v_{0}}{\partial t} = 	 D: \nabla_{x}\nabla_{x}v_{0}  + L\cdot\nabla_{x}v_{0},
\end{split}
\end{equation}
where $D$ and $L$ are given by (\ref{eq:case3_eff_diff}) and (\ref{eq:case3_eff_drift}) respectively.   From the limiting backward Kolmogorov equation we can read off the limiting SDE (\ref{eq:case3_homogenized_sde}) for the process $X^{\epsilon}(t)$.
\\\\
\subsection{Case IV}
\label{sec:case4_app}
We look for solutions $v$ of the form $$v^\epsilon = v_0 + \epsilon v_1 + \epsilon^2 v_2 + \ldots$$  of (\ref{eq:kbe_case4}) for some smooth functions $v_i:\R^d \times \T^d \times \R^K \times [0,T]\rightarrow \R$.   Substituting this ansatz in (\ref{eq:kbe_case4}) and equating equal powers of $\epsilon$ we obtain the following 3 equations
\begin{align}
	O\left(\frac{1}{\epsilon^{2}}\right)&: &\mathcal{G}v_{0} = 0,\\
	O\left(\frac{1}{\epsilon}\right)&: &\mathcal{G}v_{1} = -\mathcal{L}_{\eta}v_{0} -\mathcal{L}_{1}v_{0},\\
	O(1)&: &\mathcal{G}v_{2} = -\left(\frac{\partial v_{0}}{\partial t} - \mathcal{L}_{1}v_{1} - \mathcal{L}_{2}v_{0}\right).
\end{align}

As the fast process is ergodic,  the first equation implies that $v_0$ is independent of $y$ and $\eta$.  The second equation thus becomes
\begin{equation}
	\notag
	\mathcal{G}v_1 = -F(y, \eta)\cdot \nabla_x v_0.
\end{equation}
Since we are assuming Assumption (\ref{eq:case4_centering}),  there exists a unique solution of the Poisson problem (\ref{eq:case4_celleqn}), by  Proposition \ref{prop:case4_celleqn}.  By choosing $v_1 = \chi \cdot \nabla_x v_0$ we see that the second equation is satisfied.
\\\\
By Proposition \ref{prop:case4_celleqn}, a sufficient condition for the final equation to have a solution is that the RHS is orthogonal to the measure $\rho(dy, d\eta)$, (assuming that the RHS grows at most polynomially). That is,
\begin{equation}
	\notag
\begin{split}
	\frac{\partial v_0}{\partial t}(y, \eta)  = \int F(y, \eta)\cdot\nabla_x v_1 \, \rho(dy, d\eta)  &+ \int 2\Sigma(y, \eta):\nabla_x \nabla_y v_1 \, \rho(dy, d\eta) \\ &+ \int \Sigma(y,\eta):\nabla_x\nabla_x v_0 \, \rho(dy, d\eta),
\end{split}
\end{equation}  
which we can rewrite as
\begin{equation}
	\notag
\begin{split}
	\frac{\partial v_0}{\partial t}  &= D:\nabla_x\nabla_x v_0,
\end{split}
\end{equation}  
where the effective diffusion tensor $D$ is given by
\begin{equation}
	\notag
D = \int \left[\frac{1}{\sqrt{\norm{g}}}\nabla_y\cdot\left(\sqrt{\norm{g}}g^{-1}\right)\otimes \chi + g^{-1}\nabla_y \chi^\top  +  \nabla_y \chi g^{-1} +  g^{-1}\, \right] \rho(dy, d\eta)
\end{equation}
Note that the first term on the RHS $$\displaystyle\int \frac{1}{\sqrt{\norm{g}}}\nabla_y\cdot\left(\sqrt{\norm{g}}g^{-1}\right)\otimes \chi\, \rho(dy, d\eta):\nabla_x \nabla_x v_0,$$  
can be rewritten as $\mathcal{K}:\nabla_x \nabla_x v_0,$ where $$\mathcal{K} = \mbox{Sym}\left[\displaystyle\int \frac{1}{\sqrt{\norm{g}}}\nabla_y\cdot\left(\sqrt{\norm{g}}g^{-1}\right)\otimes \chi\, \rho(dy, d\eta)\right],$$
where $\mbox{Sym}\left[\cdot\right]$ denotes the symmetric part of the matrix.  Let $e \in \R^d$ be a unit vector and consider $$\mathcal{K}^e := e \cdot \mathcal{K} e = \displaystyle\int \frac{1}{\sqrt{\norm{g}}}\nabla_y\cdot\left(\sqrt{\norm{g}}g^{-1}e\right) \chi^e\, \rho(dy, d\eta),$$
where $\chi^e = \chi \cdot e$.   Noting that $$-\mathcal{G}\chi^e = \frac{1}{\sqrt{\norm{g}}}\nabla_y\cdot\left(\sqrt{\norm{g}}g^{-1}e\right),$$
it follows that
$$\mathcal{K}^e = \int \chi^e\left(-\mathcal{G}\chi^e\right)  \, \rho(dy, d\eta),$$
which, by (\ref{eq:case4_corrector_norm_bound}) can be written as 
\begin{equation*}
	\mathcal{K}^e = \int \nabla_y \chi^e\cdot g^{-1} \nabla_y \chi^e \,\rho(dy, d\eta) + \int \nabla_\eta \chi^e \cdot \Gamma\Pi \nabla_\eta \chi^e \, \rho(dy, d\eta),
\end{equation*}
so that 
\begin{equation}
	\notag
	e\cdot D e = \int \left(e + \nabla_y\chi^e\right)\cdot g^{-1}\left(e + \nabla_y\chi^e\right)\, \rho(dy, d\eta) + \int \nabla_\eta \chi^e \cdot \Gamma\Pi \nabla_\eta \chi^e \,\rho(dy, d\eta), 
\end{equation}
or in matrix notation
\begin{equation}
	\notag
	D = \int \left(I + \nabla_y\chi\right) g^{-1}\left(I + \nabla_y\chi\right)^\top\, \rho(dy, d\eta) + \int  \nabla_\eta \chi\, \Gamma\Pi \nabla_\eta \chi^\top  \,\rho(dy, d\eta), 
\end{equation}
as required.  A rigorous proof of this result can be found in \cite{duncan2013thesis}.

\bibliographystyle{plain}
\bibliography{refs}
\end{document}